\titleformat{\section}[block]  {\fontsize{12}{15}\filcenter}{\thesection.}{1em}{\MakeUppercase}
\titleformat{\subsection}[runin]{\fontsize{12}{15}\bfseries}{\thesubsection.}{0.5em}{}
\titleformat{\subsubsection}[runin]{\fontsize{11}{15}\bfseries}{\thesubsubsection.}{0.5em}{}
\title[]{\bf \large ON THE MALLE CONJECTURE \\ AND THE GRUNWALD PROBLEM} 
\author[]{François Motte}     
\date{\today}
\newcommand{\Q}{\mathbb{Q}}
\newcommand{\Z}{\mathbb{Z}}
\newcommand{\N}{\mathbb{N}}
\newcommand{\R}{\mathbb{R}}
\newcommand{\F}{\mathbb{F}}
\newcommand{\PP}{\mathbb{P}^1}
\newcommand{\ip}{\mathfrak{p}}
\newcommand{\dd}{\rho}
\newcommand{\frobp}{\mathcal{F}_{\ip}}
\newcommand{\frob}{\mathcal{F}}
\newcommand*{\house}[1]{%
  \mathord{%
    \mathpalette\@house{#1}%
  }%
}
\newcommand*{\@house}[2]{%
  % #1: math style
  % #2: expression that gets the "house"
  % get the line width of `\overline' in the current math font size
  \dimen@=\fontdimen8 %
      \ifx#1\scriptscriptstyle\scriptscriptfont
      \else\ifx#1\scriptstyle\scriptfont
      \else\textfont\fi\fi
      3 %
  \sbox0{%
    $#1%
      \vrule width\dimen@\relax
      \overline{%
        \kern2\dimen@
        \begingroup % to keep changes of \dimen@ in #2 local
          #2%
        \endgroup
        \kern2\dimen@
      }%
      \vrule width\dimen@\relax
      \mathsurround=1.5\dimen@ % outside margin
    $%
  }%
  % TeX adds an empty space above `\overline', it needs to be
  % removed to get the correct height for the `\vrule's
  \ht0=\dimexpr\ht0-\dimen@\relax
  \dp0=\dimexpr\dp0+2\dimen@\relax
  \vbox{%
    \kern\dimen@ % reinsert previously removed space
    \copy0 %
  }%
}
\theoremstyle{plain}
\newtheorem{theorem}{Theorem}[section]
\newtheorem{prop}[theorem]{Proposition}
\newtheorem{conj}[theorem]{Conjecture}
\newtheorem{lemme}[theorem]{Lemma}
\theoremstyle{definition}
\newtheorem{defn}[theorem]{Definition}
\newtheorem{rmq}[theorem]{Remark}
\keywords{Galois extensions, Inverse Galois theory, Malle conjecture, Grunwald problem, Algebraic covers, Specialization, Diophantine geometry}
\email{francois.motte@univ-lille.fr}
\address{Laboratoire Paul Painlev\'e, Math\'ematiques, Universit\'e Lille, 59655 Villeneuve d'Ascq Cedex, France}
\begin{document}

\subjclass[2010]{Primary 12E25, 12F12, 11R58, 11R44,  ; Secondary 14Gxx, 11Rxx, 12Fxx}

\selectlanguage{english}

\maketitle                        

\begin{abstract}
We contribute to the Malle conjecture on the number $N(K,G,y)$ of finite Galois extensions $E$ of some number field $K$ of finite group $G$ and of discriminant of norm $|N_{K/\Q}(d_E)|\leq y$. We prove the lower bound part of the conjecture for every group $G$ and every number field $K$ containing a certain number field $K_0$ depending on $G$ : $N(K,G,y) \geq y^{\alpha (G)}$ for $y\gg 1$ and some specific exponent $\alpha (G)$ depending on $G$. To achieve this goal, we start from a regular Galois extension $F/K(T)$ that we specialize. We prove a strong version of the Hilbert Irreducibility Theorem   which counts the number of specialized extensions $F_{t_0}/K$ and not only the specialization points $t_0$, and which provides some control of $|N_{K/\Q}(d_{F_{t_0}})|$. We can also prescribe the local behaviour of the specialized extensions at some primes. Consequently, we deduce new results on the local-global Grunwald problem, in particular for some non-solvable groups $G$.
\end{abstract}

\section{Introduction}

\subsection{The Malle conjecture.} In inverse Galois theory, the Malle conjecture is about the number, say $N(K,G,y)$, with $K$ a number field, $G$ a finite group and $y$ a positive number, of Galois extensions $E/K$ (in a fixed algebraic closure $\overline{K}$ of $K$), with Galois group $G$ and with discriminant $d_{E/K}$ of norm $N_K(d_{E/K})$ bounded by $y$. It is well-known that this number is finite. There is the following conjecture \cite{malledistrib1} :

\begin{conj}
There exists a constant $a(G)>0$, depending only on $G$, such that for every $\varepsilon>0$, we have
$$c_1 y^{a(G)} \leq N(K,G,y) < c_2 y^{a(G)+\varepsilon} \text{  for all } y\geq y_0$$
for positive constants $c_1$ (depending on $G$, $K$) and $c_2$, $y_0$ (depending on $G$, $K$, $\varepsilon$).
\end{conj}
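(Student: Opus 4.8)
\emph{Strategy.} I would prove the two inequalities by entirely different means, in each case aiming at Malle's exponent, which in a faithful transitive permutation representation $G\hookrightarrow S_n$ is $a(G)=1/\min_{1\neq g\in G}\mathrm{ind}(g)$, where $\mathrm{ind}(g)=n-(\text{number of orbits of }g)$. The lower bound is the part within reach of the geometric method of this paper; the upper bound is the real difficulty and, in this generality, is open.

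\emph{Lower bound, $N(K,G,y)\geq c_1 y^{a(G)}$.} Begin with a regular Galois extension $F/K(T)$ of group $G$ and specialize $T$ at the rational points $t_0\in\mathbb{P}^1(K)$. By Hilbert's irreducibility theorem, for all $t_0$ outside a thin subset of $\mathbb{P}^1(K)$ the specialization $F_{t_0}/K$ is a field, Galois of group $G$. The quantitative core is an estimate $|N_{K/\Q}(d_{F_{t_0}})|\ll H(t_0)^{c}$, with $c$ depending only on the branch divisor of $F/K(T)$ and the inertia at the branch points: off the branch locus $F_{t_0}/K$ is unramified, and at the finitely many primes where $t_0$ hits a branch point or has bad reduction the contribution to $d_{F_{t_0}}$ is governed by the corresponding inertia generators, whose indices determine $c$. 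Counting $t_0$ of height $\leq B$ then yields $\gg B^{[K:\Q]}$ admissible specialization points with discriminant norm $\leq c'B^{c}$. One must next pass from points to fields, since many $t_0$ can give the same $F_{t_0}$; this is exactly what the strong form of Hilbert's irreducibility theorem proved below supplies — it counts the specialized \emph{extensions}, not merely the specialization points, and simultaneously tracks $|N_{K/\Q}(d_{F_{t_0}})|$. Feeding this in gives $N(K,G,y)\geq c_1 y^{[K:\Q]/c}$ for the chosen $F$; to reach the exponent $a(G)$ one optimizes the construction of $F/K(T)$, concentrating the ramification at elements of minimal index so as to make $c$ as small as the structure of $G$ permits — and establishing that the optimal $c$ gives precisely $[K:\Q]/c=a(G)$ is itself a substantial part of the work.

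\emph{Upper bound, $N(K,G,y)< c_2 y^{a(G)+\varepsilon}$, and the main obstacle.} Here one would embed $G\hookrightarrow S_n$, parametrize degree-$n$ étale $K$-algebras carrying a $G$-action by integral points of bounded height in a suitable space of resolvent (or ``shape'') forms, and count those of bounded discriminant by geometry of numbers. This is the crux and where the plan stalls: no uniform such parametrization is known. The exact upper bound has been obtained only for a short list of groups — abelian groups, $S_3$, $D_4$ and generalized dihedral groups, $S_4$, $S_5$, and a few more — each by an ad hoc construction, while the best uniform bound, Schmidt's $N(K,G,y)\ll_{n,\varepsilon}y^{(n+2)/4+\varepsilon}$ (refined by Ellenberg--Venkatesh), is far from $y^{a(G)+\varepsilon}$. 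Consequently a complete proof of the displayed two-sided estimate for arbitrary $G$ and $K$ is beyond present techniques. What the specialization circle of ideas above does deliver unconditionally is the lower half of the conjecture, and even that requires a regular Galois extension of $G$ over $K(T)$ with good behaviour at the relevant primes; securing this is why the theorems of this paper take $K$ to contain a fixed number field $K_0=K_0(G)$. Within that range the lower bound — together with the extra control on $|N_{K/\Q}(d_{F_{t_0}})|$ and on local behaviour at prescribed primes coming from the strong Hilbert irreducibility theorem — is what the rest of the paper proves; the upper bound remains conjectural.
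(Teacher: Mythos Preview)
The statement you are discussing is the Malle \emph{conjecture}; the paper does not prove it and offers no proof to compare against. What the paper establishes is only the weaker Theorem~A: a lower bound $N(K,G,y)\geq c_1 y^{\alpha(G)}$ with an exponent $\alpha(G)$ that is explicitly \emph{smaller} than Malle's $a(G)$ (see Remark~2.3 and \S1.1). You correctly identify that the upper bound is out of reach, so on that side there is nothing to object to.

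There is, however, a genuine gap in your treatment of the lower bound. You assert that by ``optimizing the construction of $F/K(T)$, concentrating the ramification at elements of minimal index'' one can push the specialization exponent all the way up to $a(G)$. The paper does not do this, and in fact explains why the method falls short: the extensions counted are specializations of a \emph{single} regular extension $F/K(T)$, and there is no reason every Galois extension of $K$ with group $G$ and small discriminant should arise this way (Remark~2.3). Concretely, the paper bounds $|N_{K/\Q}(d_{F_{t_0}})|$ via the crude inequality $d_{F_{t_0}/K}\mid \Delta_P(t_0)$ for an affine model $P$ (Proposition~2.4), which yields the exponent $\alpha(G)=(1-1/|G|)/\delta$ with $\delta>\deg_T\Delta_P$; no optimization over covers is performed, and none is known that reaches $a(G)$. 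Your claimed step ``establishing that the optimal $c$ gives precisely $[K:\Q]/c=a(G)$'' is therefore not a routine calculation but an open problem in its own right. The honest conclusion is that the specialization method, as developed here, proves only the existence of \emph{some} positive exponent $\alpha(G)$ (Definition~1.2), not Malle's predicted one.
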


This conjecture is open for most groups $G$ over any number field $K$. Malle proved it over $\Q$ for abelian groups \cite{malledistrib1}, \cite{malledistrib2}, Klüners and Malle proved it (also over $\Q$) for nilpotent groups $G$ using the Shafarevich result on the existence of at least one extension of group $G$ \cite{mallenilp}. Klüners also proved the lower bound part for dihedral groups of order $2p$ where $p$ is an odd prime \cite{klunersdiedral}. In this paper, we also are interested in the lower bound part which we more specifically define as the following statement.
\medskip

\begin{defn}
We say that the lower bound part of the Malle conjecture is true if there exists $\alpha(G)>0$, depending only on $G$, such that $$ N(K,G,y) \geq c_1 y^{\alpha(G)} \text{ for all } y\geq y_0$$
for some positive constants $c_1, y_0$ depending on $K$, $G$.
\end{defn}

Note that the lower bound part already provides a positive answer to the inverse Galois problem. Our contribution is the following result, valid for any finite group $G$.

\vskip 2mm
\noindent \textbf{Theorem A.}  \textit{ Let $G$ be a finite group. There exists a number field $K_0$ such that the lower bound part of the Malle conjecture is true over every number field $K$ containing $K_0$. More precisely, the field $K_0$ can be any number field for which $G$ is a regular Galois group over $K_0$}.
\vskip 2mm

Recall that $G$ is said to be a {\it regular} Galois group over $K_0$ if there is a Galois extension $F/K_0(T)$ of group $G$ that is $K_0$-{\it regular} (i.e. $F\cap \overline{K_0}= K_0$). Over $\Q$, regular Galois groups include $S_n$ $(n\geq 1)$ and many simple groups : $A_n$ $(n\geq 5)$, many $PSL_2(\F_p)$, the Monster group, etc. \smallskip

Theorem A generalizes a previous result of Pierre Dèbes \cite{debconjmalle} who proved the lower bound part in the special case $K=\Q$ and when $G$ is supposed to be a regular Galois group over $\Q$. \smallskip

Malle also predicts the value of the expected exponent $a(G)$ in his conjecture : \\ $(|G|(1-1/l))^{-1}$ where $l$ is the smallest prime divisor of $|G|$. Our exponent $\alpha(G)$ will also be given explicitly. It is smaller than $a(G)$. We explain why in \S 2.3. 
\smallskip

There is a more general conjecture for not necessarily Galois extensions which we discuss in \S \ref{sectionpresentation}.5. The same lower bound holds for this general form of the conjecture (theorem \ref{generalcase}).

\subsection{The Grunwald problem.}
Furthermore, our approach makes it possible to impose some local constraints to the extensions $E/K$ that we count. This relates to the famous Grunwald problem.
\smallskip

For every prime $\ip$ of $K$, the completion of $K$ is denoted by $K_{\ip}$. The completion of $E$ is then the compositum $EK_{\ip}$ (with respect to any prime $\mathcal{P}$ above $\ip$). 
The Grunwald problem asks whether the following is true :
\medskip

\centerline{\begin{minipage}{0.8\textwidth} \textit{$(*)$ ~  Given a finite set $S$ of primes of $K$ and some finite Galois extensions $\left( L^{\ip}/K_{\ip}\right)_{\ip\in S}$ with Galois group embedding into $G$, there is a Galois extension $E/K$ of group $G$ whose completion $EK_{\ip}/K_{\ip}$ at $\ip$ is $K_{\ip}$-isomorphic to $L^{\ip}/K_{\ip}$ for every $\ip \in S$.}
\end{minipage}}
\medskip

\noindent Such an extension $E/K$ is called {\it a solution} to the {\it Grunwald problem} $\left(G,\left( L^{\ip}/K_{\ip}\right)_{\ip\in S} \right).$
 
The case of abelian, and more generaly solvable groups, has been studied by Grunwald, Wang and Neukirch \cite{wan48} \cite{neukirch1} : in particular, the answer is positive if $G$ is of odd order. But in general, some Grunwald problems exist with no solution, for example, if $G$ is cyclic of order $8$ and if $S$ contains a prime of $K$ lying over $2$ \cite{wan48}. Nowadays, it is expected that there should be an exceptional finite set $S_{exc}$ of primes such that $(*)$ holds if the set $S$ of primes is disjoint from $S_{exc}$. Several works have been devoted to this weak form \cite{harari07}, \cite{debgazi}, \cite{dlan}. It was recently established for hypersolvable groups (e.g. nilpotent) over any number field \cite{HaWi2018}. For non solvable groups, a result due to Dèbes and Ghazi \cite{debgazi} shows that any Grunwald problem $(L^{\ip}/K_{\ip})_{\ip \in S}$ (with $S \cap S_{exc}=\emptyset$), additionally assumed to be unramified, always has a solution if $G$ is a regular Galois group over $K$.
\medskip

Our result on this topic needs the following terminology from \cite{debgazi}. If $(G,(L^{\ip}/K_{\ip})_{\ip\in S})$ is a Grunwald problem over $K$ and $M/K$ is a finite Galois extension, denote by $S_M$ the set of primes of $M$ obtained by choosing one prime $\mathcal{P}$ of $M$ over each $\ip\in S$. Denote by $(G,(L^{\ip}M_{\mathcal{P}}/M_{\mathcal{P}})_{\mathcal{P}\in S_M})$ the Grunwald problem over $M$ induced by the base changes $M_{\mathcal{P}}/K_{\ip}$, $\ip \in S$. The base changed problem does not depend on the choice of the primes $\mathcal{P}$.
\smallskip

Note next that if $M/K$ is totaly split at each $\ip \in S$ then $M_{\mathcal{P}}=K_{\ip}$ and $L^{\ip}M_{\mathcal{P}}/M_{\mathcal{P}}=L^{\ip}/K_{\ip}$ $(\ip \in S$). A solution $E/M$ of the base changed Grunwald problem $(G,(L^{\ip}/K_{\ip})_{\ip\in S_M})$ will be said to be a $M$-{\it solution} of the (original) Grunwald problem $(G,(L^{\ip}/K_{\ip})_{\ip\in S})$.

\vskip 2mm
\noindent \textbf{Theorem B.}  \textit{ Let $G$ be a finite group and $K$ be a number field. There exists a finite set $S_{exc}$ of primes of $K$ with the following property : if $(G,(L^{\ip}/K_{\ip})_{\ip\in S})$ is any unramified Grunwald problem over $K$ with $S\cap S_{exc}=\emptyset$, then there exist a finite Galois extension $M/K$, totally split at each $\ip \in S$ and an infinite set of $M$-solutions $E/M$ to the Grunwald problem $(G,(L^{\ip}/K_{\ip})_{\ip\in S})$.
\smallskip
Furthermore, one can take $M=K$ if $G$ is a regular Galois group over $K$.}
\vskip 2mm

In particular, for all non solvable groups known to be regular groups over $\Q$, any unramified Grunwald problem has infinitely many solutions over $\Q$.

\subsection{Diophantine results.}

Both theorem A and theorem B are special cases of a more general result, theorem AB, which will be stated in \S \ref{sectionpresentation}.$2$. We will start with a regular Galois extension $F/K(T)$ of group $G$ and will use the set of extensions $F_{t_0}/K$ obtained  from $F/K(T)$ by specializing $T$ to $ t_0 \in K$ \footnote{Definition of specialized extensions is recalled in \S 2.}. From the \textit{Hilbert Irreducibility Theorem}, these specialized extensions are still Galois of group $G$ for a large number of $t_0$. \medskip

The idea is, for theorem A, to count the number of these specialized extensions and for theorem B, to show that some local conditions can be prescribed to these extensions. We will follow a method developed in \cite{debgazi} and \cite{debconjmalle} over $\Q$ and which has an important diophantine part. A major tool will be an estimate of the number $N(F,B)$ of rational points on a curve of height bounded by a number $B$. This is a classical problem, for which Heath-Brown introduced a method in $2002$ \cite{heathbrown1}, which was refined for curves by Walkowiak \cite{walarticle}, both over $\Q$. In this context we will prove the following result, which extends Walkowiak's result to any number field and may be interesting for its own sake.
\medskip

Denote by $O_K$ the ring of integers of $K$. We will use the following height for number $x\in O_K$, sometimes called the {\it house} of $x$ :
 $$H(x)=\displaystyle \max (|x_1|,\cdots,|x_d|)$$
 where $x_1,\cdots,x_d$ are the $\Q$-conjugates of $x$ (see \S \ref{sectiondiophantine} for more on heights).

Consider a polynomial $F(X_1,X_2) \in O_K[X_1,X_2]$, irreducible in $K[X_1,X_2]$, monic in $X_2$. \smallskip

For $B>0$, define 
$$N(F,B)=\#\{(x_1,x_2) \in O_K^2 ~ : ~ F(x_1,x_2)=0, ~ H(x_1)\leq B ,~ H(x_2)\leq B \}.$$

\vskip 2mm
\noindent \textbf{Theorem C.}  \textit{ If $B$ is suitably large (depending on $K$), we have
$$N(F,B) \leq c \deg(F)^{8} \hskip 2pt (\log B)^{3} \hskip 2pt B^{[K:\Q] /\deg(F)}$$ where $c$ is a constant depending on $K$.}
\vskip 2mm

Having such an estimate available for any number field is  crucial for our applications. Proof of theorem C is inspired by Walkowiak's work over $\Q$ but has to deal with several new phenomena occuring on an arbitrary number field. \smallskip

Theorem C has the following consequence more in the spirit of \textit{Hilbert's Irreducibility Theorem} and which we will use for our results.

\vskip 2mm
\noindent \textbf{Corollary C.}  \textit{ Let $F(T,Y) \in O_K[T,Y]$ irreducible and monic in $Y$. There exist some positive constants $a_1,...,a_4$ depending on $K$ such that for all suitably large $B$, the number $N_T(F,B)$ of $t\in O_K$ with $H(t) \leq B$ and such that $F(t,Y)$ has a root in $K$ satisfies   
$$N_T(F,B)\leq a_1\deg(F)^{a_2} \hskip 2pt (\log H(F))^{a_3} \hskip 2pt B^{[K:\Q]/\deg_Y(F)} \hskip 2pt (\log B)^{a_4}$$
where $H(F)$ is the height of $F$. \footnote{see \S 3 for more details on the height $H(F)$.}}
\vskip 2mm
 
Recall that the total number of $t\in O_K^2$ with $H(t)\leq B$ is asymptotic to $B^{[K:\Q]}$ (up to some multiplicative constant and a $\log B$ factor) \cite{schanuel} \cite{countalgint}.
\smallskip

The paper is organized as follows. In \S 2.1, we present two key results about specialization : theorem \ref{SPECTHM} and theorem \ref{HilbertMalle}. They are intermediate between the pure diophantine statements (theorem C and corollary C) and our applications (theorem A and theorem B). How we use them to obtain the applications is done in 3 steps and explained in \S 2. In \S 2.2, theorem AB is stated. In \S 2.3, theorem AB is shown to imply theorems A and theorem B. In \S 2.4, theorem AB is proved  assuming theorems \ref{SPECTHM} and \ref{HilbertMalle}. \S \ref{sectiondiophantine} is dedicated to the proof of theorem C and corollary C. Finally, theorems \ref{SPECTHM} and \ref{HilbertMalle} are proved in \S 4.
 
 \smallskip
 
 The following figure summarizes the structure of our approach.
 
\begin{figure}[h]
\begin{center}
\begin{tikzpicture}[scale=2,every node/.style={circle}]
  \node[draw] (1) at (0,0)  {Th C};
    \node[draw] (2) at (2,0)  {Cor C};
        \node[draw] (3) at (3,1)  {Th 2.1};
            \node[draw] (4) at (3,-1)  {Th 2.2};
                    \node[draw] (5) at (4,0) {Th AB};
                    \node[draw] (6) at (5,1) {Th A};
                    \node[draw] (7) at (5,-1) {Th B};  

  \path

 (1) edge [->,>=latex] (2)
%  (2) edge [->] (3)
    (2) edge [->,>=latex] (4)
       (3) edge [->,>=latex] (5)
          (4) edge [->,>=latex] (5)
          (5) edge [->,>=latex] (6)
          (5) edge [->,>=latex] (7)
  ;
\end{tikzpicture}
\end{center}
%    \caption{\label{schema}}
\end{figure}
\newpage

\section{two specialization results and their applications} \label{sectionpresentation}

Both theorem \ref{SPECTHM} and theorem \ref{HilbertMalle} deal with specializations of a regular Galois extension $F/K(T)$ of group $G$. The first one is a version of \textit{Hilbert's Irreducibility Theorem} : it explicitely produces many $t_0$ such that the specialized extention $F_{t_0}/K$ is of group $G$. The second one shows that not so many of these specialized extensions $F_{t_0}/K$ can be isomorphic. \smallskip

We retain the following notation. Fix for the whole \S 2 a number field $K$ of degree $\dd = [K:\Q]$, a finite group $G$ and a $K$-regular Galois extension $F/K(T)$ of group $G$. Denote by $r$ the number of branch points of $F/K(T)$ (or equivalently of the associated cover $f: X\rightarrow \PP$) and the genus of $F$ (or of $X$) by $g$. For a prime $\ip$ of $K$, the prime number lying below $\ip$ is denoted by $p_{\ip}$ and we have $\ip \cap \Z = p_{\ip}\Z$. 
 \smallskip

Given a point $t_0 \in K$ (or $t= \infty$), the specialization of $F/K(T)$ at $t_0$ is the residue extension of the integral closure of the localized ring $K[T]_{\langle T-t_0 \rangle}$ in $F$ at an arbitrary prime above $\langle T-t_0 \rangle$. Denote it by $F_{t_0}/K$. If $P(T,Y) \in O_K[T,Y]$ is what we call an {\it affine model} of $F/K(T)$, i.e. the minimal polynomial of some primitive element of $F/K(T)$ integral over $K[T]$, then for all $t_0\in K$ not in the finite list of roots of the discriminant $\Delta_P(T)$ of $P$ with respect to $Y$, the specialization  $F_{t_0}/K$ is also the splitting field of $P(t_0,Y)\in K[Y]$.

\subsection{Statements of theorems \ref{SPECTHM} and theorem \ref{HilbertMalle}.}

Theorem \ref{SPECTHM} below gives a lower bound for the number of \og good {\fg} specialization points $t_0$ of bounded height.
\smallskip

Our statement also involves some local conditions that the specialized extensions should satisfy. Given a set $S$ of prime ideals of $O_K$, one defines a {\it Frobenius data} on $S$ as a collection $\mathcal{F}_S=(\mathcal{F}_{\ip})_{\ip \in S}$ of subsets $\frobp \subset G$, each $\frobp$ being a non-empty union of conjugacy classes of $G$. The set $S$ is said to be over the interval $[a,b]$ if $S$ is the set of all prime ideals over the prime numbers $p\in [a,b].$ Requiring that for each $\ip \in S$, the Frobenius ${\rm Frob}_{\ip}(F_{t_0}/K)$ lies in $\frobp$ will be the form of our local prescription to our specializations $F_{t_0}/K$. For example, if $\frobp=\{1\}$ for every $\ip \in S$, it is that $F_{t_0}/K$ should be totally split at each prime $\ip \in S$.
\smallskip

Choose 
\begin{itemize}
\item a prime number $p_{-1} \geq r^2g^2$ and such that every prime number $p$ which is ramified in $K/\Q$ is $\leq p_{-1}$ and
\item a prime number $p_0$ such that  the interval $]p_{-1},p_0[$ has at least as many prime numbers as there are conjugacy classes in $G$. \\ 
\end{itemize}
The primes $p_{-1}$ and $p_0$ depend on $K,r,g$ and $K,r,g,G$ respectively. For $B>0$, let $S_B$ be the set of primes of $K$ over the interval $[p_0,\log(B)/2]$.

\begin{theorem} [{\it Hilbert type}] \label{SPECTHM} There exists a number $c>0$ (depending on $F/K(T)$) such that if $B$ is suitably large (depending on $F/K(T)$), if $\frob_B=(\frobp)_{\ip \in S_B}$ is any Frobenius data on $S_B$, the number of $t_0 \in O_K$ of height $H(t_0)\leq B$ such that 
\begin{itemize}
\item the specialized extension $F_{t_0}/K$ is of group $G$,
\item ${\rm Frob}_{\ip} (F_{t_0}/K) \in \frobp$ for every $\ip \in S_B$.
\end{itemize}
 $$ \text{is at least }\displaystyle  \frac{B^{\dd}}{c^{\log B/\log \log B}}.$$ 
\end{theorem}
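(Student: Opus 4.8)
The plan is to reduce the counting of "good" specialization points to a sieve argument combining three ingredients: a Hilbert-irreducibility-type count that throws away the points where $F_{t_0}/K$ fails to have group $G$, a Chebotarev/Frobenius argument at the primes of $S_B$ to control the local behaviour, and the diophantine upper bound of Corollary C to show that the discarded sets are negligible. Concretely, fix an affine model $P(T,Y)\in O_K[T,Y]$ of $F/K(T)$. I would start from the box $\mathcal{B}_B=\{t_0\in O_K:H(t_0)\le B\}$, whose cardinality is $\asymp B^{\dd}$ by Schanuel's theorem (cited in the excerpt). The points to be removed are of two kinds: those for which the splitting field of $P(t_0,Y)$ does not realize all of $G$ (equivalently, some factor of a suitable resolvent acquires a $K$-rational root), and those violating at least one of the Frobenius conditions at a prime $\ip\in S_B$.

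For the first kind, the standard device is that $F_{t_0}/K$ fails to have group $G$ only if, for one of finitely many proper subgroups $H\lneq G$ (it suffices to use maximal ones, or the stabilizers giving the resolvents), the corresponding cover $X_H\to\PP$ has a $K$-rational point above $t_0$; each such cover is defined by an irreducible polynomial $F_H(T,Y)$ over $O_K$ with $\deg_Y F_H\ge 2$, so by Corollary C the number of bad $t_0$ of height $\le B$ is $O(B^{\dd/2}(\log B)^{a_4})$, which is negligible against $B^{\dd}$. For the Frobenius conditions I would use the description of ${\rm Frob}_{\ip}(F_{t_0}/K)$ in terms of the reduction of $P(t_0,Y)$ modulo $\ip$: for a prime $\ip$ of good reduction (automatic once $p_\ip\ge p_0$, given the choice of $p_{-1}$ relative to $r,g$ and the ramification of $K$), the Frobenius conjugacy class is read off from the cycle type of the factorization of $\overline{P}(t_0,Y)$ over the residue field $\F_{\ip}$, hence depends only on $t_0\bmod\ip$. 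So the condition ${\rm Frob}_{\ip}(F_{t_0}/K)\in\frobp$ picks out a union of residue classes mod $\ip$; by the function-field Chebotarev theorem (Lang--Weil / the Chebotarev theorem for $F\F_{\ip}/\F_{\ip}(T)$, with the $r^2g^2$ bound on $p_{-1}$ ensuring the error term is controlled) the proportion of admissible residues is at least $|\frobp|/|G|\cdot(1-O(p_\ip^{-1/2}))\ge c_0/|G|$ for some absolute $c_0>0$, since $\frobp$ is a nonempty union of conjugacy classes.

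The final step is to assemble these local densities by a sieve (Chinese Remainder Theorem over the $\ip\in S_B$, i.e. over all primes in $[p_0,\log B/2]$), counting lattice points in $\mathcal{B}_B$ lying in the prescribed residue classes modulo $\mathfrak{m}=\prod_{\ip\in S_B}\ip$. Because $\log\mathfrak{m}\ll\sum_{p\le\log B/2}\log p\ll\log B$, the modulus $N_{K/\Q}(\mathfrak{m})$ is only $B^{o(1)}$, so the lattice-point count in each admissible class is $\asymp B^{\dd}/N(\mathfrak{m})$ with an admissible error, and the number of admissible classes is $\prod_{\ip\in S_B}(\text{density}_\ip\cdot N(\ip))\ge\prod_{\ip\in S_B}(c_0/|G|)N(\ip)$. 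Multiplying, the main term is $\ge (c_0/|G|)^{\#S_B}\,B^{\dd}$; since $\#S_B\le\pi(\log B/2)\ll\log B/\log\log B$, this is exactly $B^{\dd}/c^{\log B/\log\log B}$ for a suitable $c$, and subtracting the $O(B^{\dd/2+\varepsilon})$ from the first kind changes nothing. I expect the main obstacle to be the second step done uniformly: making the function-field Chebotarev estimate simultaneously valid for all $p_\ip$ up to $\log B/2$ with an error term small enough that the product over all of $S_B$ does not collapse — this is where the lower bound $p_{-1}\ge r^2g^2$ and the careful choice of the range $[p_0,\log B/2]$ (rather than a longer interval) are essential, and where one must be careful that the reduced cover $X\bmod\ip$ remains geometrically irreducible with the right monodromy, i.e. that $\ip$ is a prime of good reduction for the cover, so that the Frobenius/cycle-type dictionary is valid.
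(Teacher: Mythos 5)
Your overall strategy — reduce to a Chinese Remainder sieve over the primes of $S_B$, using Lang--Weil/function-field Chebotarev to get the local density $\nu(\frobp)/N(\ip)\gtrsim |\frobp|/|G|$ at each $\ip$ — is the right idea and is indeed what the paper does. But there are two concrete divergences, one of which is an error.

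The error is the claim that $N_{K/\Q}(\mathfrak{m})=B^{o(1)}$ where $\mathfrak{m}=\prod_{\ip\in S_B}\ip$. Since every prime of $K$ over a rational $p\in[p_0,\log B/2]$ appears, $N(\mathfrak{m})=\prod_{p_0\le p\le \log B/2}p^{\dd}$ (up to ramification), and $\sum_{p\le y}\log p\sim y$ gives $\log N(\mathfrak{m})\sim \dd\,(\log B)/2$, i.e.\ $N(\mathfrak{m})\approx B^{\dd/2}$, not $B^{o(1)}$. This matters: you then want to count $t_0\in O_K$ of height $\le B$ in a single residue class mod $\mathfrak{m}$, with main term $\asymp B^{\dd}/N(\mathfrak{m})\approx B^{\dd/2}$, and you assert an ``admissible error'' without addressing it. Controlling the boundary error in a lattice-point count over a number ring with modulus of norm $B^{\dd/2}$ is exactly the kind of thing that needs a Lipschitz/Davenport estimate with constants depending on the successive minima of $\mathfrak{m}$ as a $\Z$-lattice, and it is not automatic that it is $o(B^{\dd}/N(\mathfrak{m}))$. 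The paper avoids this entirely: it \emph{enlarges} the interval to $[p_{-1},p_B]$ with $p_B\sim\log B$ chosen so that $\Pi(p_B)\approx B$ (hence $N(I)\approx B^{\dd}$), and then simply selects one explicit representative $t_0=\sum m_ie_i$ with $0\le m_i<\Pi(S_x)$ in each coset of $\tau(S_x,\frob_x)$ mod $I$ and checks $H(t_0)\le\dd H(\underline e)\Pi(S_x)\le B$. One point per coset gives the lower bound with no lattice-point error analysis at all.

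The second divergence is how you force $\mathrm{Gal}(F_{t_0}/K)=G$. You propose to subtract the $t_0$ for which a resolvent polynomial $F_H$ acquires a root, bounding the loss by Corollary C as $O(B^{\dd/2}(\log B)^{a_4})$. This is a legitimate alternative, and the subtraction does remain negligible against $B^{\dd}/c^{\log B/\log\log B}$. But it is not the paper's route, and it imports Corollary C into the proof of Theorem 2.1, whereas the paper's dependency structure keeps Theorem 2.1 purely ``Chebotarev''. The paper instead exploits the primes over $]p_{-1},p_0[$: it extends the given Frobenius data by assigning, to each conjugacy class of $G$, one such prime $\ip$ with $\frob_{\ip}$ equal to that class (possible by the choice of $p_0$), and then applies Jordan's lemma: a subgroup of $G$ meeting every conjugacy class is $G$. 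This guarantees $\mathrm{Gal}(F_{t_0}/K)=G$ for \emph{every} chosen representative $t_0$, with no subtraction and no appeal to Corollary C.
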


In the spirit of the Malle conjecture, we have to count not just the number of good specialization points $t_0$ but the number of different corresponding extensions $F_{t_0}/K$. Here enters the \textit{Hilbert-Malle type theorem} \ref{HilbertMalle} below. The special case $K=\Q$ was proved in \cite{debconjmalle}. We generalize it to arbitrary number fields.

\begin{theorem}[{\it Hilbert-Malle type}]\label{HilbertMalle} 
Let $B>0$ be a real number. Let $\mathcal{H} \subset O_K$ be a subset consisting of $t_0$ such that ${\rm Gal}(F_{t_0}/K)=G \text{ and } ~ H(t_0) \leq B$. Denote by $\mathcal{N}(B,\mathcal{H})$ the number of corresponding  specialized field extensions $F_{t_0}/K$ when $t_0  \in \mathcal{H}$.
There exist $E,\gamma \geq 0$ depending on $F/K(T)$ such that if $B$ is suitably large (depending on $F/K(T)$), we have $$\mathcal{N}(B,\mathcal{H}) \geq \frac{|\mathcal{H}|-E}{B^{[K:\Q]/|G|}(\log B)^\gamma}.$$
\end{theorem}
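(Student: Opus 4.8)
The plan is to bound, for each individual number field $N$, the number of $t_0\in\mathcal H$ with $H(t_0)\le B$ and $F_{t_0}=N$, and then to sum these bounds over the $\mathcal N(B,\mathcal H)$ distinct fields that occur. First I would fix an affine model $P(T,Y)\in O_K[T,Y]$ of $F/K(T)$, monic in $Y$ of degree $n:=|G|$, and let $\Sigma\subset K$ be the finite set made of the branch points of $f$ together with the zeros of the discriminant of $P$ with respect to $Y$; its cardinality $E_0$ depends only on $F/K(T)$. For $t_0\in\mathcal H\setminus\Sigma$ one has $\mathrm{Gal}(F_{t_0}/K)=G$ and $F_{t_0}$ is the splitting field of the separable polynomial $P(t_0,Y)$; since $F/K(T)$ is $K$-regular the fibre of $f$ is a $G$-torsor, so $G=\mathrm{Gal}(F_{t_0}/K)$ acts transitively on the roots of $P(t_0,Y)$, hence $P(t_0,Y)$ is irreducible over $K$ and $F_{t_0}=K(y_0)$ for any root $y_0$. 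In particular, whenever $N=F_{t_0}$ we have $[N:K]=n$, and $N$ is generated over $K$ by an algebraic integer $y_0$ whose house is bounded by a polynomial in $H(t_0)$ (a Cauchy bound on the roots of the monic polynomial $P(t_0,Y)$), the exponent and constant depending only on $F/K(T)$.

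The main tool would be the twisting lemma of D\`ebes (see \cite{debconjmalle}, \cite{debgazi}): for a Galois extension $N/K$ of group $G$ together with an isomorphism $\phi\colon\mathrm{Gal}(N/K)\xrightarrow{\sim}G$, there is a twisted cover $f_\phi\colon {}_{\phi}X\to\PP$, defined over $K$, geometrically irreducible (as ${}_{\phi}X\otimes_K\overline K\cong X\otimes_K\overline K$ and $X$ is geometrically irreducible), of degree $n$ and genus $g$, with the same branch locus as $f$, and such that for every $t_0\in K\setminus\Sigma$ the specialized $G$-extension $F_{t_0}/K$ is isomorphic to $(N,\phi)$ if and only if $t_0\in f_\phi\big({}_{\phi}X(K)\big)$. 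Choosing a primitive element of ${}_{\phi}X$ over $\PP$ that is integral over $O_K[T]$ yields an affine model $Q=Q_{N,\phi}\in O_K[T,Y]$, monic in $Y$ of degree $n$, irreducible over $K$, of total degree at most a constant $D$ depending only on $F/K(T)$; because $Q$ is monic in $Y$, a $K$-point of ${}_{\phi}X$ over a finite $t_0$ maps to a $K$-point $(t_0,y)$ of the plane curve $Q(T,Y)=0$, so $Q(t_0,Y)$ then has the root $y\in K$. The key extra point is that, if $N=F_{t_1}$ for some $t_1$ with $H(t_1)\le B$, the model $Q$ can be produced so that $H(Q)\le c_0B^{m_0}$ for constants $c_0,m_0$ depending only on $F/K(T)$: the coefficients of $Q$ are polynomials in $T$ whose coefficients are symmetric functions of the $\overline K$-conjugates of a generator of $N$ (of house at most a polynomial in $B$, by the first paragraph) and of the coordinate functions of the fixed cover $X$. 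It is efficient to get this uniformly by realizing all the ${}_{\phi}X$ as specializations at $t_1$ of a single \emph{self-twisted cover} defined over $K(T)$.

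I would then conclude with Corollary C. For $B$ large, any $N=F_{t_1}$ with $H(t_1)\le B$, and any of the at most $|\mathrm{Aut}(G)|$ relevant structures $\phi$, Corollary C applied to $Q_{N,\phi}$ — whose $Y$-degree is $n=|G|$, whose total degree is $\le D$, and which satisfies $\log H(Q_{N,\phi})=O(\log B)$ — gives $N_T(Q_{N,\phi},B)\le C_1\,B^{[K:\Q]/|G|}(\log B)^{a_3+a_4}$, where $C_1$ depends only on $F/K(T)$ (the factors $\deg(Q_{N,\phi})^{a_2}$ and $(\log H(Q_{N,\phi}))^{a_3}$ being absorbed). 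By the twisting lemma this bounds, up to the factor $|\mathrm{Aut}(G)|$, the number of $t_0\in\mathcal H\setminus\Sigma$ with $H(t_0)\le B$ and $F_{t_0}=N$. Writing $N_1,\dots,N_{\mathcal N(B,\mathcal H)}$ for the distinct fields $F_{t_0}$, $t_0\in\mathcal H$, and setting aside the at most $E_0$ points of $\mathcal H\cap\Sigma$, summation over $j$ gives
$$|\mathcal H|\ \le\ E_0\ +\ \mathcal N(B,\mathcal H)\cdot |\mathrm{Aut}(G)|\,C_1\,B^{[K:\Q]/|G|}(\log B)^{a_3+a_4}.$$
Taking $E:=E_0$ and $\gamma:=a_3+a_4+1$ (so that $|\mathrm{Aut}(G)|\,C_1(\log B)^{a_3+a_4}\le(\log B)^{\gamma}$ once $B$ is large enough) yields $\mathcal N(B,\mathcal H)\ge (|\mathcal H|-E)/\big(B^{[K:\Q]/|G|}(\log B)^{\gamma}\big)$, as wanted.

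The hard part is the second paragraph: constructing the twisted cover together with a monic-in-$Y$ integral plane model of controlled total degree and, above all, of height polynomially bounded in $B$ — uniformly over the infinitely many fields $N$ that can occur as some $F_{t_1}$ with $H(t_1)\le B$. This is exactly where the bound on $H(t_1)$ is used, and it is what makes it worthwhile to package the twists into one self-twisted cover over $K(T)$, reducing every estimate to that of a single fixed object. The remaining ingredients are the elementary bookkeeping above and the diophantine input of Corollary C.
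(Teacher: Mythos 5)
Your proposal is correct and follows essentially the same route as the paper: the paper invokes a self-twisted cover result (Theorem 4.4, from \cite{noparametric}) producing polynomials $\widetilde{P}_i(U,T,Y)\in O_K[U,T,Y]$ monic in $Y$ of degree $|G|$, bounds $H(\widetilde{P}_i(u_0,T,Y))$ polynomially in $H(u_0)\le B$, and then applies Corollary~C to each fibre exactly as you describe. Your version phrases the construction via the twisting lemma and then recognizes that the uniform height control requires packaging everything into a single self-twisted cover over $K(T)$, which is precisely the paper's mechanism.
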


\subsection{A unified version of theorem A and theorem B.} 

Retain the notation and assumptions of \S 2. Fix an affine model $P(T,Y) \in O_K[T,Y]$ of $F/K(T)$; note that $P$ is monic in $Y$. If $\Delta_P(T)$ is the discriminant of $P$ relative to $Y$, set $\delta_P = \deg(\Delta_P(T))$. Fix $\delta >\delta_P$. As in \cite{debconjmalle}, one can take $\delta = 3r |G|^4 \log(|G|)$.
 \medskip

Given a finite set $S$ of primes of $K$ and a Frobenius data $\mathcal{F}$ on $S$, let $N(F/K(T),y,\mathcal{F})$ be the number of distinct Galois extensions $F_{t_0}/K$ of group $G$ obtained by specialization from $F/K(T)$ at some $t_0 \in K$, with discriminant of norm $|N_{K/\Q}(d_{F_{t_0}/K})|\leq y$ and such that for every $\ip \in S$, $F_{t_0}/K$ is unramified in $\ip$ and ${\rm Frob}_{\ip}(F_{t_0}/K)\in \frobp$. \smallskip

We say that a prime $\ip$ of $K$ is {\it good} for $F/K(T)$ if $\ip$ does not divide $|G|$, the branch divisor $\mathbf{t}=\{t_1,\cdots, t_r\}$ is étale at $\ip$ and there is no vertical ramification at $\ip$. We say $\ip$ is {\it bad} otherwise (we refer to \cite{debgazi} and \cite{legrandspec} for precise definitions). We will use that there exist only finitely many bad primes. \smallskip

The constant $p_0$ in theorem AB below is the one that appears in theorem \ref{SPECTHM}.

\vskip 2mm
\noindent \textbf{Theorem AB.}  \textit{ For every number $y>0$, consider the set $S_y$ of primes $\ip$ of $K$ over some prime $\displaystyle p\in [p_0,\frac{\log y}{2 \dd \delta}]$ that are good for $F/K(T)$. If $y$ is suitably large (depending on $F/K(T)$, $\delta$), then for every Frobenius data $\frob_y$ on $S_y$, we have $$N(F/K(T),y,\frob_{y}) \geq y^{(1-1/|G|)/\delta}.$$}
\vskip 2mm

\subsection{Proof of theorem AB assuming theorems \ref{SPECTHM} and \ref{HilbertMalle}.} \label{preuve26}
Theorem \ref{SPECTHM} produces many {\og good \fg} specialization points $t_0$ with arbitrarily bounded height $H(t_0)$. We explain below how to bound $H(t_0)$ in terms of some given number $y>0$ to fullfill the required condition $|N_{K/\Q}(d_{F_{t_0}/K})|\leq y$. \smallskip

Set $\delta^-=\displaystyle \frac{\delta + \delta_P}{2}$ (we have $\delta_P<\delta^-<\delta$) and $B=y^{1/ \dd \delta^-}$.

\begin{prop} \label{Benfctdey}
For $y$ suitably large, the specializations $F_{t_0}/K$ of $F/K(T)$ at $t_0\in O_K$ such that $\Delta_P(t_0)\neq 0$, $H(t_0) \leq B$ and $F_{t_0}/K$ is Galois of group $G$ satisfy $|N_{K/\Q}(d_{F_{t_0}/K})|\leq y.$
\end{prop}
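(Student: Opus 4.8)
The plan is to control the relative discriminant $d_{F_{t_0}/K}$ by the specialized polynomial discriminant $\Delta_P(t_0)$, then control the latter by the house $H(t_0) \leq B$, and finally observe that the resulting exponent is $<1$ because $\delta$ — hence $\delta^- = (\delta+\delta_P)/2$ — has been taken large. Concretely I aim at a chain
$$|N_{K/\Q}(d_{F_{t_0}/K})| \ \leq\ C_1\, |N_{K/\Q}(\Delta_P(t_0))|^{|G|-1} \ \leq\ C_2\, H(t_0)^{\dd\,\delta_P(|G|-1)} \ \leq\ C_2\, y^{\,\delta_P(|G|-1)/\delta^-}$$
with $C_1,C_2 > 0$ depending only on $F/K(T)$, and then absorb $C_2$ for $y$ large.

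First I would locate the ramified primes. Since $P$ is monic in $Y$ and $\Delta_P(t_0)\neq 0$, any prime $\ip$ of $K$ not dividing $\Delta_P(t_0)$ leaves $P(t_0,Y)$ separable modulo $\ip$, hence $\ip$ is unramified in $K(y_0)$ for every root $y_0$ of $P(t_0,Y)$, hence unramified in the compositum $F_{t_0}$ of all such $K(y_0)$; so each prime ramified in $F_{t_0}/K$ divides $\Delta_P(t_0)$ or lies in the finite, $t_0$-independent set of bad primes of $F/K(T)$. Next I would bound the local exponents. At a good prime $\ip\mid\Delta_P(t_0)$ we have $\ip\nmid|G|=[F_{t_0}:K]$, so the ramification is tame and $v_\ip(d_{F_{t_0}/K})=\sum_{\mathcal{P}\mid\ip}(e_{\mathcal{P}}-1)f_{\mathcal{P}}\leq|G|-1$; at a bad prime $\ip$, Serre's bound for the different of a degree-$|G|$ extension of the fixed local field $K_\ip$ gives $v_\ip(d_{F_{t_0}/K})\leq M_\ip$ with $M_\ip$ depending only on $\ip$ and $|G|$. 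Multiplying these contributions and using $\prod_{\ip\mid a}N_{K/\Q}(\ip)\leq|N_{K/\Q}(a)|$ for nonzero $a\in O_K$ gives the first inequality above, with $C_1=\prod_{\ip\text{ bad}}N_{K/\Q}(\ip)^{M_\ip}$.

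For the second inequality I would write $\Delta_P(T)=\sum_{i=0}^{\delta_P}c_iT^i$ with $c_i\in O_K$ and use the subadditivity and submultiplicativity of the house to get $H(\Delta_P(t_0))\leq(\delta_P+1)\big(\max_i H(c_i)\big)H(t_0)^{\delta_P}$ whenever $H(t_0)\geq 1$, together with $|N_{K/\Q}(x)|\leq H(x)^{\dd}$ for $x\in O_K$; plugging $H(t_0)\leq B=y^{1/(\dd\,\delta^-)}$ yields the third inequality. To finish, I would check the exponent is $<1$: since $\delta^-=(\delta+\delta_P)/2$ and the explicit value $\delta=3r|G|^4\log(|G|)$ dominates $(2|G|-1)\delta_P$ — which follows from $\delta_P\leq(2|G|-1)\deg_T(P)$ and the Riemann--Hurwitz-type polynomial bound on $\deg_T(P)$ in $r$ and $|G|$, exactly as in \cite{debconjmalle} — one gets $\delta^->(|G|-1)\delta_P$, so $\kappa:=\delta_P(|G|-1)/\delta^-<1$ and $C_2\,y^{\kappa}\leq y$ as soon as $y\geq C_2^{1/(1-\kappa)}$.

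The step I expect to be the main obstacle is the discriminant control over an arbitrary number field: making sure that a \emph{good} prime dividing $\Delta_P(t_0)$ really forces tame ramification with the clean bound $|G|-1$ (this is exactly where the definition of good prime — $\ip\nmid|G|$, étale branch divisor, no vertical ramification — is used), that the finitely many bad primes contribute an amount uniformly bounded in $t_0$, and that the explicit $\delta$ beats the available bound on $\delta_P$. The house/norm manipulations are routine by comparison.
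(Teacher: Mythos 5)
Your argument contains a genuine gap that the paper avoids with a much simpler observation. You bound the discriminant via a prime-by-prime ramification analysis (tame bound $v_\ip(d_{F_{t_0}/K})\leq |G|-1$ at good ramified primes, Serre's bound at bad ones) and arrive at
$$|N_{K/\Q}(d_{F_{t_0}/K})| \leq C_1\, |N_{K/\Q}(\Delta_P(t_0))|^{|G|-1},$$
which after the height estimate yields an exponent $\kappa=\delta_P(|G|-1)/\delta^-$. To get $\kappa<1$ you need $\delta^->( |G|-1)\delta_P$, i.e.\ $\delta>(2|G|-3)\delta_P$. But the proposition is stated for a \emph{fixed} $\delta>\delta_P$ and $\delta^-=(\delta+\delta_P)/2$; nothing in the hypotheses forces $\delta$ to dominate $(2|G|-3)\delta_P$. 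You try to patch this by invoking the sample value $\delta=3r|G|^4\log|G|$ together with an unproved ``Riemann--Hurwitz-type'' bound on $\deg_T(P)$, but that is both extraneous to the statement (which must hold for any admissible $\delta$) and left as a hand-wave. The paper's proof is instead a one-line divisibility: since $F_{t_0}/K$ has group $G$ and degree $|G|$, the polynomial $P(t_0,Y)$ is irreducible of degree $|G|$, any root $y_0$ lies in $O_{F_{t_0}}$ and generates $F_{t_0}/K$, so
$$d_{F_{t_0}/K}\ \big|\ {\rm disc}(1,y_0,\dots,y_0^{|G|-1})=\Delta_P(t_0),$$
giving directly $|N_{K/\Q}(d_{F_{t_0}/K})|\leq|N_{K/\Q}(\Delta_P(t_0))|$ with exponent $1$, after which $\delta_P<\delta^-$ suffices. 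Your local analysis throws away precisely the information (the valuation $v_\ip(\Delta_P(t_0))$ itself, not just that it is $\geq1$) that makes this sharp. Incidentally, your first step --- ``$\ip\nmid\Delta_P(t_0)$ implies $\ip$ unramified in $F_{t_0}$'' --- is already a consequence of the divisibility $d_{F_{t_0}/K}\mid\Delta_P(t_0)$, so you have in effect re-derived a weaker form of what you should have used outright. Replacing your step two by this divisibility repairs the proof and renders the bad-prime constant $C_1$ and Serre's bound unnecessary.
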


\begin{proof}
The polynomial $P(t_0,Y)$ is in $O_K[Y]$ (as $t_0 \in O_K$), is monic, irreducible in $K[Y]$ and of degree $|G|$. Hence, if $y_0 \in \overline{K}$ is a root of $P(t_0,Y)$, then $1,y_0,\cdots,y_0^{|G|-1}$ is a $K$-basis of $F_{t_0}/K$ consisting of elements in $O_{F_{t_0}}$. Thus $$d_{F_{t_0}/K} \mid {\rm disc}(1,y_0,\cdots,y_0^{|G|-1})={\rm disc}(P(t_0,Y))={\rm disc}_Y(P(T,Y))_{T=t_0}=\Delta_P(t_0)$$
We deduce $$|N_{K/\Q}(d_{F_{t_0}/K})|\leq |N_{K/\Q}(\Delta_P(t_0))|.$$ 

Straightforward estimates involving norms and height show next that 
$$|N_{K/\Q}(\Delta_P(t_0))| \leq C B^{\dd \delta_P}$$
for some constant $C>0$ depending on $P$ and $K$; these estimates are detailed in \S 3.1. Hence we obtain :
$$ |N_{K/\Q}(d_{F_{t_0}/K})| \leq C B^{\dd \delta_P},$$

The $\log$ of this last term is $$\log [C B^{\delta_P \dd}] \sim \frac{\dd \delta_P}{ \dd \delta^-}\log y.$$ As $\delta_P< \delta^-$, conclude that for $y$ suitably large in terms of $F/K(T)$ and $ \delta$, we have $$|N_{K/\Q}(d_{F_{t_0}/K})|\leq y.$$
\end{proof}

We will apply theorem \ref{SPECTHM} with $B=y^{1/\dd\delta^-}$ and theorem \ref{HilbertMalle} with the following choice of the set $\mathcal{H}$ : the set of $t_0\in O_K$ satisfying the conclusions of theorem \ref{SPECTHM} with $B=y^{1/\dd \delta^-}$. We can now proceed to the proof of theorem AB.%\ref{malle}.
\medskip

As $\delta^{-} < \delta$, by the choice of $B$, we have $\displaystyle [p_0, \frac{\log y}{2 \dd \delta}] \subset [p_0,\log B /2]$. Fix a Frobenius data $\frob_y$ on $S_y$ and extend it in an arbitrary way to a Frobenius data on $S_B \supset S_y$ of all the primes of $K$ over the interval $[p_0, \log B /2]$.
\smallskip

According to theorem \ref{SPECTHM}, we have $|\mathcal{H}| \geq \displaystyle  \frac{B^{\dd}}{c^{\log B/\log \log B}}$. From theorem \ref{HilbertMalle}, there exist $E,\gamma \geq 0$ depending on $F/K(T)$ such that for $y$ suitably large,
\begin{align*}
\mathcal{N}(B,\mathcal{H}) & \geq \frac{|\mathcal{H}|-E}{B^{\dd /|G|} (\log B)^{\gamma}} \\ 
& \geq \displaystyle \frac{ B^{\dd - \dd /|G|}}{ (\log B)^{\gamma} \hskip 2pt c^{\log B/\log \log B}} - \frac{E}{B^{\dd /|G|} (\log B)^{\gamma}}
\end{align*}
%où $c=\displaystyle  \frac{\chi(\mathcal{F}_x)}{\left(2\Pi(p_{-1})\right)^{\dd}} \times \left(\frac{1}{2r|G|}\right)^{|S_x|}$. Puis 

Denote the last lower bound by $f(B)$. The logarithm of $f(B)$ is asymptotic to \\ $\dd (1-1/|G|)\log B$. From the choice of $B$, we finally obtain

$$\log(f(B)) \sim \frac{\delta}{\delta^-}\log(y^{(1-1/|G|)/\delta}).$$

\noindent Because $\delta>\delta^-$, we obtain that for $y$ suitably large $\log(f(B)) > \log(y^{(1-1/|G|)/\delta})$ and so $$\mathcal{N}(B,\mathcal{H}) \geq y^{(1-1/|G|)/\delta}.$$

The inequality $N(F/K(T),y,{\rm \frob}_{y}) \geq \mathcal{N}(B,\mathcal{H})$ concludes the proof of theorem AB.%\ref{malle}. 
\qed

\begin{rmq}
Our counted extensions are obtained by specialization of one single regular extention $F/K(T)$. There may be other extensions $E/K$ (not coming from $F/K(T)$ by specialization) satisfying the same conditions. This explains why our constant $\displaystyle \alpha(G)=\frac{1-1/|G|}{\delta}$ is smaller than the Malle constant $a(G)$ (see \cite[lemma 4.1]{debconjmalle}).
\end{rmq}

\subsection{Proof of theorems A and B assuming theorem AB.} Concerning theorem A, one proceeds as follows. Classically, every finite group $G$ is known to be a regular Galois group over some number field, say $K_0$. If $K$ is a number field containing $K_0$, $G$ is still a regular Galois group over $K$. Clearly $N(K,G,y)$ from \S 1.1 is bigger than $N(F/K(T),y,\frob_{y})$ from theorem AB. %\ref{malle}
Thus theorem A %\ref{lowconjmalle}
(with $\alpha(G)=(1-1/|G|)/{\delta}$) follows immediately from theorem AB.%\ref{malle}.
\medskip

To prove theorem B, %\ref{resultgrunwald}
 suppose first that $G$ is a regular Galois group over $K$ and fix a $K$-regular Galois extension $F/K(T)$ of group $G$. Consider an unramified Grunwald problem $(G,(L^{\ip}/K_{\ip})_{\ip\in S})$. For each $\ip \in S$, let $\frobp$ be the conjugacy class in $G$ of the Frobenius of $L^{\ip}/K_{\ip}$ (which generates ${\rm Gal}(L^{\ip}/K_{\ip})$). Then for a Galois extension $L/K$ of group $G$, unramified at $\ip$, we have $LK_{\ip}/K_{\ip}=L^{\ip}/K_{\ip}$ if and only if ${\rm Frob}_{\ip}(L/K)\in \frobp$. Theorem B %\ref{resultgrunwald}
(in this first case) then follows from theorem AB.%\ref{malle}. 
\smallskip

Namely, the set $S_{exc}$ can be chosen as the set of primes $\ip$ of $K$ such that either $\ip$ is over some prime number $p\in [2,p_0[$ \footnote{This interval does not depend on the Grunwald problem $(G,(L^{\ip}/K_{\ip})_{\ip\in S})$.} or $\ip$ is bad for $F/K(T)$. Here $p_0$ is the prime number defined in \S 2.1 from the group $G$, the branch point number $r$ of $F/K(T)$ and the genus $g$ of $F$. Given a set $S$ of primes of $K$ such that $S \cap S_{exc} = \emptyset$, take $y$ suitably large so that the interval $\displaystyle [p_0,\frac{\log y}{2 \dd \delta^-}]$ contains all prime numbers under all primes of $S$. Applying theorem AB with letting $y$ go to $\infty$ yields infinitely many extensions $L/K$ that are solution to any Grunwald problem $(G,(L^{\ip}/K_{\ip})_{\ip \in S})$.
\medskip

Consider now the general case, i.e., $G$ is not necessarily a regular Galois group over $K$. The definition of $S_{exc}$ relies on results from \cite{debgazi}. A constant $c(G)$ is defined there, for which the following lemma is true.

\begin{lemme}\label{lemmegrunwald}
Given a finite group $G$ and a number field $K$, there exist non negative integers $r$ and $g$ such that with 
$$S_{exc}= \{\ip \text{ prime of } K \hskip 2pt \vert \hskip 2pt p_{\ip} \mid 6|G| \text{ or } p_{\ip} \leq \max(p_0,c(G)) \}$$ 
the following holds. For every finite set $S$ of primes of $K$ with $S \cap S_{exc} =\emptyset$, there exists a finite Galois extension $M/K$ totally split at each prime $\ip \in S$ and a $M$-regular Galois extension $F/M(T)$ of group $G$ such that $F/M(T)$ has $r$ branch points, the genus of $F$ is $g$ and each prime $\mathcal{P}$ of $M$ over a prime $\ip \in S$ is good for $F/M(T)$.
\end{lemme}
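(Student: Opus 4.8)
The statement essentially repackages the descent mechanism of \cite{debgazi}, and the plan is to reduce to it.

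First I would fix, once and for all, the data attached to $G$. By a classical result (used already in \cite{debgazi}), $G$ is a regular Galois group over some number field; fix a number field $K_0$ and a $K_0$-regular Galois extension $F_0/K_0(T)$ of group $G$, and let $r$ be its number of branch points and $g$ the genus of $F_0$. These two integers depend only on $G$ together with the fixed choice of $F_0$; they are the integers $r,g$ of the statement, and they are also the ones feeding into the prime $p_0$ attached by \S 2.1 to $K$, $G$ and these $r,g$, and into the constant $c(G)$ of \cite{debgazi}. With $r,g$ so fixed, define $S_{exc}$ by the displayed formula.

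Next, given a finite set $S$ of primes of $K$ with $S\cap S_{exc}=\emptyset$, I would produce the pair $(M,F/M(T))$ as follows. Base-changing $F_0/K_0(T)$ to the compositum $K_0K$ yields a $K_0K$-regular Galois extension of group $G$ with the same $r$ branch points and the same genus $g$, regularity and these invariants being insensitive to a constant base change. Then I would run the construction of \cite{debgazi}: starting from this extension over $K_0K$ and from the set $S$, one builds a finite Galois extension $M/K$ totally split at each $\ip\in S$, together with a twisted, descended $M$-regular Galois extension $F/M(T)$ of group $G$ still having $r$ branch points and genus $g$. The hypothesis $S\cap S_{exc}=\emptyset$ is exactly what makes this work cleanly: discarding the primes $\ip$ with $p_{\ip}\mid 6|G|$ keeps us in the tame, generic range (the $2$ and $3$ being the usual exceptional primes of the twisting arguments, the $|G|$ ensuring tameness), while discarding the primes $\ip$ with $p_{\ip}\leq\max(p_0,c(G))$ keeps us above the explicit threshold $c(G)$ of \cite{debgazi} beyond which the construction introduces no ramification at $\mathcal{P}\mid\ip$, and also in the range where the prime $p_0$ of \S 2.1 is relevant. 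By the explicit control of the bad locus in \cite{debgazi}, every bad prime of the resulting $F/M(T)$ then lies over a prime number dividing $6|G|$ or $\leq\max(p_0,c(G))$; hence every prime $\mathcal{P}$ of $M$ over an $\ip\in S$ is good for $F/M(T)$ in the sense of \S 2.2.

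Finally I would record that nothing else remains: $M/K$ is Galois and totally split at $S$ by construction, $F/M(T)$ is $M$-regular Galois of group $G$ with $r$ branch points and genus $g$ by construction, and the good-prime condition at the primes $\mathcal{P}\mid\ip$, $\ip\in S$, was secured in the previous step. I expect the main obstacle to be precisely that previous step: simultaneously forcing $M/K$ to be totally split at $S$ and $G$ to be regular over $M$ with its ramification confined, over $\Q$, to primes dividing $6|G|$ or below $\max(p_0,c(G))$. These two requirements pull in opposite directions (the second naturally wants $M$ large enough to realize $G$ regularly, the first wants $M$ split at $S$), and reconciling them is exactly where one must invoke the twisted-cover technology of \cite{debgazi} and keep track of the explicit constant $c(G)$, rather than argue by hand.
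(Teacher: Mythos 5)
The paper does not actually prove this lemma; it simply cites \S 5 of the Dèbes--Ghazi paper \cite{debgazi} as the source of the proof. Your proposal is essentially the same move, fleshed out with a plausible sketch of what the Dèbes--Ghazi construction does (fix a $K_0$-regular realization, base change, twist and descend to an $M/K$ totally split at $S$, with the $S_{exc}$ condition controlling where bad primes can land), so you are taking the same approach as the paper while being somewhat more explicit about the shape of the deferred argument.
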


Here $p_0$ is the prime number defined in \S 2.1 from $K,G$ and the integers $r$, $g$ from the statement.
\smallskip

A proof of this lemma is given in \S 5 of \cite{debgazi}.
\medskip

As in theorem B, let then $(G,(L^{\ip}/K_{\ip})_{\ip\in S})$ be an unramified Grunwald problem  over $K$ with $S \cap S_{exc} =\emptyset$. Let $M/K$ be the extension given by lemma \ref{lemmegrunwald} for this $S$. Consider next the Grunwald Problem over the field $M$ deduced by the base changes $M_{\mathcal{P}}/K_{\ip}$, $\ip \in S$. The first case applied with $(G,(L^{\ip}M_{\mathcal{P}}/M_{\mathcal{P}})_{\mathcal{P}\in S_M})$ produces an infinite number of $M$-solutions to the Grunwald problem $(G,(L^{\ip}/K_{\ip})_{\ip\in S})$. More specifically, note that if $\mathcal{P}\in S_M$, then $\mathcal{P}$ is unramified in $M/\Q$, $p_{\mathcal{P}} > p_0(r,g,G)=p_0(F/M(T))$ (because $S \cap S_{exc}=\emptyset$) and $\mathcal{P}$ is good for $F/M(T)$ (from lemma \ref{lemmegrunwald}): thus if $\mathcal{P} \in S_M$, $\mathcal{P}$ is not in the exceptional set of the first case for $F/M(T)$. This proves theorem B.

\subsection{A generalization of theorem AB to not necessarily Galois extensions.}
Denote by $S_n$ the permutation group on $n$ letters $1,\cdots,n$. For an extension $E/K$ of degree $n$, we denote by $\hat{E}/K$ its Galois closure. The Galois group ${\rm Gal}(\hat{E}/K)$ acts transitively on the $n$ embeddings $E\hookrightarrow \overline{K}$. Let $G(1)\subset G$ be the stabilizing subgroup of the neutral element $1$. We say that the extension $E/K$ has {\it Galois group $G\subset S_n$} if $G$ is the Galois group of $\hat{E}/K$ and $E$ is the fixed field of $G(1)$ in $\hat{E}$. Consider the number 

$$N(K,G \subset S_n,y)=\#\{E/K \mid E/K \textit{ of Galois group } G\subset S_n,\hskip 2pt |N_{K/\Q}(d_{E/K})|\leq y\}.$$  

\begin{theorem}\label{generalcase}
If $G$ is a regular Galois group over $K$, then there exists $\alpha >0$ such that  $$N(K,G \subset S_n,y)\geq y^{\alpha} \text{ for every suitably large } y$$
where $\alpha =(1-1/|G|)/{\delta}$ with $\delta > \delta(P)$ and $P(T,Y)$ an affine model of some $K$-regular Galois extension $F/K(T)$ of group $G$.
\end{theorem}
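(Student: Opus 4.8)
The plan is to deduce this from Theorem AB by passing from each Galois specialization $F_{t_0}/K$ to its degree-$n$ subextension cut out by the point stabilizer $G(1)$. Fix a $K$-regular Galois extension $F/K(T)$ of group $G$, an affine model $P(T,Y)$ of it, and $\delta>\delta_P$; fix also the given transitive permutation representation $G\subset S_n$, with point stabilizer $G(1)$, and note that $G(1)$ is core-free because this action is faithful. Recall from \S 2.3 that, with $B=y^{1/\dd\delta^-}$, Theorem AB provides for $y$ large (and for any Frobenius data, so in particular for the trivial one $\frobp=G$ for all $\ip$, which imposes no constraint) at least $y^{(1-1/|G|)/\delta}$ pairwise distinct Galois extensions $F_{t_0}/K$ of group $G$ arising by specialization at points $t_0\in O_K$ with $H(t_0)\le B$, $\Delta_P(t_0)\neq0$, and $|N_{K/\Q}(d_{F_{t_0}/K})|\le y$. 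These are the extensions I would work with.

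For each such $t_0$, let $E_{t_0}$ be the fixed field, inside $F_{t_0}$, of the image of $G(1)$ under a specialization isomorphism $\mathrm{Gal}(F_{t_0}/K)\xrightarrow{\sim}G$. This is well defined, since the isomorphism is canonical up to conjugation by elements of $G$, which only replaces $G(1)$ by a conjugate. Then $[E_{t_0}:K]=[G:G(1)]=n$; since $G(1)$ is core-free, the Galois closure $\hat E_{t_0}$ is $F_{t_0}$, whose group is $G$; and via the isomorphism the action of $\mathrm{Gal}(F_{t_0}/K)$ on the $n$ embeddings $E_{t_0}\hookrightarrow\overline K$ is the action of $G$ on $G/G(1)$, i.e. the prescribed embedding $G\subset S_n$. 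Hence $E_{t_0}/K$ has Galois group $G\subset S_n$ in the sense of \S 2.5. The discriminant bound transfers through the tower formula $d_{F_{t_0}/K}=d_{E_{t_0}/K}^{[F_{t_0}:E_{t_0}]}\,N_{E_{t_0}/K}(d_{F_{t_0}/E_{t_0}})$, which gives $|N_{K/\Q}(d_{E_{t_0}/K})|^{[F_{t_0}:E_{t_0}]}\le |N_{K/\Q}(d_{F_{t_0}/K})|\le y$, hence $|N_{K/\Q}(d_{E_{t_0}/K})|\le y$.

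Finally, distinct $F_{t_0}$ give distinct $E_{t_0}$: if $E_{t_0}=E_{t_0'}$ then, taking Galois closures, $F_{t_0}=\hat E_{t_0}=\hat E_{t_0'}=F_{t_0'}$. So the fields $E_{t_0}$ produced above are pairwise distinct, at least $y^{(1-1/|G|)/\delta}$ in number, all of Galois group $G\subset S_n$ and of discriminant norm at most $y$; being among the extensions counted by $N(K,G\subset S_n,y)$, this yields the claimed bound with $\alpha=(1-1/|G|)/\delta$. I do not expect a genuine obstacle here: everything rests on Theorem AB, and the remaining work — reading off the invariants of $E_{t_0}$, transferring the discriminant estimate, and the injectivity of $F_{t_0}\mapsto E_{t_0}$ — is routine Galois theory. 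The only point that needs a word, should one prefer to realize $E_{t_0}$ as the specialization of the (in general non-Galois) subextension of $F/K(T)$ fixed by $G(1)$, i.e. of the function field of the intermediate cover $X\to X/G(1)\to\PP$, is that for the good $t_0$ furnished by Theorem \ref{SPECTHM} this quotient cover also has irreducible fiber over $t_0$, which is the same good-reduction/branch-cycle argument already applied to $f$ itself.
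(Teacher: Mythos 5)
Your proof is correct and takes essentially the same route as the paper: pass from each Galois specialization $F_{t_0}/K$ produced by Theorem AB to the fixed field $E_{t_0}$ of $G(1)$, note that $G(1)$ is core-free so that $\hat E_{t_0}=F_{t_0}$, transfer the discriminant bound, and conclude by injectivity. You merely spell out two steps the paper leaves terse (the discriminant tower formula and the injectivity of $F_{t_0}\mapsto E_{t_0}$); the one cosmetic wrinkle is that $E_{t_0}$ is canonical only up to $K$-isomorphism, not as a subfield of $\overline K$, but since one needs only a single choice per $F_{t_0}$ this does not affect the argument.
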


\begin{rmq}
The special case $G(1)=\{1\}$ corresponds to the case the action $G \subset S_n$ is free and transitive, equivalently the extension $E/K$ is Galois of group $G$. As the proof below shows, we will deduce the general case from this special case.
\end{rmq}

\begin{proof}
To every Galois extension $N/K$ of group $G$ corresponds one intermediate extension $E/K$ which satisfies $E=N^{G(1)}$ and $\hat{E}=N$. Furthermore, this extension is of degree $n$.
\smallskip

%Indeed, as $G$ is a transitive permutation group, the orbit of $\{1\}$ is of cardinal $n$. La formule des classes implique que $G(1)$ est d'indice $n$. L'extension $N^{G(1)}/K$ fixée par $G(1)$ est donc de degré $n$.\smallskip
The only point to be checked is that $\hat{E}=N$. Let $H$ be the biggest normal subgroup of $G$ that is contained in $G(1)$. The Galois closure of $N^{G(1)}$ is $N^{H}$. But as $H$ is a normal subgroup, for every $g\in G$, we have $H=H^g \subset G(1)^g=G(i)$ (where $i=g(1)$). Thus, we have $ H \subset \displaystyle \bigcap_{i=1}^n G(i)$ and $H$ is then the trivial subgroup. Hence $N^{H}=N$.
\smallskip

The proof of theorem \ref{generalcase} easily follows : the Galois extensions $N/K$ provided by theorem A (or by theorem AB) provide as many extensions $E/K$ as requested in the general case. Note that the norm $N_{K/\Q}(d_{E/K})$ is less than $N_{K/\Q}(d_{N/K})$.
\end{proof}

\section{Proof of the diophantine theorem C and corollary C} \label{sectiondiophantine}
In this section, we prove theorem C and corollary C. We work over a fixed number field $K$ of degree $[K:\Q]=\dd$.

\subsection{Basic data and generalized Heath-Brown result.}

\subsubsection{The height.}
Recall that $M_K$ is the set of places of $K$ and for $v\in M_K$, denote by $K_v$ the completion of $K$ for $v$, by $O_v$ its valuation ring, and by $\Q_v$ the completion of $\Q$ for $v$ ($\Q_p$ for a finite place and $\R$ for an archimedean place). The places are normalized in such a way they are equal to the usual absolute value on $\Q_v$. We denote by $\dd_v$ the degree $[K_v:\Q_v]$.
\smallskip

The height of $x \in O_K$ is $H(x)=\displaystyle \max_{\sigma : K\hookrightarrow \overline{K}} |\sigma(x)|=\max_{\substack{v\in M_K \\ v/\infty}}|x|_v.$

\begin{rmq}
If $x\neq 0$, then $H(x)=\displaystyle \max_{\sigma : K\hookrightarrow \overline{K}} \max(1,|\sigma(x)|)=\max_{\substack{v\in M_K \\ v/\infty}}(1,|x|_v)$. \\
Indeed, it is classical that if $x\in O_K$, $x\neq 0$ and $H(x)\leq 1$ then $x$ is a $k$-th root of $1$ for some $k\geq 1$.
\end{rmq}

We generalize the height to elements of $K$ and to tuples as follows :
\begin{itemize}
\item for $x\in K$, $H(x)=\displaystyle \max_{v\in M_K} \max(1,|x|_v)$
\item for $\underline{a}=(a_1,\cdots,a_n) \in K^n$, $H(a)= \displaystyle \max_{v\in M_K} H_v(\underline{a})$, where $H_v(\underline{a})=\displaystyle \max(|a_1|_v,\cdots,|a_n|_v)$, \\ and $H^+(\underline{a})=\displaystyle \max_{v\in M_K} H_v^+(\underline{a})$, where $H_v^+(\underline{a})=\displaystyle \max(1,|a_1|_v,\cdots,|a_n|_v)$
\end{itemize}

Note that for $x\in O_K$, $H(x)=H(1,x)=H^+(x)$. \smallskip

The height of a polynomial $P$ with coefficients $a_1,\cdots,a_n$ in $K$ is $H(P)=H(a_1,\cdots,a_n)$.

\subsubsection{Preliminary lemmas.}
The following notation and properties are used all along this section. For theorem C, we consider a polynomial $F(X_1,X_2) \in O_K[X_1,X_2]$. \smallskip

For corollary C, we prefer to denote the indeterminates by $T$ and $Y$, as they do not play the same role. \smallskip

 Both polynomials are assumed to be irreducible over $K$. We let
\begin{itemize}
\item $m$ be the degree of $F$ in $X_1$ (or in $T$),
\item $n$ be the degree of $F$ in $X_2$ (or in $Y$),
\item $d$ be the total degree of $F$ (we may and will assume that $d\geq 2$).
%\item $d_K$ is the discriminant of $K/\Q$
%\item $\Delta_K$ the discriminant of $K/\Q$
% \item pour $v$ une valuation sur $K$, $\delta_v$ le degré $[K_v:\Q_v]$
%\item $H(x)$ is the Weil height of an algebraic number $x$ (or a tuple)
%\item $h=\max (H(F),e^e)$ ($H(F)$ is the height of the polynomial $F$)\\
\end{itemize} 
\medskip

The following statement collects different properties used in this paper.
\begin{prop} \label{hauteval}
Let $\underline{a}=(a_1,\cdots,a_m)$ be a $m$-tuple in $K^m$ ($m\in \N$), let $P(\underline{X}) \in K[\underline{X}]=K[X_1,...,X_n]$ be a polynomial in $n$ variables with $l$ non-zero coefficients and $(x_1,\cdots,x_n) \in K^n$. Let $t\in K$, $t\neq 0$, and $\sigma : K \rightarrow \overline{\Q}$ be a field morphism. Then we have
\begin{enumerate}
%\item $H(t.\underline{a})=H(\underline{a})$ and $H(t)=H(1/t)$. FAUX AVEC LA MAISON
\item $H(\underline{a})=H(\sigma(\underline{a}))$.
\item $ H(a_i) \leq H^+(\underline{a})\leq H(a_1)\cdots H(a_m). ~~ (i= 1,...,m).$
\item $H(P(x_1,\ldots,x_n)) \leq \displaystyle l.H^+(P).\prod_{i=1}^n H(x_i)^{\deg_{X_i}(P)}.$
\end{enumerate}
\end{prop}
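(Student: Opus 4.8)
The plan is to prove the three estimates one at a time. Items (2) and (3) reduce to comparing, place by place, the local quantities $H_v$ and $H_v^+$ entering the definitions and then taking the maximum over $v\in M_K$; they are purely formal. Item (1) is the one requiring a little care, and its content is simply that the absolute values, normalized so as to restrict to the standard ones on $\Q$, are transported correctly by the field morphism $\sigma$.

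For (1), note first that $\sigma$, being a field morphism between number fields, is an isomorphism $\sigma:K\xrightarrow{\sim}\sigma(K)$, and one computes $H(\sigma(\underline a))$ over the number field $\sigma(K)$. To each place $v'$ of $\sigma(K)$ associate the map $x\mapsto|\sigma(x)|_{v'}$ on $K$: this is an absolute value on $K$, and it restricts to a standard absolute value on $\Q$ because $\sigma$ fixes $\Q$ pointwise, so it is a place $v=v'\circ\sigma$ of $K$. The assignment $v'\mapsto v'\circ\sigma$ is a bijection $M_{\sigma(K)}\to M_K$ (with inverse $v\mapsto v\circ\sigma^{-1}$), and by construction $|\sigma(a_i)|_{v'}=|a_i|_v$ for every $i$, hence $H_{v'}(\sigma(\underline a))=H_v(\underline a)$. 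Taking the maximum over all $v'$, equivalently over all $v$, gives $H(\sigma(\underline a))=H(\underline a)$. (The same observation shows, and is used implicitly elsewhere, that $H$ does not depend on the number field over which it is computed: enlarging the base field replaces each $v$ by finitely many places giving the same values $|a_i|_v$.)

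For (2), fix $v$. Since $H_v^+(\underline a)=\max(1,|a_1|_v,\dots,|a_m|_v)\ge\max(1,|a_i|_v)$, taking $\max_v$ gives $H^+(\underline a)\ge H(a_i)$. For the second inequality, at a fixed $v$ either the maximum defining $H_v^+(\underline a)$ is $1$, and then it is $\le\prod_{j=1}^m\max(1,|a_j|_v)$ since each factor is $\ge1$, or it equals some $|a_i|_v\ge1$, which is $\le\prod_{j=1}^m\max(1,|a_j|_v)$ because the remaining factors are $\ge1$; in both cases $H_v^+(\underline a)\le\prod_{j=1}^m\max(1,|a_j|_v)\le\prod_{j=1}^m H(a_j)$, and $\max_v$ gives the claim. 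For (3), write $P(\underline X)=\sum_e c_e\underline X^e$ over the $l$ exponent vectors $e=(e_1,\dots,e_n)$ with $c_e\neq0$, so that $P(x_1,\dots,x_n)=\sum_e c_e\prod_i x_i^{e_i}$. Fix $v$: if $v$ is non-archimedean the ultrametric inequality gives $|P(\underline x)|_v\le\max_e(|c_e|_v\prod_i|x_i|_v^{e_i})\le(\max_e|c_e|_v)\prod_i\max(1,|x_i|_v)^{\deg_{X_i}(P)}$ (using $e_i\le\deg_{X_i}(P)$ and $\max(1,|x_i|_v)\ge1$); if $v$ is archimedean the triangle inequality, valid with constant $1$ under the chosen normalization, gives the same bound times an extra factor $l$. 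In either case, using $\max(1,lAB)\le l\max(1,A)\max(1,B)$ (valid since $l\ge1$ and $B:=\prod_i\max(1,|x_i|_v)^{\deg_{X_i}(P)}\ge1$) together with $\max(1,\max_e|c_e|_v)=H_v^+(P)$, one obtains $\max(1,|P(\underline x)|_v)\le l\,H_v^+(P)\prod_i\max(1,|x_i|_v)^{\deg_{X_i}(P)}$; bounding each factor on the right by its maximum over $v$ and taking $\max_v$ yields $H(P(x_1,\dots,x_n))\le l\,H^+(P)\prod_{i=1}^n H(x_i)^{\deg_{X_i}(P)}$.

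The only genuine obstacle is the bookkeeping in (1): it is essential that this ``house''-type height is built from the absolute values normalized to restrict to the standard ones on $\Q$, not from the local-degree-weighted absolute values $|\cdot|_v^{\dd_v}$ attached to the product formula, since it is precisely this choice that makes $H$ insensitive to the base field and an isometry under field morphisms. Given that, (2) and (3) are elementary manipulations of maxima over places.
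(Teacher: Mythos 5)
Your proof is correct and follows essentially the same route as the paper: for each item, compare the local quantities place by place and then take the maximum over $v\in M_K$. The paper dismisses item (1) as ``clear'' and leaves implicit the step $\max(1,lAB)\le l\max(1,A)\max(1,B)$ in item (3), whereas you spell both of these out --- in particular you make explicit that a field embedding $\sigma$ fixing $\Q$ induces a bijection $M_{\sigma(K)}\to M_K$, $v'\mapsto v'\circ\sigma$, under which the local values of $\underline a$ and $\sigma(\underline a)$ agree, and you note correctly that the house-type normalization (ordinary absolute values, not the local-degree-weighted ones) is what makes this work. These additions are elaborations, not a different method.
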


\begin{proof}
$1.$ is clear.
\smallskip

$2.$ Using the definition, we have $$H(a_i) = \max_{v\in M_K} \max(|a_i|_v) \leq \max_{v\in M_K} \max(1,|a_1|_v,...,|a_n|_v)\leq \prod_{i=1}^m \max_{v\in M_K} \max(1,|a_i|_v).$$

$3.$ We write $\displaystyle P(\underline{X})=\sum p_{\underline{a}}\underline{X}^{\underline{a}}=\sum p_{a_1,...a_n}X_1^{a_1}...X_n^{a_n}$ and set $d_i=\deg_{X_i}(P)$, $i=1,\cdots n$.
\begin{itemize}
\item for every archimedean place $v$, we have $$|P(\underline{x})|_v \leq l.\max_{\underline{a}}(|p_{\underline{a}}|_v) \max(1,|x_1|_v)^{d_1}...\max(1,|x_n|_v)^{d_n}.$$
\item for every finite place $v$, we have $$|P(\underline{x})|_v \leq \max_{\underline{a}}(|p_{\underline{a}}|_v) \max(1,|x_1|_v)^{d_1}...\max(1,|x_n|_v)^{d_n}.$$
\end{itemize}

Whence 
\begin{align*}
H(P(\underline{x}))&=\max_{v\in M_K} \max(1,|P(\underline{x})|_v)\\
& \leq l. \max_{v\in M_K} \max_{\underline{a}}(1,|p_{\underline{a}}|_v) . \max_{v\in M_K} \max(1,|x_1|_v)^{d_1} \cdots\times \max_{v\in M_K}\max(1,|x_n|_v)^{d_n}\\
\end{align*}
\end{proof}

\textbf{Ideals in $O_K$ and norm}. The norm of an ideal $J\subset O_K$ is the cardinality of the quotient ring $N_{K/\Q}(J)=\#O_K/J$ (see \cite{samuel} for more details).
\smallskip
For $a\in O_K$, $$N_{K/\Q}(aO_K)=|N_{K/\Q}(a)|=\displaystyle \prod_{\sigma : K \rightarrow \overline{\Q}}|\sigma(a)| \leq H(a)^{\dd}.$$

We can now prove the inequality 
$$ |N_{K/\Q}(\Delta_P(t_0)| \leq C B^{\dd \delta_P} \text{ if } H(t_0)\leq B$$ 
stated in the proof of proposition \ref{Benfctdey}.

\begin{proof}
Using the inequality between norm and height, we obtain : 
$$|N_{K/\Q}(\Delta_P(t_0))| \leq H(\Delta_P(t_0))^{\dd}$$
Then, as $\Delta_P$ is a polynomial of degree $\delta_P$ and has at most $1+\delta_P$ non-zero coefficients. Proposition \ref{hauteval} $(3)$ yields
\begin{align*}
|N_{K/\Q}(\Delta_P(t_0))| & \leq [(1+\delta_P)H^+(\Delta_P)H(t_0)^{\delta_P}]^{\dd} \\
& \leq (1+\delta_P)^{\dd} H^+(\Delta_P)^{\dd} B^{\delta_P \dd}.
\end{align*}
\end{proof}

We will also use the following result.
\begin{lemme}\label{normid}
Let $a \in O_K$. The number of primes $\mathfrak{p}$ of $K$ which divide the ideal $aO_K$ is less than or equal to $\dd \log_2(H(a)).$
\end{lemme}
\begin{proof}
Let $\ip_1,...,\ip_n$ be the prime ideals of $O_K$ dividing $aO_K$. As $O_K$ is a Dedekind domain, we have $aO_K=\ip_1^{\alpha_1}...\ip_n^{\alpha_n}$, where $\alpha_1, \cdots,\alpha_n$ are positive integers. Then 
$$H(a)^{\dd} \geq |N_{K/\Q}(a)|=\prod_{i=1}^nN_{K/\Q}(\ip_i)^{\alpha_i}.$$
As $N_{K/\Q}(\ip_i) \geq 2$, $i=1,\cdots,n$, we obtain $H(a)^{\dd} \geq 2^n$, thus proving the lemma.
\end{proof}

%%%%%%%%%%%%%%%%%%%%%%%%%%%%%%%%%%%%%%%%%%%%%%%%%%%%%%%%%%%%%%%%%%%%%%%%%%%%%%%%%%%%%%%%%%
%%%%%%%%%%%%%%%%%%%%%%%%%%%%%%%%%%%%%%%%%%%%Partie 2%%%%%%%%%%%%%%%%%%%%%%%%%%%%%%%%%%%%%%
%%%%%%%%%%%%%%%%%%%%%%%%%%%%%%%%%%%%%%%%%%%%%%%%%%%%%%%%%%%%%%%%%%%%%%%%%%%%%%%%%%%%%%%%%%%

\subsubsection{A generalized Heath-Brown result.}
For every real number $B>0$, set
$$R(F,B)=\{(x_1,x_2) \in O_K^2 ~ \vert ~ F(x_1,x_2)=0,~ H(x_1)\leq B ,~ H(x_2)\leq B \},$$ 
and $$N(F,B)=\#R(F,B).$$

Our approach for bounding the number $N(F,B)$ follows an idea of Heath-Brown \cite{heathbrown1} which Walkowiak made effective (both in the case $K=\Q$).  We generalize to the case of an arbitrary number field $K$. The method consists in splitting the set $R(F,B)$ in $k$ subsets, each being the zero set of some polynomial $F_i\in O_K[X_1,X_2]$ relatively prime with $F$, $i=1,\cdots,k$. The Bezout theorem then yields the desired bound for $N(F,B)$. An important point is to have a good upper bound for the number $k$ of polynomials $F_i$. To this end, we prove the following effective generalized Heath-Brown result.

\begin{theorem} \label{HBE}
Let $F(X_1,X_2) \in O_K[X_1,X_2]$ be a polynomial, irreducible in $\overline{K}[X_1,X_2]$ of degree $d$, let $D\geq d$ be an integer and let $B$ be a suitably large real number (depending on $\dd$, $d$, $D$). There exist a number $k\geq 1$ and some polynomials $F_1,...,F_k \in O_K[X_1,X_2]$ relatively prime with $F$ in $\overline{K}[X_1,X_2]$ and of degree $\deg(F_i)\leq D$, such that every point $(x_1,x_2) \in R(F,B)$ is a zero of at least one of $F_1,\ldots,F_k$. Furthermore, the integer $k$ is bounded from above by :
$$ c_2 \hskip 2pt d^3 \log^2(d^3H^+(F)B^{d-1}) \hskip 2pt (B^{d^{-1}+6D^{-1}})^\dd $$ where $c_2$ is a constant depending only on $K$.
\end{theorem}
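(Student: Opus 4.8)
The plan is to adapt the Heath--Brown determinant (``slicing'') method \cite{heathbrown1}, in the effective form worked out by Walkowiak \cite{walarticle}, to the number field $K$.

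\textbf{Monomials.} Fix the parameter $D$. Since every point we must cover lies on $F=0$, I would not use all monomials of degree $\le D$ but only $s:=\dim_{K}\bigl(K[X_1,X_2]/(F)\bigr)_{\le D}$ of them, $\phi_1,\dots,\phi_s$, chosen of smallest possible degrees and $K$-linearly independent modulo $F$; a short exact sequence computation gives $s=dD-\tfrac{d(d-3)}{2}$ and $\Sigma:=\sum_i\deg\phi_i=\tfrac{dD^2}{2}+O(d^3)$, so $\Sigma/\binom s2=\tfrac1d+O(\tfrac1D)$. Any nonzero relation $\sum_i c_i\phi_i(P_j)=0$ ($j=1,\dots,s$) produces a curve $\sum_i c_i\phi_i=0$ of degree $\le D$ which, being non-zero in $K[X_1,X_2]/(F)$, is not a multiple of $F$, hence coprime to $F$ in $\overline K[X_1,X_2]$ because $F$ is absolutely irreducible.

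\textbf{Auxiliary prime and partition.} Choose a prime ideal $\mathfrak p$ of $O_K$ of residue degree $1$, with $N(\mathfrak p)=p$ of size roughly $B^{\dd/d+6\dd/D}$, avoiding the finitely many primes of ``bad reduction'' of $F$ (those dividing a suitable resultant of $F$, whose number is controlled by Lemma~\ref{normid}); such a $\mathfrak p$ exists for $B$ large by an effective form of Chebotarev/the prime number theorem for $K$. Reducing $F$ modulo $\mathfrak p$ gives an absolutely irreducible plane curve $\bar C\subset\mathbb A^2_{\F_p}$ with at most $dp$ rational points and at most $(d-1)(d-2)$ singular points. Partition $R(F,B)$ into residue classes modulo $\mathfrak p$; call a class \emph{good} if it reduces to a smooth point of $\bar C$.

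\textbf{The determinant estimate on a good class.} For $s$ points $P_1,\dots,P_s=(x_1^{(j)},x_2^{(j)})$ in a good class, put $\Delta=\det\bigl(\phi_i(P_j)\bigr)_{i,j}\in O_K$. Two bounds:
for every embedding $\sigma\colon K\hookrightarrow\C$ one has $|\sigma(x_1^{(j)})|,|\sigma(x_2^{(j)})|\le B$, hence $|\sigma(\Delta)|\le s!\,B^{\Sigma}$, so $|N_{K/\Q}(\Delta)|=\prod_\sigma|\sigma(\Delta)|\le (s!\,B^{\Sigma})^{\dd}$;
on the other hand, since the class reduces to a smooth point, Hensel's lemma over the completion $O_{\mathfrak p}$ furnishes a local parameter $t$ along the curve in which each monomial becomes a power series $h_i(t)=\sum_k c_{ik}t^k$ with $c_{ik}\in O_{\mathfrak p}$, while each $P_j$ corresponds to a value $t_j$ with $v_{\mathfrak p}(t_j)\ge1$; expanding $\Delta=\sum_{k_1,\dots,k_s}\bigl(\prod_j t_j^{k_j}\bigr)\det(c_{i,k_j})$ and discarding the summands with a repeated index (so $\sum_j k_j\ge\binom s2$) yields $v_{\mathfrak p}(\Delta)\ge\binom s2$. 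Therefore $N(\mathfrak p)^{\binom s2}$ divides $|N_{K/\Q}(\Delta)|$ when $\Delta\neq0$; but the choice of $p$ makes $N(\mathfrak p)^{\binom s2}>(s!\,B^{\Sigma})^{\dd}$ (here the estimate $\Sigma/\binom s2=\tfrac1d+O(\tfrac1D)$ is what produces the exponent $\dd/d+6\dd/D$), so $\Delta=0$. A routine linear-algebra argument then forces the whole good class onto a single degree $\le D$ curve coprime to $F$. The points that reduce to a singular point of $\bar C$ for \emph{every} admissible $\mathfrak p$ are eliminated by iterating the construction over $O\!\bigl(\dd\log(d^3H^+(F)B^{d-1})\bigr)$ primes: by Lemma~\ref{normid} together with Proposition~\ref{hauteval}(3) applied to $\partial_iF(x_1,x_2)$, each point can be singular for only that many of them, so each point ends up in a good class for at least one chosen prime (the $\le 2d(d-1)$ points with $\partial_iF(x_1,x_2)=0$ are placed on the curves $\partial_iF=0$, of degree $\le d-1\le D$).

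\textbf{Counting and the main obstacle.} Each of the $O\!\bigl(\dd\log(d^3H^+(F)B^{d-1})\bigr)$ primes contributes at most $dp\le 2dB^{\dd/d+6\dd/D}$ curves, and $p$ stays within a constant factor of the target value (no large prime gap at that scale); assembling these estimates and absorbing the polynomial-in-$d$ and logarithmic losses, as in Walkowiak's accounting, gives the stated bound on $k$. The hard part — the genuinely new input over $\Q$ — is twofold: first, the divisibility $v_{\mathfrak p}(\Delta)\ge\binom s2$ must be carried out in the completion $O_{\mathfrak p}$ of a number field rather than in $\Z_p$, which demands a clean Hensel/local-parameter set-up along the curve and control of the reduction $\bar C$ (integrality and number of singular points); second, the archimedean side now involves a product over all $\dd$ real and complex embeddings, which turns Heath--Brown's exponent $1/d$ into $\dd/d$ and makes the existence of an auxiliary prime \emph{ideal} of prescribed norm and residue degree over $K$ — available only through effective Chebotarev — a load-bearing ingredient of the argument.
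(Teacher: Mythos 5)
Your proposal follows the same Heath--Brown/Walkowiak determinant method the paper uses (cf.\ Lemmas~\ref{lemmeSFBP} and \ref{LFI}, Proposition~\ref{p1}): partition $R(F,B)$ into residue classes modulo degree-one primes of norm about $(B^{1/d+6/D})^{\dd}$, show that any $s\times s$ evaluation determinant has $\ip$-valuation at least $\binom{s}{2}$ via a Hensel local parameter while its norm is at most $(s!\,B^{\Sigma})^{\dd}$, and iterate over roughly $\dd\log(d^3H^+(F)B^{d-1})$ primes so that every point lands in a good class for at least one of them. Your minor deviations---a monomial basis of $(K[X_1,X_2]/(F))_{\le D}$ with coprimality read off linear independence modulo $F$ rather than the paper's Newton-polygon monomial set $\mathcal{E}$, and the trivial bound $dp$ on affine points instead of Lang--Weil---are all correct; the one glossed step is the ``no large prime gap'' assertion, where the paper instead settles for $N(\ip_i)\le C_1\,h(B)^2\,\frac{P}{\log P}\log(\frac{P}{\log P})$ from its effective Chebotarev count, which is precisely where the $\log^2$ factor in the stated bound for $k$ comes from.
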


\begin{rmq}
This theorem is true for all integers $D\geq d$. In the proof of theorem C, we will use it with $D=[d\log(B)+1]$ (where $[.]$ is the integral part of a real number).
\end{rmq}

\subsection{Proof of theorem \ref{HBE}} \label{sectionpreuveHB}
Fix $D \geq d$ and $B>0$. The condition that $B$ should be suitably large appears in \S3.2.3. We explain below how to construct the polynomials $F_1, \cdots , F_k$, that appear in theorem \ref{HBE}.
\smallskip

We take one of the polynomials $F_i$, $i=1,\cdots,k$ to be $\displaystyle \frac{\partial F}{\partial X_1}$; it is relatively prime to $F$ and of degree $\leq d$. So we may next focus on the subset  
$$S(F,B)=\{\underline{x} \in R(F,B) \mid ~ \frac{\partial F}{\partial X_1}(\underline x)\neq 0\} \subset R(F,B),$$
and look for $k'$ polynomials $F_i$ to cover this subset. The number $k$ in theorem \ref{HBE} will be equal to $1+k'$.

\subsubsection{First step : constructing subsets $S(F,B,\ip) \subset S(F,B)$.}
Let $\ip$ be a prime ideal of $O_K$ and $$S(F,B,\ip)=\{\underline{x} \in S(F,B) \mid \displaystyle \frac{\partial F}{\partial X_1}(\underline x) \notin \ip \}.$$ We have $$S(F,B)=\displaystyle \bigcup_{\ip \text{ prime of } K}S(F,B,\ip).$$
The following lemma shows that one can take finitely many primes $\ip$ in the previous union and that these primes can be chosen to be totally split in $K/\Q$.

%%%%%%%%%%%%%%%%%%%%%%%%%%%%%%%%%%%%%%%%%%%%%%%%%%%%%lemme 2.4 %%%%%%%%%%%%%%%%%%%%%%%%%%%%%%%%%%%%%%%%%%%%%%%%%%%%%%%%%%%%%%%%%%%%%%%%%%%%%%%%%%%%%%%%
\begin{lemme}\label{lemmeSFBP}
Let $P$ be an integer, $h(B)=\log_2(d^3H^+(F)B^{d-1})$ and $r=[\log_2 (\dd h(B))]+1$. %(\textit{la notation [.] désignant la partie entière}). 
Then for $P$ suitably large (depending on $B$, $K$), there exist $r$ totally split prime ideals $\ip_1,...,\ip_{r}$ of $K$ such that $\displaystyle S(F,B)= \bigcup_{i=1}^{r} S(F,B,\ip_i)$ and for which we furthermore have

$$P\leq N_{K/\Q}(\ip_i)\leq C_1 h(B)^2 \hskip 3pt \frac{P}{\log P} \hskip 2pt \log(\frac{P}{\log P})$$ 

\noindent for a constant $C_1$ depending on $K$.
\end{lemme}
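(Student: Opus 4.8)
\medskip
\noindent\emph{Proof plan.} The plan is to encode the covering condition arithmetically. For $\underline{x}=(x_1,x_2)\in S(F,B)$ set $a_{\underline{x}}=\frac{\partial F}{\partial X_1}(\underline{x})$; since $F\in O_K[X_1,X_2]$ and $\underline{x}\in O_K^2$ we have $a_{\underline{x}}\in O_K$, and $a_{\underline{x}}\neq 0$ by the very definition of $S(F,B)$. Moreover, for $\underline{x}\in S(F,B)$, the membership $\underline{x}\in S(F,B,\ip)$ is exactly the condition $a_{\underline{x}}\notin\ip$, i.e. $\ip\nmid a_{\underline{x}}O_K$. So the whole task reduces to producing $r$ distinct totally split primes $\ip_1,\dots,\ip_r$, with norms in the announced range, such that no value $a_{\underline{x}}$ is divisible by all of $\ip_1,\dots,\ip_r$ at once.

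\textbf{Step 1 (height of the derivative values).} First I would bound $H(a_{\underline{x}})$. The polynomial $\frac{\partial F}{\partial X_1}$ has at most $\binom{d+1}{2}\le d^{2}$ nonzero monomials, each of total degree $\le d-1$, and $H^{+}\!\bigl(\frac{\partial F}{\partial X_1}\bigr)\le d\,H^{+}(F)$ (differentiation only introduces integer factors $\le d$, harmless at the finite places). Evaluating a monomial of total degree $\le d-1$ at a point $\underline{x}$ with $H(x_1),H(x_2)\le B$ produces an element of height $\le B^{d-1}$, so, splitting $|a_{\underline{x}}|_v$ over the archimedean and the finite places exactly as in the proof of Proposition~\ref{hauteval}(3), one gets $H(a_{\underline{x}})\le d^{3}H^{+}(F)B^{d-1}=2^{h(B)}$. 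The norm--height inequality recalled above then yields $N_{K/\Q}(a_{\underline{x}}O_K)=|N_{K/\Q}(a_{\underline{x}})|\le H(a_{\underline{x}})^{\dd}\le 2^{\dd h(B)}$.

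\textbf{Step 2 (any $r$ large primes automatically cover $S(F,B)$).} This is the soft core of the argument. Let $\ip_1,\dots,\ip_r$ be \emph{distinct} primes of $K$ with $N_{K/\Q}(\ip_i)\ge P$, where $P$ is chosen large enough that $r\log_2 P>\dd\,h(B)$. Then $S(F,B)=\bigcup_{i=1}^{r}S(F,B,\ip_i)$: if some $\underline{x}\in S(F,B)$ lay in none of the $S(F,B,\ip_i)$, then every $\ip_i$ would divide $a_{\underline{x}}O_K$, hence --- the $\ip_i$ being distinct primes --- so would $\ip_1\cdots\ip_r$, and therefore
$$P^{\,r}\ \le\ \prod_{i=1}^{r}N_{K/\Q}(\ip_i)\ =\ N_{K/\Q}(\ip_1\cdots\ip_r)\ \le\ N_{K/\Q}(a_{\underline{x}}O_K)\ \le\ 2^{\dd h(B)},$$
contradicting the choice of $P$. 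Since $\dd h(B)>1$ (because $d\ge 2$) and $r=[\log_2(\dd h(B))]+1>\log_2(\dd h(B))$, it suffices to require $\log_2 P\ge \dd h(B)/\log_2(\dd h(B))$; this is a lower bound for $P$ of the announced type, and the logarithmic size of $r$ is precisely what keeps it mild.

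\textbf{Step 3 (such primes exist in $[P,\,C_1h(B)^{2}\tfrac{P}{\log P}\log(\tfrac{P}{\log P})]$).} This is where I expect the actual work to lie, though it only amounts to tracking constants in classical analytic number theory. Let $\widetilde{K}$ be the Galois closure of $K/\Q$: a rational prime splits completely in $K$ precisely when its Frobenius in $\widetilde{K}/\Q$ is trivial, so by Chebotarev's density theorem (together with the prime number theorem) the number of totally split primes of $K$ of norm $\le x$ is asymptotic to $\frac{\dd}{[\widetilde{K}:\Q]}\cdot\frac{x}{\log x}$. Put $Q=C_1h(B)^{2}\tfrac{P}{\log P}\log(\tfrac{P}{\log P})$. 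For $P$ suitably large one has $\tfrac12 C_1h(B)^{2}P\le Q\le C_1h(B)^{2}P$ and $\log Q\le 2\log P$, hence $Q/\log Q\ge \tfrac14 C_1h(B)^{2}\cdot P/\log P$; consequently the number of totally split primes of norm in $[P,Q]$ is bounded below by a positive constant depending only on $K$ times $\bigl(C_1h(B)^{2}-O(1)\bigr)\,P/\log P$. Choosing $C_1$ large enough in terms of $K$ makes this bound grow like $C_1h(B)^2P/\log P$, and then, taking $P$ large enough (in terms of $K$ and $B$) so that simultaneously this count exceeds $r$ and $\log_2 P\ge \dd h(B)/\log_2(\dd h(B))$, I would pick $\ip_1,\dots,\ip_r$ among these primes. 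They are distinct and totally split, satisfy $P\le N_{K/\Q}(\ip_i)\le Q=C_1h(B)^{2}\tfrac{P}{\log P}\log(\tfrac{P}{\log P})$, and by Step~2 they cover $S(F,B)$, which is exactly the assertion of the lemma. The main obstacle is thus the bookkeeping in Step~3: fixing the constant $C_1$ and reconciling the two lower bounds on $P$ --- the arithmetic one from Step~2 and the analytic threshold coming from Chebotarev and the prime number theorem.
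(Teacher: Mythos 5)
Your proof is correct, and it delivers the lemma exactly as stated, but by a genuinely different mechanism than the paper's own proof. The paper's argument is a direct pigeonhole using Lemma~\ref{normid}: the number of primes dividing $a_{\underline{x}}O_K$ (with $a_{\underline{x}}=\frac{\partial F}{\partial X_1}(\underline{x})$) is at most $\dd\log_2 H(a_{\underline{x}})\le \dd\,h(B)$, so as soon as one has strictly more than $\dd\,h(B)$ totally split primes, each $\underline{x}$ is caught by at least one of them. Consistent with this, the paper's proof internally sets $r=[\dd\,h(B)]+1$, which does not match the $r=[\log_2(\dd\,h(B))]+1$ displayed in the lemma statement; there is plainly a discrepancy between statement and proof on that point. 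You instead make the stated (logarithmic, much smaller) $r$ work via norm multiplicativity: if all $r$ primes of norm $\ge P$ divided $a_{\underline{x}}O_K$, then so would their product, forcing $P^{r}\le N_{K/\Q}(a_{\underline{x}}O_K)\le 2^{\dd h(B)}$, which fails once $r\log_2 P>\dd\,h(B)$. This is a clean trade: you need far fewer primes, but in exchange $P$ must be large \emph{as a function of $B$}, roughly $\log_2 P\gtrsim \dd\,h(B)/\log_2(\dd\,h(B))$, whereas the paper's pigeonhole works as soon as $P$ is large in terms of $K$ alone. The lemma's hypothesis (``for $P$ suitably large, depending on $B$, $K$'') does admit this $B$-dependence, and for the concrete $P\approx(e^{8}B^{d^{-1}+6D^{-1}})^{\dd}$ used in \S3.2.3 your constraint becomes $\log_2(\dd(d-1)\log_2 B)\gtrsim d(d-1)$, i.e.\ it is satisfied once $B$ is large depending on $d$, $\dd$ --- admissible under the hypotheses of Theorem~\ref{HBE}, though it imposes a noticeably larger threshold on $B$ than the paper's route. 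Your Step~3 (locating the primes via Chebotarev for the Galois closure of $K$) is conceptually the same as the paper's, just with a far weaker numerical requirement since your $r$ is logarithmic rather than linear in $h(B)$; the norm range $[P, C_1h(B)^2\frac{P}{\log P}\log(\frac{P}{\log P})]$ you adopt is generous and correct, even if your argument would allow a much tighter one.
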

\begin{proof} Fix $\underline{x}=(x_1,x_2)\in S(F,B)$; we have $\displaystyle \frac{\partial F}{\partial X_1}(\underline x)\neq 0$. The number of prime ideals $\ip$ of $O_K$ such that $\displaystyle \frac{\partial F}{\partial X_j}(\underline x) \in \ip $ is at most $\dd\log_2(H(\displaystyle \frac{\partial F}{\partial X_1}(\underline x)))$ from lemma \ref{normid}. The height of $\displaystyle \frac{\partial F}{\partial X_1}(\underline x)$ can be estimated using proposition \ref{hauteval} : the number $l$ of the non-zero monomials of $ \displaystyle \frac{\partial F}{\partial X_1}$ is bounded by $d(d+1)/2 \leq d^2$, its degree by $d-1$ and its height by $dH^+(F)$, whence 

$$H(\frac{\partial F}{\partial X_j}(x_1,x_2)) \leq l.H^+(\frac{\partial F}{\partial X_j}).B^{d-1} \leq d^3H^+(F)B^{d-1}.$$

Consider the Galois closure $\widehat{K}/\Q$ of $K/\Q$ and its Galois group $\Gamma$. For a conjugacy class $C$ of $\Gamma$ and $x\geq 1$, denote by $\pi_C(x)$ the number of prime numbers $p\leq x$, which do not ramify in $\widehat{K}/\Q$ and such that ${\rm Frob}_{p}(\widehat{K}/\Q) \in C$. For $C=\{1\}$, $\pi_{\{1\}}(x)$ is the number of primes $p\leq x$, totally split in $O_{\widehat{K}}$.
\medskip

Fix an integer $P>0$ and let $x>0$ such that 
$$(*) \hskip 10pt \pi_{\{1\}}(x)\geq h(B)+1+\pi_{\{1\}}(P).$$ 

\noindent More precisely, we take $x$ as follows : $x=2a\log a$ with $a=6 \hskip 2pt|\Gamma| \hskip 2pt h(B) \pi_{\{1\}}(P)$. For $P$ suitably large, depending on $K$, $B$, it is easily checked that $\frac{x}{\log x} \geq a$ and so $$ \frac{x}{2|\Gamma|\log x} \geq 3 \hskip 2pt h(B) \hskip 2pt \pi_{\{1\}}(P) \geq h(B) +1+ \pi_{\{1\}}(P),$$
which implies that $\pi_{\{1\}}(x) \geq h(B)+1+\pi_{\{1\}}(P)$ as from classical results on the Chebotarev theorem, we have $\pi_{\{1\}}(x) \geq \frac{x}{2|\Gamma|\log x},$ for $x$ suitably large. \medskip

From $(*)$, there exist at least $[h(B)]+1$ prime numbers $p\in]P,x]$ that are totally split in $K/\Q$. Every such prime number $p$ provides $\dd$ primes of $K$ of norm equal to $p$. Hence we have $\dd[h(B)]+\dd$ distinct prime ideals totally split in $K$ and of norm $\leq x$. Furthermore, this number $\dd[h(B)]+\dd$ is $\geq r$ as $r=[\dd h(B)]+1$.

\indent We choose $r$ of these ideals which we denote by $\ip_1,...,\ip_r$. By lemma \ref{normid}, there exists an ideal, say $\ip_i$ ($i\in \{1,...,r\}$), such that $\displaystyle \frac{\partial F}{\partial X_1}(x_1,x_2) \notin \ip_i$, which means that $(x_1,x_2)\in S(F,B,\ip_i)$. Thus we obtain $\displaystyle S(F,B)= \bigcup_{i=1}^{r} S(F,B,\ip_i)$. \smallskip

Now $\pi_{\{1\}}(P) \leq \displaystyle \frac{2P}{|\Gamma| \log P}$ for $P$ suitably large depending on $K$, $B$. So for $i = 1,\cdots,r$, 
$$N_{K/\Q}(\ip_i)\leq x=2a\log a \leq 12|\Gamma| \hskip 2pt h(B) \hskip 2pt \frac{2 P}{|\Gamma|\log P} \hskip 2pt \log(6 |\Gamma| h(B) \frac{2 P}{|\Gamma| \log P}).$$

\noindent We conclude that for some constant $C_1$ depending on $K$ we have 
$$N_{K/\Q}(\ip_i)\leq C_1 h(B)^2 \frac{P}{\log P} \ln (\frac{P}{\log P}).$$
\end{proof}

\subsubsection{Working on $S(F,B,\ip)$ for a fixed $\ip \in \{\ip_1,\cdots,\ip_r\}$.}

For the next steps, we choose a monomial $X_1^{m_1} X_2^{m_2}$ such that the corresponding coefficient in $F$ is non-zero and such that $m_1+m_2=d$. We let then $\mathcal{E}$ be the following set of monomials 
$$\mathcal{E}= \{X_1^{e_1} X_2^{e_2} \mid 0\leq e_i\leq m_i,~ i=1,2, ~ e_1+e_2\leq D\}.$$

Fix $\ip \in \{\ip_1,\cdots\ip_r\}$. Recall from lemma \ref{lemmeSFBP} that $N_{K/\Q}(\ip)=p \geq P$ and $D \geq d$. Let $\F_p=O_K/\ip$ be the residue field of $\ip$. For every point $\underline{t}=(t_1,t_2)\in\F_p^2$ of $F$ modulo $\ip$ such that $\displaystyle \frac{\partial F}{\partial X_1}(\underline{t}) \not\equiv 0 \mod \ip$, consider then the following subset of $S(F,B,\ip)$ : 
$$S(\underline{t})=\{(x_1,x_2) \in S(F,B,\ip) : x_i  = t_i \mod \ip,~i=1,2\}.$$

\noindent We have $S(F,B,\ip)=\displaystyle \bigcup_{\underline{t}}S(\underline{t})$ where $\underline{t}$ ranges over the all non-singular points of $F$ modulo $p$. Fix such a $\underline{t}$. We have $\frac{\partial F}{\partial X_1}(\underline{t})\neq 0  \mod \ip$. The goal now is to construct one polynomial (one of those in theorem \ref{HBE}) that vanishes at all points of $S(\underline{t})$. \smallskip

 Denote by $\underline{x_{i}}=(x_{i1},x_{i2})$, $i=1,\cdots ,L$, the elements of $S(\underline{t})$ (with $L={\rm card}(S(\underline{t})$). 
\smallskip

 Set $E=\#\mathcal{E}$ and let $M$ be the $L\times E$ matrix
$$M=(\underline{x_i}^{\underline{e}})_{1\leq i \leq L,~ \underline{e}\in \mathcal{E}} .$$
More specifically, if $\mathcal{E}=\{\underline{X}^{\underline{e_1}},...,\underline{X}^{\underline{e_E}}\}$,
$$ M= \begin{pmatrix}
   \underline{x_1}^{\underline{e_1}} &  \cdots & \underline{x_1}^{\underline{e_E}}\\
   \underline{x_2}^{\underline{e_1}} &  \cdots & \underline{x_2}^{\underline{e_E}}\\
   \vdots & \vdots & \\
   \underline{x_L}^{\underline{e_1}}  & \cdots & \underline{x_L}^{\underline{e_E}} 
\end{pmatrix} 
= \begin{pmatrix}
   x_{11}^{e_{11}}x_{12}^{e_{12}} &  \cdots & x_{11}^{e_{E1}}x_{12}^{e_{E2}}\\
   x_{21}^{e_{11}}x_{22}^{e_{12}} &  \cdots & x_{21}^{e_{E1}}x_{22}^{e_{E2}}\\
   \vdots & \vdots & \\
   x_{L1}^{e_{11}}x_{L2}^{e_{12}}  & \cdots & x_{L1}^{e_{E1}}x_{L2}^{e_{E2}} 
\end{pmatrix}$$

\noindent Finally set $E'= \displaystyle \sum_{i=1}^E e_{i1}+e_{i2}$.
\begin{prop} \label{p1}
Assume that $P^{E(E-1)/2}\geq (E^E B^{E'})^{\dd}$. Then we have the following.
\begin{enumerate}
\item The rank of $M$ is $\leq E-1$,
\item There exists a non-zero polynomial $F_{\underline{t}}$ of degree $\leq D$ such that
\begin{itemize}
\item $F_{\underline{t}}(\underline{x})=0$ for all $\underline{x}\in S(\underline{t})$,
\item $F_{\underline{t}}$ and $F$ are relatively prime.
\end{itemize}
\end{enumerate}
\end{prop}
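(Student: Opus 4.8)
\emph{Part (1).} The plan is to prove the rank bound by the Heath--Brown--Walkowiak determinant method, adapted to the number field $K$, arguing by contradiction. If $\operatorname{rank}(M)=E$, then $L\geq E$ and one may choose $E$ rows of $M$, indexed by distinct points $\underline{y_1},\dots,\underline{y_E}\in S(\underline t)$, so that the $E\times E$ matrix $M'$ they form (columns still indexed by all of $\mathcal E$) is invertible; set $\Delta=\det M'\in O_K\setminus\{0\}$. For the upper bound I would expand $\Delta$ by the Leibniz formula: using $H(y_{a1}),H(y_{a2})\leq B$ together with Proposition~\ref{hauteval}, each of the $E!$ terms has height at most $B^{E'}$, since $E'=\sum_{\underline e\in\mathcal E}(e_1+e_2)$ is exactly the exponent of $B$ produced by one diagonal product. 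Hence $H(\Delta)\leq E!\,B^{E'}<E^E B^{E'}$ (using $E\geq 2$), and by the norm--height inequality $|N_{K/\Q}(\Delta)|\leq H(\Delta)^{\dd}<(E^E B^{E'})^{\dd}$.

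The core of the proof is the matching lower bound $\ip^{E(E-1)/2}\mid\Delta O_K$. Here the defining feature of $S(F,B,\ip)$ enters: since $\frac{\partial F}{\partial X_1}(\underline x)\notin\ip$ for $\underline x\in S(\underline t)$, the reduction $\underline t$ is a smooth point of $\{F=0\}$ modulo $\ip$ at which $X_2$ is a local coordinate. Passing to the completion $K_\ip$, with valuation ring $O_{K_\ip}$, and applying Hensel's lemma (the formal implicit function theorem), one gets a power series $\psi\in O_{K_\ip}[[Y-\tilde t_2]]$, where $\tilde t_2\in O_K$ lifts $t_2$, with $F(\psi(Y),Y)=0$ and such that every $\underline y=(y_1,y_2)\in S(\underline t)$ satisfies $y_1=\psi(y_2)$ and $y_2\equiv\tilde t_2\pmod{\ip}$. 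Thus the $(a,j)$-entry of $M'$ is $\Phi_j(y_{a2})$ with $\Phi_j(Y)=\psi(Y)^{e^{(j)}_1}Y^{e^{(j)}_2}=\sum_{k\geq0}a_{jk}(Y-\tilde t_2)^k$, $a_{jk}\in O_{K_\ip}$. Re-expressing the rows of $M'$ over the system $\{(Y-\tilde t_2)^k\}_{k\geq0}$ (truncated modulo a high power of $\ip$) factors $M'$ as a product $V\cdot A$ with $V=\big((y_{a2}-\tilde t_2)^k\big)_{a,k}$ and $A=(a_{jk})_{k,j}$; Cauchy--Binet then writes $\Delta$ as an $O_{K_\ip}$-combination of generalized Vandermonde determinants in the $z_a:=y_{a2}-\tilde t_2\in\ip$, each of which equals a Schur polynomial in the $z_a$ times $\prod_{a<b}(z_a-z_b)$, hence is divisible by $\ip^{E(E-1)/2}$. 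So $\ip^{E(E-1)/2}\mid\Delta O_K$, whence $|N_{K/\Q}(\Delta)|\geq N_{K/\Q}(\ip)^{E(E-1)/2}=p^{E(E-1)/2}\geq P^{E(E-1)/2}$. Combined with the standing hypothesis $P^{E(E-1)/2}\geq(E^E B^{E'})^{\dd}$ and the strict upper bound above, this is a contradiction, so $\operatorname{rank}(M)\leq E-1$.

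\emph{Part (2).} By (1) the right kernel of $M$ is nonzero; after clearing denominators I would pick a nonzero $(c_{\underline e})_{\underline e\in\mathcal E}\in O_K^E$ in it and set $F_{\underline t}=\sum_{\underline e\in\mathcal E}c_{\underline e}\underline X^{\underline e}$. Then $F_{\underline t}\in O_K[X_1,X_2]$ is nonzero, vanishes at every point of $S(\underline t)$, and has degree at most $m_1+m_2=d\leq D$; it is thus a candidate for one of the polynomials $F_i$ of Theorem~\ref{HBE}. It remains to check $\gcd(F_{\underline t},F)=1$ in $\overline K[X_1,X_2]$. As $F$ is irreducible there, a nontrivial common factor forces $F\mid F_{\underline t}$, hence (equal degrees) $F_{\underline t}=cF$ for a constant $c$. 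But $F_{\underline t}$ is supported on the rectangle $\{X_1^{e_1}X_2^{e_2}:0\leq e_i\leq m_i\}$ whereas $F$ is not: were every monomial of $F$ in that rectangle, the degree-$d$ part of $F$ would be $cX_1^{m_1}X_2^{m_2}$, and combining this with $F$ monic in $X_2$ forces $m_1=0$, $m_2=d$, $F\in\overline K[X_2]$, contradicting irreducibility since $d\geq 2$. Hence $F_{\underline t}\neq cF$, a contradiction, so $F_{\underline t}$ and $F$ are coprime.

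The step I expect to be the main obstacle is the $\ip$-adic divisibility in Part~(1): one must set up the integral local parametrization $x_1=\psi(x_2)$ of the curve at the smooth point $\underline t$ over $K_\ip$ and then extract the exact exponent $E(E-1)/2$ through Cauchy--Binet and the Schur-polynomial factorization of generalized Vandermonde determinants. Everything else — the height and norm estimates (which use only Proposition~\ref{hauteval} and the inequality $|N_{K/\Q}(a)|\leq H(a)^{\dd}$) and the combinatorial support argument behind coprimality — is routine bookkeeping.
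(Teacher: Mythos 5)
Your Part (1) is essentially the paper's argument in a different wrapper. Both proofs reduce to a one‑variable $\ip$‑adic parametrization of the curve near the non‑singular reduction $\underline t$ (the paper uses its Lemma~\ref{LFI}, a polynomial truncation of Hensel; you invoke the formal implicit function theorem over $O_{K_\ip}$), and both extract the divisibility $\ip^{E(E-1)/2}\mid \Delta$ from the resulting staircase structure in the columns, before comparing with the crude height bound $H(\Delta)\leq E!\,B^{E'}$. Where the paper carries out explicit column operations to produce a matrix whose $l$-th column lies in $\ip^{l-1}$, you factor $M'=VA$ over the power basis in $Y-\tilde t_2$ and apply Cauchy--Binet plus the Schur-polynomial factorization of generalized Vandermonde determinants. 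These are two presentations of the same computation, and your version is correct (the truncation point you flag is handled exactly as you say: truncating $\psi$ to order $\geq E(E-1)/2$ suffices, since the $z_a$ lie in $\ip$). In fact your treatment is slightly cleaner on one small point: the paper's hypothesis $P^{E(E-1)/2}\geq (E^E B^{E'})^{\dd}$ is non-strict, yet the paper's wrap-up invokes a strict inequality; you supply the missing strictness from $E!<E^E$.

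The genuine divergence is in the coprimality claim of Part (2). The paper cites a Newton-polygon argument from Walkowiak, which is valid without any monicity hypothesis on $F$. You instead give a self-contained support argument: if $F\mid F_{\underline t}$ then by degree comparison $F_{\underline t}=cF$, forcing $F$ to be supported in the box $\{e_1\leq m_1,\ e_2\leq m_2\}$, and then monicity of $F$ in $X_2$ pins $m_1=0$, contradicting absolute irreducibility for $d\geq 2$. This reasoning is correct as written, but note that monicity of $F$ in $X_2$ is \emph{not} among the hypotheses of Theorem~\ref{HBE} (it is assumed only later, in the proof of Theorem~C). So your coprimality step proves a slightly weaker version of the proposition than the one stated; it covers the paper's actual application but not the general case of Theorem~\ref{HBE}. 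To match the stated generality you would need the Newton-polygon route, or some other way to rule out $F_{\underline t}=cF$ when $F$ is not monic in $X_2$ yet is supported in the box (e.g.\ $F=X_1X_2-1$). This is a minor but real gap relative to the statement as written.
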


\indent The proof uses the following lemma which is some version of Hensel's lemma and reduces the problem from two to one variable. We refer to \cite{walarticle} lemma 1.2 for a proof (mod $p^m$ there just has to be replaced by mod $\ip^m$).

\begin{lemme}\label{LFI}
Let $F(X_1,X_2) \in \widetilde{O_K}[X_1,X_2]$ be a polynomial in two variables with coefficients in the completion $\widetilde{O_K}$ of $O_K$ for the prime ideal $\ip$. Let $\underline{u}=(u_1,u_2) \in \widetilde{O_K}$ such that $F(\underline{u})=0$ and $\frac{\partial F}{\partial X_1}(\underline{u})\notin \ip$. For every integer $m\geq 1$, there exists $f_m(Y)\in \widetilde{O_K}[Y]$ such that if $F(\underline{x})=0$ for a certain $\underline{x}=(x_1,x_2)\in \widetilde{O_K}^2$ with $\underline{x}=\underline{u} \mod \ip$, then $x_1=f_m(x_2) \mod \ip^m$ (for every $m\geq 1$).
\end{lemme}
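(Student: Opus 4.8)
The plan is to freeze the second coordinate, invoke the one–variable Hensel's lemma to see that $x_1$ is determined by $x_2$, and then realise the associated Newton iteration by actual polynomials in $Y$.

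First I would make the reduction precise. Let $\underline{x}=(x_1,x_2)\in\widetilde{O_K}^2$ be any point with $\underline x\equiv\underline u\bmod\ip$ and $F(\underline x)=0$. The one–variable polynomial $G(X_1):=F(X_1,x_2)\in\widetilde{O_K}[X_1]$ satisfies $G(u_1)\equiv F(u_1,u_2)=0\bmod\ip$ (since $x_2\equiv u_2$) and $G'(u_1)=\tfrac{\partial F}{\partial X_1}(u_1,x_2)\equiv\tfrac{\partial F}{\partial X_1}(u_1,u_2)\not\equiv0\bmod\ip$. Hence $\tfrac{\partial F}{\partial X_1}(x_1,x_2)$ is a unit of the complete local ring $\widetilde{O_K}$, and $x_1$ is the Hensel root of $G$ congruent to $u_1\bmod\ip$; in particular $x_1$ depends only on $x_2$. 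What remains is to produce, for each $m$, a single polynomial $f_m\in\widetilde{O_K}[Y]$ with $f_m(x_2)\equiv x_1\bmod\ip^m$, uniformly in all admissible $x_2$.

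Next I would run a doubling Newton iteration, carrying along a polynomial approximation of the inverse of the derivative so that no genuine division is ever performed inside $\widetilde{O_K}[Y]$. Fix $\mu\in\widetilde{O_K}$ with $\mu\equiv\tfrac{\partial F}{\partial X_1}(u_1,u_2)^{-1}\bmod\ip$ and put $f^{(0)}(Y)=u_1$, $g^{(0)}(Y)=\mu$ (constants). Inductively define
\[
f^{(k+1)}(Y)=f^{(k)}(Y)-F\!\left(f^{(k)}(Y),Y\right)g^{(k)}(Y),\qquad
g^{(k+1)}(Y)=g^{(k)}(Y)\Big(2-\tfrac{\partial F}{\partial X_1}\!\left(f^{(k+1)}(Y),Y\right)g^{(k)}(Y)\Big).
\]
One checks by induction on $k$ that for every admissible $x_2$ one has $f^{(k)}(x_2)\equiv x_1\bmod\ip^{2^k}$ and $g^{(k)}(x_2)\equiv\tfrac{\partial F}{\partial X_1}(x_1,x_2)^{-1}\bmod\ip^{2^k}$: the base case uses $x_1\equiv u_1$ and $x_2\equiv u_2\bmod\ip$, and the induction step follows by Taylor–expanding $F$ and $\tfrac{\partial F}{\partial X_1}$ about $X_1=x_1$, using $F(x_1,x_2)=0$, and combining the usual quadratic–convergence estimate for Newton's method for $f$ with the one for Newton's inversion scheme $g\mapsto g(2-ag)$ for $g$. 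Given $m$, choose $k$ with $2^k\geq m$ and set $f_m:=f^{(k)}$.

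Finally there is the routine bookkeeping: all $f^{(k)},g^{(k)}$ manifestly lie in $\widetilde{O_K}[Y]$, since $F$, $\tfrac{\partial F}{\partial X_1}$, $u_1$, $\mu$ all have coefficients in $\widetilde{O_K}$ and only polynomial operations are used; and one must make the Taylor–expansion congruences explicit, which is exactly Walkowiak's Lemma~1.2 with $p^m$ replaced throughout by $\ip^m$, the only structural input being that $\widetilde{O_K}$ is a complete local ring with maximal ideal $\ip\widetilde{O_K}$ and residue field $O_K/\ip$. I expect the one genuinely delicate point to be the coupling of the two iterations: because one cannot invert $\tfrac{\partial F}{\partial X_1}$ inside $\widetilde{O_K}[Y]$, the Newton step for $f^{(k)}$ must be run in lockstep with the auxiliary iteration keeping $g^{(k)}$ a polynomial inverse of the derivative to matching $\ip$-adic precision, and verifying that both stay at precision $2^k$ at stage $k$ (in particular that the perturbation of $a=\tfrac{\partial F}{\partial X_1}(x_1,x_2)$ by $\tfrac{\partial F}{\partial X_1}(f^{(k+1)}(x_2),x_2)$ is harmless) is where the care is needed. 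Everything else is a direct transcription of the classical two–variable Hensel argument.
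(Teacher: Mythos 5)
Your proof is correct, and the idea is the same Hensel-type argument the paper has in mind: the paper gives no proof of its own, merely citing Walkowiak's Lemma~1.2 and remarking that $p^m$ is to be replaced by $\ip^m$, whereas you reconstruct the argument in full. The induction you run does close: with $\delta_k=f^{(k)}(x_2)-x_1\in\ip^{2^k}$ and $g^{(k)}(x_2)\equiv a^{-1}\bmod\ip^{2^k}$ (where $a=\tfrac{\partial F}{\partial X_1}(x_1,x_2)$, a unit), the Hasse–Taylor expansion of $F$ about $X_1=x_1$ gives $F(f^{(k)}(x_2),x_2)\equiv a\delta_k\bmod\ip^{2^{k+1}}$, whence $f^{(k+1)}(x_2)-x_1\in\ip^{2^{k+1}}$, and the inversion step $g\mapsto g(2-a'g)$ doubles the precision on the inverse as well, since $a'\equiv a\bmod\ip^{2^{k+1}}$.

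One remark: the auxiliary doubling iteration for $g^{(k)}$ is more machinery than is strictly needed. Since $\tfrac{\partial F}{\partial X_1}(\underline u)$ is a genuine unit of the complete local ring $\widetilde{O_K}$, you may take $\mu=\tfrac{\partial F}{\partial X_1}(\underline u)^{-1}\in\widetilde{O_K}$ exactly (a constant, not merely a $\bmod\,\ip$ approximation) and run the linearly convergent iteration
\[
f^{(k+1)}(Y)=f^{(k)}(Y)-\mu\,F\bigl(f^{(k)}(Y),Y\bigr).
\]
Then $f^{(k+1)}(x_2)-x_1\equiv(1-\mu a)\,\delta_k\bmod\ip^{2k}$, and $1-\mu a\in\ip$ because $(x_1,x_2)\equiv(u_1,u_2)\bmod\ip$, so one gains a full $\ip$-adic digit per step with no need to track a polynomial inverse. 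Both versions are valid; the linear one is just closer to the minimal transcription of the classical two-variable Hensel lemma that the paper appeals to.
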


\begin{proof}[Proof of proposition \ref{p1}]

First, note that $2.$ easily follows from $1.$ As the rank of $M$ is $\leq E-1$, there exists a non-zero matrix $C=(c_{\underline{e}}) \in O_K^E$, such that $M C=0$. We use this matrix to construct our polynomial $F_{\underline{t}}$ :
$$F_{\underline{t}}(X_1,X_2)=\sum_{\underline{e}\in\mathcal{E}}c_{\underline{e}}X_1^{e_1}X_2^{e_2}.$$
This non-zero polynomial is of degree $\leq D$ and $F_{\underline{t}}(x)=0$ for every $x\in S(\underline{t})$. \smallskip

Furthermore, an argument using Newton polygons shows that for our choice of the set $\mathcal{E}$ of monomials, the polynomials $F$ and $F_{\underline{t}}$ are relatively prime. We refer to \cite[\S1.3.4]{walarticle} where this argument is detailed. \medskip

Proof of $1.$ If $L< E$  the result is clear.
Suppose that $L \geq E$ and consider a minor, say $\Delta$, of order $E$. Up to permuting the lines and columns, one may assume that $\Delta =\det[(\underline{x_i}^{\underline{e}})_{1\leq i \leq E,~ \underline{e}\in \mathcal{E}}]$, or more specifically 
$$ \Delta =\begin{vmatrix}
   \underline{x_1}^{\underline{e_1}} &  \cdots & \underline{x_1}^{\underline{e_E}}\\
   \underline{x_2}^{\underline{e_1}} &  \cdots & \underline{x_2}^{\underline{e_E}}\\
   \vdots & \vdots & \\
   \underline{x_E}^{\underline{e_1}}  & \cdots & \underline{x_E}^{\underline{e_E}}
\end{vmatrix} $$
We will show that $\Delta =0$. To do this, we will show that the norm of $\Delta$ is divisible by a big power $p^\nu$ of $p$ and the height of $\Delta$ is bounded by a number $A$ such that $A^{\dd}<p^\nu$ and use the inequality $N(a) \leq H(a)^{\dd}$ for every $a\in O_K$.
\medskip 

For each $i=1, \cdots,E$, the pair $\underline{x_i}=(x_{i1},x_{i2})$ is in $S(\underline{t})$, in particular $\underline{x_i} \equiv \underline{t} \mod \ip$. Furthermore, we have assumed that $\displaystyle \frac{\partial F}{\partial X_1}(\underline{t})\notin \ip$. So we have 
$$ \frac{\partial F}{\partial X_1}(\underline{x_i}) \notin \ip \text{ and } F(\underline{x_i})=0 ~~ (i=1,\cdots, E).$$

We apply Lemma \ref{LFI} with $F$, $\underline{u} \in \widetilde{O_K}^2$ taken to be a lift of $\underline{t}$, and with $\underline{x}$ taken to be $\underline{x_i}$ ($i=1,\cdots,E$). Conclude that with $f_m(Y)$ the polynomials from lemma \ref{LFI}, we have $x_{i1}=f_m(x_{i2}) \mod \ip^m$ (for every $m\geq 1$ and $i=1,\cdots,E$). \smallskip

Set $\underline{w_i}=(w_{i1},w_{i2})=(f_m(x_{i2}),x_{i2})$, consider the matrix $M_0=(\underline{w_i}^{\underline{e}})_{1\leq i \leq E,~ \underline{e}\in \mathcal{E}}$ and set $\Delta_0=\det(M_0)$. For every $m\geq 1$, we have
$$\Delta \equiv \Delta_0 \mod {\ip^m}.$$

Because of the definition of $S(\underline{t})$, $x_{i2} \equiv t_2 \mod \ip$. Thus $x_{i2}$ can be written as $x_{i2}=t_2+y_{i2}$ where $t_2$ is independent of $i$ and $y_{i2} \in \ip$ for all $i=1,\cdots,E$.

\indent For $\underline{e}\in \mathcal{E}$, we then have $$\underline{w_i}^{\underline{e}}=f_m(t_2+y_{i2})^{e_1}(t_2+y_{i2})^{e_2}=g_{\underline{e}}(y_{i2})$$
for some polynomial $g_{\underline{e}}(Y) \in \widetilde{O_K}[Y]$.
\medskip

Next, we study the divisibility by $p$ of the norm of $\Delta_0$. Every column of $M_0$ corresponds to a polynomial $g_{\underline{e}}(Y)$. We claim that we can make some linear operations on the columns, without changing the determinant of $M_0$, in such a way to organize the columns by strictly growing degree. If $a$ is the smallest degree of some monomial, in first column, the degree $a$ monomial can be removed in every other column; iterating this process proves the claim. 
\medskip

\indent In the end, column $l$ has only elements in $\ip^{l-1}$ because it consists of polynomials in $y_{i2}$ where the first term is of degree $\geq l-1$ and $y_{i2} \in \ip$. Thus, the norm $N_{K/\Q}(\Delta_0)$ is divisible by $p^{ E(E-1)/2}$.  By choosing $m \geq E(E-1)/2$, we obtain that $N_{K/\Q}(\Delta)$ is divisible by $p^{E(E-1)/2}$.
\medskip

\indent Next, we estimate the height $H(\Delta)$. We have $H(x_{ij})\leq B$, $i=1,\cdots,E$, $j=1,2$. Denote by $S_E$ the permutation group of $E$ elements and for $\sigma \in S_E$, $\varepsilon(\sigma)$ the signature of $\sigma$. We have 
$$\Delta = \displaystyle \sum_{\sigma \in S_E} \varepsilon(\sigma) \prod_{i=1}^E\underline{x_{\sigma_i}}^{\underline{e_i}}.$$

For $v$ an archimedean place, 
\begin{align*}
|\Delta|_v & \leq E! \max_{1\leq j \leq E}(|x_{j1}|_v^{e_{11}} |x_{j2}|_v^{e_{12}}) \times \cdots \times \max_{1\leq j \leq E} (|x_{j1}|_v^{e_{E1}} |x_{j2}|_v^{e_{E2}}) \\
 & \leq E! \hskip 2pt B^{e_{11}+e_{12}} \times \cdots \times B^{e_{E1}+e_{E2}}
\end{align*}

We obtain :
\begin{align*}
H(\Delta) & = \max_{v/\infty} |\Delta|_v \\
& \leq E! \hskip 2pt B^{e_{11}+e_{12}} \times \cdots \times B^{e_{E1}+e_{E2}}
\end{align*}

To summarize, $N_{K/\Q}(\Delta)$ is divisible by  $p^{E(E-1)/2}$ but $|N_{K/\Q}(\Delta)| \leq  H(\Delta)^{\dd} \leq (E^EB^{E'})^{\dd}$.
We have then proved that under the condition $p^{E(E-1)/2}>(E^EB^{E'})^{\dd}$, we have $\Delta =0$. As $p\geq P$ and $\Delta$ is an arbitrary minor of $M$, we conclude that, under the assumption of the statement, the rank of $M$ is $\leq E-1$.
\end{proof}

\subsubsection{Technical conclusion of theorem \ref{HBE}.}

In order to apply proposition \ref{p1}, $P$ should satisfy the condition $$P>(E^{M_1}B^{M_2})^{\dd}.$$

\noindent where $M_1:=\displaystyle \frac{2}{(E-1)}$ and $M_2:=\displaystyle \frac{2E'}{E(E-1)}$. We refer to \cite{walarticle} for the following estimates of $M_1$ and $M_2$ :

 $$M_1 \leq \frac{2}{dD}+\frac{2}{D^2} ~~\text{and} ~~ M_2\leq \frac{1}{d}+\frac{6}{D}.$$

\noindent This leads to this condition $$P>(E^{2(dD)^{-1}+2D^{-2}}B^{d^{-1}+6D^{-1}})^{\dd}.$$
Note that $E\leq 2dD$ and, by an elementary study of function, we have $(2dD)^{2(dD)^{-1}+2D^{-2}} \leq e^8$. \\
We can choose
$$P=1+[(e^8B^{d^{-1}+6D^{-1}})^{\dd}] \leq (2^{13}B^{d^{-1}+6D^{-1}})^{\dd},$$
and take $B$ large enough so that $P$ satisfies the required condition of proposition \ref{p1}.

%%%%%%%%%%%%%%%%%%%%%%%%%%%%%%%%%%%%%%%%%%%%HBE%%%%%%%%%%%%%%%%%%%%%%%%%%%%%%%%%%%%%%%%%%%%%ù
%%%%%%%%%%%%%%%%%%%%%%%%%%%%%%%%%%%%%%%%%%%%%%%%%%%%%%%%%%%%%%%%%%%%%%%%%%%%%%%%%%%%%%%%%%%%%
To finish the proof of theorem \ref{HBE}, it remains to estimate the number $k$.
\smallskip

From lemma \ref{lemmeSFBP} and proposition \ref{p1},  $k=1+k'=1+\displaystyle \sum_{i=1}^r k''_{\ip_i}$ where $k''_{\ip}$ is the number of sets of type $S(\underline{t})$ in $S(F,B,\ip)$. Using the Lang-Weil bound \cite{frja}, we obtain $$k''_{\ip} \leq d(p+1+(d-1)(d-2)\sqrt{p})\leq 2d^3p.$$

Conjoining this with the upper bound for $p=N_{K/\Q}(\ip)$ from lemma \ref{lemmeSFBP}, we obtain :
 $$k\leq k_1 d^3 h(B)^2\frac{P}{\ln P} \ln (\frac{P}{\ln P}).$$ where $k_1$ is a constant depending on $K$.
 \medskip

As $\log P \geq \log(P/\log P)$, we have,
\begin{align*}
k & \leq k_1 d^3 h(B)^2 P\\
 & \leq c_2 d^3 \log^2(d^3H^+(F)B^{d-1}) (B^{d^{-1}+6D^{-1}})^\dd
\end{align*}
 where $c_2$ is a constant depending on $K$.

%%%%%%%%%%%%%%%%%%%%%%%%%%%%%%%%%%%%%%%%%%%%%%%%%%%%%%%%%%%%%%%%%%%%%%%%%%%%%%%
%%%%%%%%%%%%%%%%%%%%%%%%%%%%%%%%%%%%%%%%%%%%%%%%%%%%%%%%%%%%%%%%%%%%%%%%%%%%%%
%%%%%%%%%%%%%%%%%%%%%%%%%%%%%%%%%%%%%%%%%%%%%%%%%%%%%%%%%%%%%%%%%%%%%%%%%%%%%%%%
\subsection{Proof of theorem C.}

Let $F(X_1,X_2)$ monic in $x_2$ and $B$ as in theorem C. We keep the notation of \S 3.1.1.

\subsubsection{Non absolutely irreducible case.}
If $F$ is not absolutely irreducible, the following statement directly provides a bound for $N(F,B)$.
\begin{prop}
Let $F(X_1,X_2) \in O_K[X_1,X_2]$ of degree $d$, irreducible in $K[X_1,X_2]$ and not absolutely irreducible. Then $N(F,B)\leq 4d^4.$
\end {prop}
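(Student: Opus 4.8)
The plan is to exploit the fact that an irreducible but not absolutely irreducible polynomial $F$ factors over a proper extension of $K$, and that a common zero $(x_1,x_2)\in O_K^2$ of $F$ must be a zero of \emph{each} absolutely irreducible factor, hence of the resultant of two Galois-conjugate factors; this resultant is a nonzero polynomial in one variable over a number field, so it has few roots.

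\textbf{Step 1: factor $F$ over $\overline K$ and pass to a Galois conjugate.} Write $F = F_1 \cdots F_s$ with $F_i \in \overline K[X_1,X_2]$ absolutely irreducible, $s\geq 2$ (after possibly multiplying by a constant so the $F_i$ are taken with suitable leading coefficients; since $F$ is monic in $X_2$ one can keep the $F_i$ integral over $O_K$ in $X_2$). Let $L$ be a finite Galois extension of $K$ over which all the $F_i$ are defined. Since $F$ is irreducible over $K$, the group $\mathrm{Gal}(L/K)$ permutes the $F_i$ transitively; in particular there is $\tau \in \mathrm{Gal}(L/K)$ with $\tau(F_1) = F_j$ for some $j\neq 1$, so $F_1$ and $\tau(F_1)$ are two distinct absolutely irreducible factors of $F$ (note $F_1 \nmid \tau(F_1)$ as both are irreducible of the same degree and are not associates).

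\textbf{Step 2: every integral common zero lies on the resultant.} If $(x_1,x_2)\in O_K^2$ satisfies $F(x_1,x_2)=0$, then since $F$ is monic in $X_2$ one of the $F_i(x_1,x_2)$ vanishes, and applying a suitable Galois automorphism (fixing $(x_1,x_2)\in K^2$) we get $F_1(x_1,x_2)=0$; applying $\tau$ gives also $\tau(F_1)(x_1,x_2)=0$. Hence $x_1$ is a root of the resultant $R(X_1) = \mathrm{Res}_{X_2}(F_1, \tau(F_1)) \in L[X_1]$, which is nonzero because $F_1$ and $\tau(F_1)$ are coprime in $\overline K[X_1,X_2]$ (here monicity in $X_2$ guarantees the resultant is not identically zero). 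The degree of $R$ in $X_1$ is at most $\deg_{X_1}(F_1)\deg_{X_2}(\tau(F_1)) + \deg_{X_2}(F_1)\deg_{X_1}(\tau(F_1)) \leq 2(d/2)^2 \leq d^2/2$ by the standard resultant degree bound, using $\deg F_i \leq d$ and $s\geq 2$ so $\deg F_i \leq d-1$; a cruder bound $\deg_{X_1} R \leq d^2$ suffices. So $x_1$ takes at most $d^2$ values; for each such $x_1$, the polynomial $F(x_1,X_2)$ is nonzero (monic in $X_2$) of degree $\leq d$, giving at most $d$ values of $x_2$. Multiplying, $N(F,B) \leq d^2 \cdot d \cdot(\text{small constant})$; keeping careful track of the factor $2$ from $s\geq 2$ (so $\deg_{X_2}F_i \le d/2$ when convenient) and the two terms in the resultant degree gives $N(F,B) \leq 4d^4$.

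\textbf{Main obstacle.} The only delicate point is arranging the factorization so that the $F_i$ are \emph{integral} over $O_K$ in the variable $X_2$ (so that monicity is preserved and resultants behave well) and that $F_1$, $\tau(F_1)$ are genuinely coprime in $\overline K[X_1,X_2]$ — this is where absolute irreducibility and $s\geq 2$ are used, and one must make sure the chosen conjugate $\tau(F_1)$ is not an associate of $F_1$. Everything else is a bookkeeping of degrees; no height estimates or largeness of $B$ are needed here, which is why the bound is uniform in $B$.
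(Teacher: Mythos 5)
Your proof is correct and uses the same underlying mechanism as the paper: an $O_K$-rational zero of $F$ is a common zero of two distinct Galois-conjugate, absolutely irreducible factors, which are non-associate because both are monic in $X_2$, and this forces $x_1$ to satisfy a nonzero one-variable polynomial of degree $O(d^2)$. The only (harmless) difference is the auxiliary polynomial: the paper takes $\mathrm{disc}_{X_2}(F)$ and bounds both coordinates separately to get $2d^2 \cdot 2d^2 = 4d^4$, while you take $\mathrm{Res}_{X_2}(F_1,\tau(F_1))$ and then bound $x_2$ fiberwise by the degree in $X_2$, which in fact yields the sharper $O(d^3)$.
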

\begin{proof}
We will count the number of $x_1 \in O_K$ such that there exists $x_2\in O_K$ with $F(x_1,x_2)=0$. The same argument for $x_2$ will allow us to conclude.
\smallskip

\indent Let $(x_1,x_2)$ be a zero of $F$. Consider the factorization of $F$ in $\overline{K}[X_1,X_2]$. If $F(x_1,x_2)=0$ for $(x_1,x_2)\in O_K^2$ then $\varphi(x_1,x_2)=0$ for some irreducible factor $\varphi$ in $\overline{K}[X_1,X_2]$, also monic in $X_2$ and of degree $<d$ (as $F$ is not irreducible in $\overline{K}[X_1,X_2]$). We deduce that $\psi (x_1,x_2)=0$ for a $K$-conjugate $\psi$ of $\varphi$ over $K$ distinct from $\varphi$ (or else $F(X_1,X_2)$ would not be irreducible in $K[X_1,X_2]$). Furthermore, $\varphi$ and $\psi$ are not associated : if they were, as they are monic in $X_2$, they would be equal.
\smallskip

\indent Conclude that the product $\varphi \psi$ divide $F$. Thus $x_1$ is a double root of the polynomial $F(X_1,x_2)$. The number of such $x_1$ is bounded by the number of roots of the polynomial ${\rm disc}_{x_2}(F)$ which is of degree $\leq (2d-1)d \leq 2d^2$.
\smallskip

\indent With the same argument for $x_2$, we can say that the total number of points $(x_1,x_2) \in O_K^2$ such that $F(x_1,x_2)=0$ is at most $2d^2.2d^2=4d^4$. Hence $N(F,B)\leq 4d^4$.
\end{proof}

\subsubsection{The absolutely irreducible case.}
We assume $F(X_1,X_2)$ is irreducible in $\overline{K}[X_1,X_2]$ and is monic in $X_2$. For our applications, we need a bound for $N(F,B)$ which does not depend on the height $H(F)$ of $F$. We will use the following Siegel lemma for which we refer to \cite{trannumbers}.

\begin{lemme}\label{siegel}[Siegel lemma]
Let $K$ be a number field and $N$, $M$ two integers such that $1\leq M<N$. Let $H_0$ be a positive number and $a_{ij}\in K$, $1\leq i \leq N$, $1\leq j \leq M$, some algebraic numbers, not all zero, with height at most $H_0$. Then there exists a vector $\underline{x}\in O_K^N\backslash \{0\}$ such that : 
$$\displaystyle \sum _{i=1}^Na_{ij}x_i=0 \text{ , } j=1,...,M$$ and with $\displaystyle \max_{1\leq i\leq N}H(x_i)\leq C (C N H_0)^{M/(N-M)}$, where $C$ is a constant depending only on $K$.
\end{lemme}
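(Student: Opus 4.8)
The plan is to deduce the statement over $K$ from the classical Siegel lemma over $\Z$ (the box principle), by choosing a $\Z$-basis of $O_K$ and rewriting the whole system in coordinates; the exponent comes out right because, with $d=[K:\Q]$, one gets $dM$ equations in $dN$ unknowns over $\Z$ and $dM/(dN-dM)=M/(N-M)$.

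First I would clear denominators: multiplying the $j$-th equation by a suitable nonzero rational integer (a common denominator of $a_{1j},\dots,a_{Nj}$), which does not change its solution set, reduces us to the case $a_{ij}\in O_K$ with $H(a_{ij})\le c_1(K)H_0$ — here I would invoke \cite{trannumbers} or a standard estimate for the fact that the relevant denominator is controlled in terms of $K$ and $H_0$ only; alternatively one may state and use the lemma only when $a_{ij}\in O_K$, which is all that is needed in Proposition~\ref{p1}. Then fix once and for all a $\Z$-basis $\omega_1,\dots,\omega_d$ of $O_K$. Writing each unknown $x_i=\sum_{\ell=1}^d x_{i\ell}\omega_\ell$ with $x_{i\ell}\in\Z$, and expanding $a_{ij}\omega_\ell=\sum_{k=1}^d \lambda^{(j)}_{i\ell k}\omega_k$ with $\lambda^{(j)}_{i\ell k}\in\Z$, the equation $\sum_i a_{ij}x_i=0$ is equivalent, upon comparing $\omega_k$-coordinates, to the $d$ rational-integer equations $\sum_{i,\ell}\lambda^{(j)}_{i\ell k}x_{i\ell}=0$, $k=1,\dots,d$. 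Solving for the $\lambda$'s via the inverse of the matrix of archimedean conjugates $(\sigma_t(\omega_\ell))_{t,\ell}$, one bounds $|\lambda^{(j)}_{i\ell k}|\le c_2(K)\,H(a_{ij})\le c_3(K)H_0=:A$. Altogether we obtain a homogeneous linear system of $dM$ equations in the $dN$ integer unknowns $x_{i\ell}$, with $dN>dM$ and coefficient size $\le A$.

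Now apply the box principle over $\Z$: the cube of integer points $(x_{i\ell})\in[0,X]^{dN}$ has $(X+1)^{dN}$ elements, and each of the $dM$ linear forms sends it into a set of at most $2dNAX+1$ integer values, so the image has at most $(2dNAX+1)^{dM}$ elements; choosing $X$ of size comparable to $(dNA)^{dM/(dN-dM)}$ makes $(X+1)^{dN}$ exceed that, hence two distinct points of the cube have the same image and their difference is a vector $(x_{i\ell})\in\Z^{dN}\setminus\{0\}$ solving the system with $\max_{i,\ell}|x_{i\ell}|\le X$. Setting $x_i=\sum_\ell x_{i\ell}\omega_\ell$ gives a nonzero $\underline x\in O_K^N$ solving the original system, with $H(x_i)\le\bigl(\max_{\ell}\max_{t}|\sigma_t(\omega_\ell)|\bigr)\sum_\ell|x_{i\ell}|\le c_4(K)X$. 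Since $dM/(dN-dM)=M/(N-M)$, this is $\le c_4(K)\bigl(c_3(K)dN\,H_0\bigr)^{M/(N-M)}$, and absorbing $c_3,c_4,d$ into a single $C=C(K)$ — possible because the exponent $N/(N-M)=1+M/(N-M)$ on the "free" constant dominates the exponent $M/(N-M)$ on the factor $c_3 d$ — yields the claimed bound $C(CNH_0)^{M/(N-M)}$.

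The only genuine difficulty is bookkeeping: controlling the denominator in the first step, controlling the $\lambda^{(j)}_{i\ell k}$ under the change of basis, and checking that the final absorption keeps $C$ depending on $K$ alone (independent of $N,M,H_0$). A cleaner but less elementary alternative would be a direct geometry-of-numbers argument: the $K$-space of solutions has dimension $\ge N-M$, its intersection with $O_K^N$ embeds via the Minkowski map as a lattice of rank $\ge d(N-M)$ in $\R^{dN}$ whose covolume is bounded above by a Cauchy–Binet estimate on the maximal minors of the coefficient matrix, and Minkowski's convex body theorem then produces a short nonzero lattice vector with the same exponent $M/(N-M)$.
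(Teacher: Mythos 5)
The paper does not prove this lemma; it cites \cite{trannumbers} with no argument, so there is no in-text proof to compare yours against. Your route --- write everything in a fixed $\Z$-basis of $O_K$, obtain $[K:\Q]M$ homogeneous linear equations in $[K:\Q]N$ integer unknowns with coefficients bounded by $c(K)H_0$, apply the integer pigeonhole principle with $X$ of size $\bigl(c(K)NH_0\bigr)^{M/(N-M)}$, reassemble in $O_K^N$, and absorb constants --- is the standard derivation of Siegel's lemma over a number field and, for $a_{ij}\in O_K$, it is correct; the exponent bookkeeping $\bigl([K:\Q]M\bigr)/\bigl([K:\Q]N-[K:\Q]M\bigr)=M/(N-M)$ and the final constant-absorption both go through as you describe.

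The one step you downgrade to ``bookkeeping'' is in fact a genuine obstruction, and it exposes that the lemma as literally stated (for arbitrary $a_{ij}\in K$) is false with the height used in this paper. Take $K=\Q$, $N=2$, $M=1$, $a_{11}=1/(p_1\cdots p_k)$, $a_{21}=1$: since here $H(x)=\max_{v}\max(1,|x|_v)$, one has $H_0=p_k$, yet every nonzero $(x_1,x_2)\in\Z^2$ with $a_{11}x_1+a_{21}x_2=0$ satisfies $|x_1|\ge p_1\cdots p_k$, which is not $O(H_0)$ uniformly in $k$. So there is no ``standard estimate'' bounding the common denominator by $c_1(K)H_0$; standard references state Siegel's lemma either for algebraic-integer coefficients or with the multiplicative Weil height $\prod_v\max(1,|x|_v)^{[K_v:\Q_v]}$, for which $H(1/(p_1\cdots p_k))=p_1\cdots p_k$ and the counterexample disappears. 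Your second alternative --- state and prove the lemma only for $a_{ij}\in O_K$ --- is therefore not a shortcut but the necessary fix. It does suffice here: Siegel's lemma is invoked only in the proof of Proposition~\ref{indepH} (not Proposition~\ref{p1}, as you wrote), where the matrix entries are monomials evaluated at points of $O_K^2$ and hence lie in $O_K$.
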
 

The constant $C$ that appears below is the constant that appears in lemma \ref{siegel}. 

\begin{prop}\label{indepH}
Let $F(X_1,X_2) \in O_K[X_1,X_2]$ be an irreducible polynomial in $\overline{K}[X_1,X_2]$ of degree $d$ and monic in $X_2$. Then
$$ N(F,B)\leq d^2+3 \text{ or } H(F) \leq C^{5d^2} 2^{8 d^{2}}d^{8 d^2}B^{4 d^3}.$$
\end{prop}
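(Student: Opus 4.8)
The statement is a dichotomy: either the number of integral points is small ($\le d^2+3$) or the height of $F$ is bounded in terms of $B$ and $d$ alone. The strategy is to suppose that $N(F,B)$ is large, say $N(F,B)\ge d^2+4$, and then extract enough vanishing conditions on $F$ to pin down its coefficients via the Siegel lemma (Lemma \ref{siegel}). Concretely, I would think of the coefficients of $F$ as unknowns: $F(X_1,X_2)=\sum_{e_1+e_2\le d, \, e_2\le n} c_{e_1,e_2}X_1^{e_1}X_2^{e_2}$, and note that $F$ being monic in $X_2$ means the coefficient of $X_2^n$ is $1$, so the genuinely unknown coefficients number roughly $N\approx d^2/2$ (I will be slightly generous and bound the count of monomials by something like $d^2+2$, leaving one normalization). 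Each point $(x_1,x_2)\in R(F,B)$ gives a linear equation $\sum c_{e_1,e_2}x_1^{e_1}x_2^{e_2}=0$ in these unknowns.

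First I would count monomials carefully: the number of monomials $X_1^{e_1}X_2^{e_2}$ with $e_1+e_2\le d$ is $\binom{d+2}{2}$, but exploiting monicity in $X_2$ removes one, and one should be a bit careful to get the clean bound $d^2+3$ appearing in the statement — this is where the precise combinatorics of the monomial set enters, and I would match it to whatever counting Walkowiak uses. If $N(F,B)\ge$ (number of free coefficients), then we have at least as many linear equations as unknowns. Now the point: $F$ itself is a nonzero solution of this homogeneous linear system (with the monic coefficient fixed), so the system has a nontrivial solution, and by the Siegel lemma there is a solution vector $\underline{c'}\in O_K^N\setminus\{0\}$ with height bounded by $C(C\,N\,H_0)^{M/(N-M)}$, where $H_0$ bounds the heights of the matrix entries $x_1^{e_1}x_2^{e_2}$, hence $H_0\le B^d$, and $M,N$ are both $O(d^2)$. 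The delicate point is that Siegel's lemma produces \emph{some} small solution, not necessarily $F$ itself; I would argue that the polynomial $F'$ built from $\underline{c'}$ shares all the roots in $R(F,B)$ with $F$, is of degree $\le d$, and — because $F$ is absolutely irreducible of degree $d$ — any polynomial of degree $\le d$ vanishing on "too many" points of the curve $F=0$ must be a scalar multiple of $F$ (using Bézout: if $F\nmid F'$ then $F$ and $F'$ share at most $d^2$ points, contradiction once $N(F,B)>d^2$). Hence $F'=\lambda F$ for some $\lambda\in\overline K$, and since both lie in $O_K$-coefficients with $F$ monic in $X_2$, comparing the leading $X_2^n$ coefficient gives $\lambda\in O_K$ (in fact we can extract $H(F)\le H(F')$ up to a controlled factor, or directly $H(F)=H(F')/H(\lambda)\le H(F')$).

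Combining, $H(F)\le H(F')\le C(C\,N\,B^d)^{M/(N-M)}$ with $N,M\le$ (something like $d^2+3$) and $M/(N-M)\le$ a bound I would arrange to be $\le 3d^2$ or so, giving after crude simplification $H(F)\le C^{5d^2}2^{8d^2}d^{8d^2}B^{4d^3}$. The main obstacle is bookkeeping: making the monomial count, the Siegel parameters $N,M,H_0$, and the final exponent arithmetic line up exactly with the clean constants $C^{5d^2}$, $2^{8d^2}$, $d^{8d^2}$, $B^{4d^3}$ in the statement — there is real work in choosing $M$ vs.\ $N$ (one wants $N-M$ not too small so the Siegel exponent stays bounded) and in verifying that exactly $d^2+3$, and not $d^2+4$ or $d^2+2$, is the right threshold for the Bézout argument together with the linear-algebra dimension count. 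I would follow the structure of \cite{walarticle} for the $K=\Q$ case and replace the rational-integer Siegel lemma by Lemma \ref{siegel}, checking that no step used properties of $\Z$ beyond those the lemma supplies over $O_K$; the heights of products $x_1^{e_1}x_2^{e_2}$ are handled by Proposition \ref{hauteval}(3), and the norm-height inequality $N_{K/\Q}(a)\le H(a)^{\dd}$ is not needed here but the multiplicativity of $H$ on conjugates is.
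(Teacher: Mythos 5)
Your approach is essentially the paper's: assuming $N(F,B) \ge d^2+4$, one forms the $R\times N$ evaluation matrix with $R=d^2+4$ bounded-height zeroes and $N=\binom{d+2}{2}$ monomials of degree $\le d$ as columns, observes the coefficient vector of $F$ is a nontrivial kernel element so the rank $M<N$, applies the Siegel lemma to get a small kernel vector defining $G$ of degree $\le d$ vanishing at those $d^2+4>d\cdot d$ points, and concludes by B\'ezout and absolute irreducibility that $G=aF$ with $a\in O_K$ by monicity, hence $H(F)\le H(G)\le C(CNB^d)^{M/(N-M)}\le C(CNB^d)^N$ with $N\le 4d^2$. Your concern about pinning down the threshold exactly is unfounded---the B\'ezout step only needs strictly more than $d^2$ common zeroes so $d^2+3$ is a convenient safe choice, and the paper keeps the linear system homogeneous (all $\binom{d+2}{2}$ coefficients as unknowns) rather than normalizing the monic coefficient away.
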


\begin{proof}
\indent Assume that $N(F,B)>d^2+3$. Set $R=d^2+4$, $N=(d+1)(d+2)/2$ and let $\underline{x_1},...,\underline{x_R}$ be $R$ zeroes of $F$ such that $H(x_{ij}) \leq B~(i=1,\cdots,R, ~~ j=1,2).$
\smallskip

\indent The total number of monomials of degree $\leq d$ in the indeterminates $X_{1}, X_{2}$ is $N$. %(denote them by $f_1,\cdots,f_N$). 
Let $A=(a_{i,j})$ be the $R \times N$ matrix of which the $i$-th line is composed of these $N$ monomials evaluated at $x_{i1},x_{i2}$ $i=1,\cdots,R$. The one column matrix $f \in O_K^N$, consisting of the coefficients of $F$ is a non trivial solution of the system $AX=0$.
\medskip

As $Af=0$, the rank of $A$, say $M$, is $<N$. Up to re-numbering the lines, we may assume that the system $AX=0$ is equivalent to its $M$ first lines.

\indent It follows from Lemma \ref{siegel} that the system has a non-zero solution $g \in O_K^N $ satisfying 
$$\max _{k=1,...,N} H(g_k) \leq C(CNB^{d})^{M/(N-M)}$$
(note that $H(a_{i,j})$ is bounded by $B^{d}$, $1\leq i \leq R$, $1\leq j\leq N$).
\smallskip

\indent Let $G(X_1,X_2)$ be the polynomial whose coefficients are the elements of $g$. $G$ is a non-zero polynomial of degree $\leq d$, its coefficients are in $O_K$, and it satisfies $G(x_{i1}, x_{i2})=0$ $(i=1,\cdots,R)$.
\medskip

By construction, the polynomials $F$ and $G$ have at least $d^2+4$ zeroes in common and are both of degree $\leq d$. By the Bezout theorem, these two polynomials are not relatively prime in $\overline{K}[X_1,X_2]$. As $F$ is irreducible and of degree $d$, we have $G=aF$ for some $a\in K$. Furthermore, as $F$ is monic in $X_2$, then $a\in O_K$ and $H(F) \leq H(G)$. Thus we have 
$$H(F)\leq H(G) \leq \max_{1\leq i\leq n} H(g_i) \leq C(CNB^{d})^{M/(N-M)} \leq C(CNB^{d})^{N}.$$
Note finally that $N\leq 4d^2$. Hence
$$H(F) \leq C^{5d^2} 2^{8 d^{2}}d^{8 d^2}B^{4 d^3}.$$
\end{proof}

%%%%%%%%%%%%%%%%%%%%%%%%%%%%%%%%%%%%%%%%%%%%%%%%%%%%%%%%%%%%%%%%%%%%%%%%%%%%%%%%%%%%%%%%%%%%%%%
%%%%%%%%%%%%%%%%%%%%%%%%%%%%%%%%%%%%majoration N(F,B)%%%%%%%%%%%%%%%%%%%%%%%%%%%%%%%%%%
%%%%%%%%%%%%%%%%%%%%%%%%%%%%%%%%%%%%%%%%%%%%%%%%%%%%%%%%%%%%%%%%%%%%%%%%%%%%%%%%%%%%%%%%%%%%
We can now finish the proof of theorem C. We deduce from Theorem \ref{HBE}, combined with the Bezout theorem that
$$N(F,B) \leq \sum_{i=1}^k \deg(F) \deg(F_i) \leq kdD \leq c_2 d^4 D \log^2(d^3H^+(F)B^{d-1}) (B^{d^{-1}+6D^{-1}})^\dd.$$

We recall that $D$ has to be chosen $\geq d$. We take $D=[d\log(B)+1]$. We have $B^{6(d\log(B))^{-1}} \leq 2^9$. We obtain :
$$N(F,B) \leq k_1 d^5 \log^2(d^3H^+(F)B^{d-1}) B^{\dd/d} \log(B)$$ where $k_1$ depends on $K$.
\smallskip

The bound $H(F) \leq C^{5d^2} 2^{8 d^{2}}d^{8 d^2}B^{4 d^3}$ from proposition \ref{indepH} gives :
$$N(F,B) \leq c_3 d^5 D \log^2(d^3 C^{5d^2} 2^{8 d^{2}}d^{8 d^2}B^{4 d^3} B^{d-1}) (B^{\dd/d} \log(B)).$$

Finally we obtain :
$$N(F,B) \leq c_5 d^{8} (\log B)^3 B^{\dd /d}.$$

\subsection{Proof of corollary C.}

\indent We work now with a polynomial $F(T,Y) \in O_K[T,Y]$ monic in $Y$ and  irreducible in $K[T,Y]$. We will estimate the number $N_{T}(F,B)$ of $t\in O_K$ such that $H(t) \leq B$ and the specialized polynomial $F(t,Y)$ has a root $y$ in $K$ (or, equivalently, in $O_K$ as $F(T,Y)$ is monic in $Y$). We recall that $m,n$ and $d$ are respectively the degree in $T$, $Y$ and the total degree of $F$.
\smallskip 

The following lemma based on the Liouville inequality, shows how to bound $H(y)$.

\begin{lemme}\label{liouv}
For all $t\in O_K$ but at most $m$ of them, the height of any $y\in O_K$ such that $F(t,y)=0$ is bounded by $ 2 (m+1) H(F)H(t)^m$.
\end{lemme}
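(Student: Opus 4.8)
The plan is to read a bound for the archimedean valuations of $y$ directly off the equation $F(t,y)=0$, using that $F$ is monic in $Y$. Write $F(T,Y)=Y^{n}+a_{n-1}(T)Y^{n-1}+\cdots+a_{0}(T)$ with $a_i\in O_K[T]$; since $\deg_T F=m$, each $a_i$ has $T$-degree $\le m$, hence at most $m+1$ non-zero coefficients, each of which is a coefficient of $F$, so $H^{+}(a_i)\le H(F)$. For every $t\in O_K$ the specialization $F(t,Y)=Y^{n}+a_{n-1}(t)Y^{n-1}+\cdots+a_{0}(t)\in O_K[Y]$ is still monic of degree $n$.

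First I would invoke Cauchy's bound on the roots of a monic polynomial: over a field with an archimedean absolute value $|\cdot|$, any root $y$ of $Y^{n}+c_{n-1}Y^{n-1}+\cdots+c_{0}$ satisfies $|y|\le 1+\max_{0\le i\le n-1}|c_i|$ (if $|y|>1$ then $|y|^{n}\le\max_i|c_i|\,(|y|^{n}-1)/(|y|-1)<\max_i|c_i|\,|y|^{n}/(|y|-1)$, forcing $|y|-1\le\max_i|c_i|$; and if $|y|\le1$ it is trivial). For each embedding $\sigma\colon K\hookrightarrow\overline{K}$, applying $\sigma$ to $F(t,y)=0$ exhibits $\sigma(y)$ as a root of the monic polynomial $Y^{n}+\sigma(a_{n-1}(t))Y^{n-1}+\cdots+\sigma(a_{0}(t))$, whence $|\sigma(y)|\le 1+\max_i|\sigma(a_i(t))|$. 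Taking the maximum over $\sigma$ gives $H(y)\le 1+\max_{0\le i\le n-1}H(a_i(t))$.

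Second, Proposition \ref{hauteval}(3) applied to each $a_i$ (at most $m+1$ terms, $T$-degree $\le m$, $H^{+}(a_i)\le H(F)$) yields $H(a_i(t))\le (m+1)H(F)H(t)^{m}$, and hence $H(y)\le 1+(m+1)H(F)H(t)^{m}\le 2(m+1)H(F)H(t)^{m}$, the last inequality because $(m+1)H(F)H(t)^{m}\ge1$. I do not foresee a real obstacle: this is height bookkeeping whose only genuine input is Cauchy's root bound, and the single point to watch is that the exponent is the $T$-degree $m$ of $F$, not its total degree, which is exactly what produces the $H(t)^{m}$. Note that, $F$ being monic in $Y$, the argument needs no hypothesis on $t$ at all, so the bound holds for every $t\in O_K$ and a fortiori for all but at most $m$ of them; the clause about $m$ exceptional values is the one required when the leading coefficient of $F$ in $Y$ is not normalised to $1$, in which case one also discards the $\le m$ zeros of that coefficient.
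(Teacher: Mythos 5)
Your proof is correct and follows essentially the same route as the paper: the paper cites the Liouville inequality $H(x)\leq 2H(P)$ for $P(x)=0$ and then bounds $H(F(t,Y))$ coefficient by coefficient, which is exactly the Cauchy-bound-plus-height-bookkeeping argument you carry out explicitly. Your closing observation that the ``at most $m$ exceptional $t$'' clause is vacuous when $F$ is monic in $Y$ (so that the paper's discard of zeros of the leading $Y$-coefficient is unnecessary under the stated hypothesis) is accurate and a minor improvement in clarity, not a different method.
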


\begin{proof}
We will use the Liouville inequality given in this form : if $P \in K[X]$ and $x\in \overline{K}$ such that $P(x)=0$, then $H(x) \leq 2H(P)$. \smallskip

 Write $F(T,Y)=a_0(T)Y^n+\cdots+a_n(T)$. Clearly we have $$\deg(a_i(T))\leq m \text{ and } H(a_i(T))\leq H(F),~ i=1,\cdots,n.$$ 
For $t\in O_K$ such that $a_0(t)\neq 0$ (the number of such $t$ is $\leq m$), the height of every solution $y\in O_K$ of the equation $F(t,Y)=0$ satisfies :
$$ H(y) \leq 2H(F(t,Y)).$$

We have $H(F(t,Y)) = H(a_0(t), \cdots , a_n(t)) =  \displaystyle \max_{1\leq i \leq n} \max _{v \in M_K} |a_i(t)|_v$ and
\begin{itemize}
\item for an archimedean place $v$, 
\begin{align*}
|a_i(t)|_v & \leq (m+1) \max_{1\leq i \leq n}(|a_{ij}|_v) \max (1,|t|_v)^m \\
& \leq (m+1) H_v(F) \max (1,|t|_v)^m,
 \end{align*}
\item for a finite place $v$, $|a_i(t)|_v \leq H_v(F) \max (1,|t|_v)^m.$
\end{itemize}
This yields
\begin{align*} \displaystyle \max_{1\leq i \leq n} \max_{v\in M_K} |a_i|_v & \leq (m+1) \max_{v\in M_K} H_v(F) \max_{v \in M_K} \max(1,|t|_v)^{m} \\
& \leq (m+1) H(F)H(t)^m
\end{align*}
This concludes the proof.
\end{proof}

Lemma \ref{liouv} gives $N_T(F,B) \leq N(F,B')$ with $B'=2(m+1)HB^m$. However, in order to obtain the right conclusion, we will apply this inequality, not to $F$, but to some polynomial $G$ deduced from $F$ by some change of variables. More precisely, we proceed as follows.

%%%%%%%%%%%%%%%%%%%%%%%%%%%%%%%%%%%%%%%%%%%%%%%%%%%%%%%%%%%%%%%%%%%%%%%%%%%%%%%%%%%%%%%%%%%%%%%%%%%%%%%%%%%%%%%%%%%%%%%%%%%%%%%%%%%%%%%%%%%%%%%%%%%%%%%%%%%%%%%%%%%%%%%%%%%%
%%%%%%%%%%%%%%%%%%%%%%%%%%%%%%%%%%%%%%%%%%%%%%%%%%%%%%%%%%%%%%%%%%%%%%%%%%%%%%%%%%%%%%%%%%%%%%%%
\begin{proof} [Proof of Corollary C]
We work with the $t\in O_K$ such that lemma \ref{liouv} can be applied. Adding the number of exceptional $t$ will only change the constant that appears in the final bound.
\smallskip

Let $H=\max(e^e, H(F))$, $L_1:=\log(H)$ and $L_2:=\log(\log(H))$. We have $L_2 \geq 1$. As $F(T,Y)$ is monic in $Y$, we have $d\leq n+m-1$. We may and will assume that $m\geq 1$ and $n\geq 1$. In particular $d\leq mn < mnL_1/L_2$.
\smallskip

Consider the polynomial 
$$G(T,Y)=F(T,T^E+Y)$$
 where $E=[mn\frac{L_1}{L_2}]+1 \leq 2mnL_1$. This polynomial is of degree $d'\in[nE,nE+m]$    and every zero $(t,y)\in O_K^2$ of $F$ corresponds to a zero  $(t,y')=(t,y-t^E)\in O_K^2$ of $G$.
Using the inequality $$H(a+b)\leq H(a) + H(b),$$ for every zero $(t,y')$ of $G$ such that $H(t)\leq B$, we have $$H(y') \leq (m+1)HB^m + B^E \leq  2(m+1)HB^{E}.$$
Thus, defining $B''= 2(m+1)HB^{E}$, we have $$N_T(F,B) \leq N(F,B')\leq N(G,B'').$$
Now use theorem C with $G$ and $B''$:
\begin{align*}
N(G,B'') &  \leq c_5 d'^{8} \log^{3}(B'') (B'')^{\dd /d'} \\
&  \leq c_5 (nE+m)^{8} \log^{3}(2(m+1)HB^E) (2(m+1)HB^{E})^{\dd /nE} \\
& \leq c_5 (nE+m)^{8} \log^{3}(2(m+1)HB^E) (2(m+1)H)^{\dd/ nE} B^{\dd /n}
\end{align*}
We have $E\leq 2d^2 \log H$, and as $1/nE \leq L_2/L_1$, we have $H^{1/nE} \leq log(H)$. Thus

$$ N(G,B'') \leq c_5(3d^3\log H)^{8}(4d^3 \log H \hskip 2pt \log B)^{3} (2^{\dd}d^{\dd} \log^{\dd} H) B^{\dd/n}.$$ 

Finally, we obtain $$N_T(F,B)\leq c_6d^{33+\dd}(\log H)^{11+\dd} B^{\dd/n} \hskip 2pt (\log B)^{3+\dd}.$$

\end{proof}

\section{Proof of theorem \ref{SPECTHM} and theorem 2.2.} \label{sectionresultinter}

\subsection{Proof of theorem \ref{SPECTHM}} \label{sectionspecthm}

Return to the situation of \S 2.1 : a regular Galois extension $F/K(T)$ of group $G$ is given. 
\smallskip

Fix a good prime $\ip$ for $F/K(T)$ and an associated union $\frob_{\ip}$ of conjugacy classes of $G$. The following result generalizes \cite[proposition 5.1]{debconjmalle}, proved in the case $K=\Q$.

\begin{prop}\label{cosets}
The set $$\tau(\frobp):=\{t_0 \in O_K \mid ~ t_0 \notin \mathbf{t} \mod \ip, ~~ {\rm Frob}_{\ip} (F_{t_0}/K) \in \frobp\}$$ is a union of cosets modulo $\ip$ and the number $\nu(\frobp)$ of these cosets satisfies
\begin{align*}
\nu(\frobp) & \geq \frac{|\frobp|}{|G|}\times (q+1-2g\sqrt{q}-|G|(r+1)) \\
\nu(\frobp) & \leq \frac{|\frobp|}{|G|}\times (q+1+2g\sqrt{q})
\end{align*}
where $q=N_{K/\Q}(\ip)$.
\end{prop}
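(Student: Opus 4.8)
\emph{Sketch.} The plan is to transfer everything to the reduction of the cover modulo $\ip$, and then to count $\F_q$-points by Weil's bound applied to suitable twists of that reduced cover.

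\textbf{Reduction to the residue field.} Since $\ip$ is good for $F/K(T)$, the cover $f\colon X\to\PP$ associated with $F/K(T)$ has good reduction at $\ip$: it reduces to a $G$-Galois cover $\bar f\colon\bar X\to\PP_{\F_q}$, where $\F_q=O_K/\ip$ and $q=N_{K/\Q}(\ip)$, with $\bar X$ smooth, projective and geometrically connected (this uses the $K$-regularity of $F/K(T)$) of genus $g$, and with $r$ branch points whose reductions stay pairwise distinct (\'etaleness of the branch divisor). The specialization theory at good primes (\cite[proposition 5.1]{debconjmalle}, \cite{debgazi}, \cite{legrandspec}) then tells us that for $t_0\in O_K$ with $t_0\not\equiv\mathbf{t}\bmod\ip$ the extension $F_{t_0}/K$ is unramified at $\ip$ and ${\rm Frob}_\ip(F_{t_0}/K)$ equals the conjugacy class of the Frobenius ${\rm Frob}_{\bar t_0}$ of $\bar f$ at the reduction $\bar t_0\in\mathbb{A}^1(\F_q)$ of $t_0$. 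In particular the two conditions defining $\tau(\frobp)$ depend on $t_0$ only through $\bar t_0$, so $\tau(\frobp)$ is a union of cosets modulo $\ip$, and, identifying cosets with elements of $\F_q=\mathbb{A}^1(\F_q)$,
$$\nu(\frobp)=\#\bigl\{\bar t_0\in\mathbb{A}^1(\F_q)\ :\ \bar t_0\ \text{unramified},\ {\rm Frob}_{\bar t_0}\subseteq\frobp\bigr\}=\sum_{C\subseteq\frobp}N_C,$$
where the sum runs over the conjugacy classes $C$ of $G$ contained in $\frobp$ and $N_C$ counts the unramified $\bar t_0\in\mathbb{A}^1(\F_q)$ with ${\rm Frob}_{\bar t_0}=C$. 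Since $|\frobp|=\sum_{C\subseteq\frobp}|C|$, it suffices to prove the bounds
$$\frac{|C|}{|G|}\bigl(q+1-2g\sqrt q-|G|(r+1)\bigr)\ \le\ N_C\ \le\ \frac{|C|}{|G|}\bigl(q+1+2g\sqrt q\bigr)$$
for each conjugacy class $C$, and then add them up over $C\subseteq\frobp$.

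\textbf{Counting $N_C$ by a twist.} Pick $\sigma\in C$, view it as an $\overline{\F_q}$-automorphism of $\bar X$ over $\PP$, and let $\bar X_\sigma$ be the form of $\bar X$ over $\F_q$ whose $q$-power Frobenius is $\sigma\circ\phi$, where $\phi$ is the Frobenius of $\bar X$. Then $\bar X_\sigma$ is smooth, projective, geometrically isomorphic to $\bar X$ (hence geometrically connected of genus $g$), and $\bar f$ descends to an $\F_q$-morphism $\bar X_\sigma\to\PP$ because $\sigma$ acts trivially on $\PP$. Parametrising the fibre of $\bar f$ over an unramified $\bar t_0\in\mathbb{A}^1(\F_q)$ by the deck group $G$ and using $\phi(g\cdot\bar x_0)=g\cdot\phi(\bar x_0)$, one checks that the fibre of $\bar X_\sigma$ over $\bar t_0$ contains exactly $|G|/|C|$ points of $\bar X_\sigma(\F_q)$ when ${\rm Frob}_{\bar t_0}=C$ and none otherwise (after fixing an orientation so that the twist by $\sigma$ detects the class $C$ rather than $C^{-1}$; this changes nothing, as $|C|=|C^{-1}|$). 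Summing over $\bar t_0\in\mathbb{A}^1(\F_q)$,
$$\frac{|G|}{|C|}\,N_C\ =\ \#\bar X_\sigma(\F_q)\ -\ \#\bigl\{\bar x\in\bar X_\sigma(\F_q)\ :\ \bar f(\bar x)\in\{\infty\}\cup\mathbf{t}\bigr\}.$$
By the Weil bound $\bigl|\#\bar X_\sigma(\F_q)-(q+1)\bigr|\le 2g\sqrt q$; and since $\{\infty\}\cup\mathbf{t}$ has at most $r+1$ points in $\PP(\F_q)$, each with at most $|G|$ preimages in $\bar X_\sigma$, the subtracted quantity lies in $[0,(r+1)|G|]$. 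This yields the two displayed inequalities on $N_C$, and summing over $C\subseteq\frobp$ completes the proof.

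\textbf{Main difficulty.} The substantive part is the counting step: one must make sure that the twisted curve $\bar X_\sigma$ is again geometrically connected of genus exactly $g$ (so that Weil applies with the genus of $F$), fix the orientation conventions so that the twist by $\sigma$ really sees the Frobenius class $C$, and carefully bookkeep the finitely many fibres lying over $\infty$ and over the branch points — the crude size bound on those fibres being exactly what produces the $|G|(r+1)$ error term. The first step, by contrast, is essentially a citation: it rests on the good-reduction and specialization results of \cite{debgazi}, \cite{debconjmalle}, \cite{legrandspec}, which apply precisely because $\ip$ was chosen good for $F/K(T)$.
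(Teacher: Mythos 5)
Your proposal is correct, and it is essentially the argument the paper is relying on: the paper omits the proof, noting only that it "consists in changing the prime number $p$ to the prime ideal $\ip$ in the proof of \cite[proposition 5.1]{debconjmalle}," and that reference's proof is precisely the good-reduction-plus-twist-plus-Weil-bound argument you reconstruct (reduce $f\colon X\to\PP$ mod $\ip$, twist the reduced cover by a representative $\sigma$ of each class $C\subseteq\frobp$, count $\F_q$-points on the twist via Weil with genus $g$, divide by $|G|/|C|$ to get $N_C$, absorb the fibres over $\infty$ and the $r$ branch points into the $|G|(r+1)$ error, and sum over $C$). Your bookkeeping of the fibre size $|C_G(\sigma)|=|G|/|C|$, the genus preservation under twisting, and the $(r+1)|G|$ error term all match; so this is the paper's proof, just written out.
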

We omit the proof which merely consists in changing the prime number $p$ to the prime ideal $\ip$ in the proof of \cite[proposition 5.1]{debconjmalle}.
\smallskip

Consider the prime numbers $p_0$ and $p_{-1}$ given in \S 2. Let $x>0$ be a real number. Let $S_{[p_0,x]}$ be the set of all primes of $K$ over the interval $[p_0,x]$ and let $\frob_{[p_0,x]}$ be a Frobenius data on $S_{[p_0,x]}$. Next, with $S_{]p_{-1},p_0[}$ consisting of the prime ideals over the interval $]p_{-1}, p_0[$, set $S_x=S_{[p_0,x]} \cup S_{]p_{-1},p_0[}$. 
\smallskip

Consider the Frobenius data $\frob_x$ on $S_x$ obtained by adding to the Frobenius data $\frob_{[p_0,x]}$ some local conditions over the primes of $K$ over the interval $]p_{-1},p_0[$ in this manner : to every conjugacy class of $G$, we associate a prime $\ip \in S_{]p_{-1},p_0[}$ in such a way that every conjugacy class of $G$ appears in the Frobenius data $\frob_x$; for the other ideals in $S_{]p_{-1},p_0[}$, we take $\frob_{\ip}=G$ ($\frob_{\ip}$ can be chosen arbitrary). Set $I=\displaystyle \prod_{\ip\in S_x}\ip$.

\begin{lemme}\label{ideal}
We have $I=\displaystyle \prod_{1\leq i \leq n} (p_i O_K)$ and $I \cap \Z = (p_1\cdots p_n)\Z$. Denote by $\{p_1, \cdots, p_n\}$ the set of prime numbers in the interval $]p_{-1},x]$.
\end{lemme}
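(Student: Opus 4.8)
The plan is to reduce the whole statement to the factorisation of rational primes in $O_K$. First I would recall the defining property of $p_{-1}$ chosen in \S 2: every prime number ramified in $K/\Q$ is $\leq p_{-1}$. Hence every prime number $p$ lying in $]p_{-1},x]$ is unramified in $K/\Q$, so that $pO_K=\prod_{\ip\mid p}\ip$ is a product of pairwise distinct prime ideals, each occurring with exponent $1$.

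Next I would unwind the definition of the index set. By construction $S_x=S_{[p_0,x]}\cup S_{]p_{-1},p_0[}$ is exactly the set of all prime ideals of $O_K$ lying above the prime numbers contained in $]p_{-1},x]$; write $p_1,\dots,p_n$ for those prime numbers. Since distinct $p_i$'s have disjoint sets of prime ideals above them, grouping the factors of $I$ according to the rational prime they lie over gives
$$I=\prod_{\ip\in S_x}\ip=\prod_{i=1}^{n}\Bigl(\prod_{\ip\mid p_i}\ip\Bigr)=\prod_{i=1}^{n}(p_iO_K)=(p_1\cdots p_n)O_K,$$
which is the first assertion.

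For the second assertion I would use that $\Z$ is integrally closed in $\Q$, i.e.\ $O_K\cap\Q=\Z$. If an integer $a$ lies in $I=(p_1\cdots p_n)O_K$, then $a/(p_1\cdots p_n)\in O_K\cap\Q=\Z$, so $a\in(p_1\cdots p_n)\Z$; the reverse inclusion $(p_1\cdots p_n)\Z\subseteq I\cap\Z$ is immediate. Therefore $I\cap\Z=(p_1\cdots p_n)\Z$.

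I do not anticipate a genuine obstacle: the only step using anything beyond elementary algebraic number theory is the identity $pO_K=\prod_{\ip\mid p}\ip$, which is precisely where the choice of $p_{-1}$ (forcing all primes in $]p_{-1},x]$ to be unramified in $K/\Q$) enters, and everything else is bookkeeping about products of ideals.
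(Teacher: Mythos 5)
Your proof is correct. The first assertion is handled exactly as in the paper: the choice of $p_{-1}$ guarantees that all primes in $]p_{-1},x]$ are unramified in $K/\Q$, so $p_iO_K=\prod_{\ip\mid p_i}\ip$ and grouping the factors of $I$ by the rational prime below gives $I=(p_1\cdots p_n)O_K$. For the second assertion you take a slightly different (and arguably cleaner) route: you divide $a\in I\cap\Z$ by $p_1\cdots p_n$ and invoke $O_K\cap\Q=\Z$, whereas the paper argues that $I\cap\Z$ is a principal ideal $a\Z$ of $\Z$, deduces $p_i\mid a$ from $I\cap\Z\subset p_iO_K\cap\Z=p_i\Z$ for each $i$, and then uses coprimality of the $p_i$. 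Both arguments are elementary, equally short, and both rest on the same essential facts about $\Z$ and $O_K$; there is no real difference in content.
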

\begin{proof}
The set $S_x$ is the set of all prime ideals of $O_K$ over $p_1,\cdots,p_n$. Using that all primes $\ip \in S_x$ are unramified in $K$ from the definition of $p_{-1}$, we obtain
$$I=\displaystyle \prod_{\ip/p_1}\ip \cdots \prod_{\ip/p_n}\ip=(p_1O_K)\cdots (p_nO_K).$$
%=\prod_{1\leq i\leq n}(p_iO_K).$$
For $i\in\{1,\cdots,n\}$, we have $p_i O_K \cap \Z =p_i\Z$. 
The next argument shows that $$ (p_1 O_K) \cdots (p_n O_K) \cap \Z = (p_1 \cdots p_n)\Z .$$

\indent Inclusion $\supset$ is obvious : $p_1 \ldots p_n \in (p_1O_K) \ldots (p_nO_K) \cap \Z$. As $\Z$ is a P.I.D, the ideal $(p_1 O_K) \ldots (p_n O_K) \cap \Z$ is of the form $a \Z$ for some $a\in \Z$. From $(p_1 O_K) \ldots (p_n O_K) \cap \Z \subset p_i O_K \cap \Z$, we deduce that $p_i \mid a$, $i=1, \cdots, n$. As $p_1, \cdots ,  p_n$ are distinct, $p_1 \ldots p_n \mid a$ whence the desired inequality.
\end{proof}

Denote the intersection $\displaystyle \bigcap_{\ip \in S_x} \tau(\frobp)$ by $\tau(S_x,\frob_x)$. It follows from the Chinese remainder theorem that $\displaystyle \tau(S_x,\frob_x)$ contains $\mathcal{N}(S_x,\frob_x)=\displaystyle \prod_{\ip \in S_x} \nu(\frobp)$ cosets modulo $I$. The following proposition is a more precise and more technical form of theorem \ref{SPECTHM}. It involves the following notation.

\begin{itemize}
\item for a Frobenius data $\frob_S= (\frob_{\ip})_{\ip\in S}$, as in \S 2.1, the density of $\frob_S$,  denoted by $\chi(\frob_S)$, is the product of all  $|\frob_{\ip}|/|G|$ for $\ip \in S$,
\item for a positive real number $x$, the number $\pi(x)$ is the number of primes $\leq x$ and $\Pi(x)$ is the product of all prime numbers $p\leq x$. Recall that $\displaystyle \pi(x)\sim x/\log x \text{ and } \log (\Pi(x)) \sim x$ when $x\rightarrow +\infty$.
\item  for a set $S$ of prime ideals in $O_K$, the number $\Pi(S)$ is the product of all primes numbers $p$ such that $p=p_{\ip}$ for some prime ideal $\ip \in S$\footnote{Recall that $p_{\ip}$ denote the prime number such that $\ip \cap \Z=p\Z$.}.
\end{itemize}

\begin{prop}\label{representants}
1. If $t_0 \in O_K$ is any representative of one of the cosets modulo $I$ in $\tau(S_x,\frob_x)$ then ${\rm Gal}(F_{t_0}/K)=G$ and $t_0\in \tau(\frobp)$ for each $\ip \in S_x$. %, $F_{t_0}/K$ is unramified at each $\ip\in S_x$ and $\rm{Frob}_{\ip}(F_{t_0}/K)\in \frobp$ for every $\ip \in S_x$.
\smallskip

\noindent 2. If $x$ is suitably large, $$ \mathcal{N}(S_x,\frob_x) \geq \displaystyle \chi(\mathcal{F}_x) \times \frac{\Pi(x)^{\dd}}{(\Pi(p_{-1}))^{\dd}} \times (\frac{1}{2r|G|})^{\dd \pi(x)}.$$
3.Fix a $\Z$-basis $\underline{e}=(e_1,\cdots,e_n)$ of $O_K$ and denote by $H(\underline{e})$ the height of $\underline{e}$. For every coset modulo $I$ in $\tau(S_x,\frob_x)$, there exists a representative $t_0\in O_K$ of height $H(t_0)\leq \displaystyle \frac{\dd H(\underline{e})}{\Pi(p_{-1})} \Pi(x)$.

\end{prop}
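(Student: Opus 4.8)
The plan is to establish the three assertions in turn: the first is the only conceptual point, the second is a product estimate built on Proposition \ref{cosets}, and the third is an elementary reduction of coordinates.

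\textbf{Part 1.} Since $F/K(T)$ is $K$-regular Galois of group $G$, for any $t_0\in O_K$ that is not a branch point the specialization $F_{t_0}/K$ is Galois with group a subgroup $D\leq G$ (a decomposition group inside $G$); moreover, for a good prime $\ip$ with $t_0\notin\mathbf{t}\bmod\ip$ the extension $F_{t_0}/K$ is unramified at $\ip$ and ${\rm Frob}_{\ip}(F_{t_0}/K)$ is a well-defined conjugacy class of $G$ contained in $D$. Now let $t_0$ represent a coset in $\tau(S_x,\frob_x)=\bigcap_{\ip\in S_x}\tau(\frobp)$. By the very definition of $\tau(\frobp)$ we have $t_0\notin\mathbf{t}\bmod\ip$ and ${\rm Frob}_{\ip}(F_{t_0}/K)\in\frobp$ for every $\ip\in S_x$; in particular $t_0$ is not a branch point, which already proves the second assertion. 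For the first, recall that $\frob_x$ was built so that, as $\ip$ runs over $S_{]p_{-1},p_0[}$, the set $\frobp$ runs over \emph{every} conjugacy class $C$ of $G$. For the prime $\ip$ with $\frobp=C$ the containment ${\rm Frob}_{\ip}(F_{t_0}/K)\subseteq C$ forces $C$ to meet $D$; hence $D$ meets every conjugacy class of $G$, and since $\#\bigcup_{g\in G}gDg^{-1}\leq |G|-[G:D]+1<|G|$ whenever $D\neq G$ (Jordan's lemma), we conclude $D=G$, i.e. ${\rm Gal}(F_{t_0}/K)=G$.

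\textbf{Part 2.} Here I multiply the lower bound of Proposition \ref{cosets} over all $\ip\in S_x$. By definition of the density, $\prod_{\ip\in S_x}|\frobp|/|G|=\chi(\frob_x)$. Next, $S_x$ consists of all primes of $K$ over the primes $p\in\,]p_{-1},x]$, and all such $p$ are unramified in $K/\Q$ by the choice of $p_{-1}$; hence $\prod_{\ip\in S_x}N_{K/\Q}(\ip)=\prod_{p\in\,]p_{-1},x]}p^{\dd}=(\Pi(x)/\Pi(p_{-1}))^{\dd}$. Finally one dominates the Weil error term: writing $q=N_{K/\Q}(\ip)$, we have $q>p_{-1}\geq r^2g^2$, and an elementary inequality — valid once $q$ is large compared to $|G|,r,g$, hence (after enlarging $p_{-1}$ if needed) for all $q>p_{-1}$ — gives $q+1-2g\sqrt{q}-|G|(r+1)\geq q/(2r|G|)$, so that $\nu(\frobp)\geq(|\frobp|/|G|)\cdot q/(2r|G|)$. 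Taking the product, and using that $\#S_x\leq\dd\,\pi(x)$ while $1/(2r|G|)\leq 1$, yields
$$\mathcal{N}(S_x,\frob_x)=\prod_{\ip\in S_x}\nu(\frobp)\geq\chi(\frob_x)\,\Big(\frac{\Pi(x)}{\Pi(p_{-1})}\Big)^{\dd}\Big(\frac{1}{2r|G|}\Big)^{\dd\,\pi(x)}.$$

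\textbf{Part 3 and main obstacle.} By Lemma \ref{ideal}, $I=N\,O_K$ with $N=\prod_{p\in\,]p_{-1},x]}p=\Pi(x)/\Pi(p_{-1})\in\Z$. Given a coset $a+I$, write $a=\sum_{i=1}^{n}a_ie_i$ with $a_i\in\Z$ in the fixed basis, and set $t_0=\sum_{i=1}^{n}b_ie_i$ with $b_i\equiv a_i\pmod{N}$ and $|b_i|\leq N$; then $t_0\equiv a\pmod{N O_K}$, and for every embedding $\sigma$ of $K$ one has $|\sigma(t_0)|\leq\sum_i|b_i|\,|\sigma(e_i)|\leq N\,\dd\,\max_i|\sigma(e_i)|$, whence $H(t_0)\leq\dd\,H(\underline{e})\,N=\dfrac{\dd\,H(\underline{e})}{\Pi(p_{-1})}\,\Pi(x)$. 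The one genuinely delicate step is Part 1: one must keep track that ${\rm Gal}(F_{t_0}/K)$ is \emph{a priori only} a subgroup of $G$, and see that the conjugacy classes imposed over $S_{]p_{-1},p_0[}$ are exactly what forces this subgroup, via Jordan's lemma, to be all of $G$. In Part 2 the secondary technical care is to keep the estimate of Proposition \ref{cosets} away from degeneracy — this is the role of the hypothesis $p_{-1}\geq r^2g^2$ and of taking $x$ (and, if necessary, $p_{-1}$) suitably large; Part 3 is routine.
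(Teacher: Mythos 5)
Your proof is correct and follows essentially the same route as the paper's: Jordan's lemma (a subgroup meeting every conjugacy class is the whole group) for Part 1, multiplying the coset-count bound of Proposition \ref{cosets} over $\ip\in S_x$ together with a uniform lower bound $1/(2r|G|)$ on the Weil error factor for Part 2, and reducing coordinates modulo the rational integer generating $I$ (via Lemma \ref{ideal}) in the fixed $\Z$-basis for Part 3. The only small imprecision is the remark ``after enlarging $p_{-1}$ if needed'' in Part 2: $p_{-1}$ is a fixed datum from \S 2.1 and cannot be adjusted inside this proof, so one must instead observe, as the paper does, that the already-imposed lower bound on $p_{-1}$ (together with the Riemann--Hurwitz bound $g<r|G|/2-1$) is what makes the elementary inequality $q+1-2g\sqrt{q}-|G|(r+1)\geq q/(2r|G|)$ hold for every $q=N_{K/\Q}(\ip)$ with $\ip\in S_x$.
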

\begin{proof}
1. From the definition of $\tau(S_x,\frob_x)$, we have that ${\rm Frob}_{\ip}(F_{t_0}/K)\in \frobp$ for every $\ip \in S_x$. From the Frobenius condition on the primes of $S_{]p_{-1},p_0[} \subset S_x$, the subgroup ${\rm Gal}(F_{t_0}/K)\subset G$ meets all conjugacy classes of $G$, so it is the whole group $G$ by a lemma of Jordan \cite{jordan}.
\medskip

2. Using proposition \ref{cosets}, we have, for $q=N(\ip)$ with $\ip \in S_x$.
\begin{align*}
 \mathcal{N}(S_x,\frob_x) & = \prod_{\ip \in S_x} \nu(\frobp) \\
 & \geq \prod_{\ip \in S_x} \frac{|\frobp|}{|G|}\times (q+1-2g\sqrt{q}-|G|(r+1)) \\
 & \geq \chi(\frob_x) \times \prod_{\ip \in S_x} q \times \prod_{\ip \in S_x}(1+\frac{1}{q}-\frac{2g}{\sqrt{q}}-\frac{(r+1)|G|}{q})
\end{align*}

\noindent As in \cite{debconjmalle}, using that $g<r|G|/2-1$ (if $|G|>1$; from the Riemann-Hurwitz formula) and that $q\geq r^2 |G|^2$ for each $\ip \in S_x$ (from the choice of $p_{-1}$), we have $$\displaystyle 1+\frac{1}{q}-\frac{2g}{\sqrt{q}}-\frac{(r+1)|G|}{q} \geq \frac{1}{2r|G|}.$$

As all primes in $S_x$ are unramified, we have $\displaystyle \prod_{\ip \in S_x} N(\ip)=\Pi(S_x)^{\dd}= \displaystyle \left(\frac{\Pi (x)}{\Pi( p_{-1})}\right)^{\dd}$ and \\ ${\rm card} (S_x) \leq \dd \pi(x)$. Hence, we obtain
$$\mathcal{N}(S_x,\frob_x) \geq \displaystyle \chi(\mathcal{F}_x) \times \frac{(\Pi(x))^{\dd}}{(\Pi(p_{-1}))^{\dd}} \times \left(\frac{1}{2r|G|}\right)^{\dd \pi(x)}.$$

\medskip

3. We have $O_K=\{\displaystyle \sum_{i=1}^{\dd} m_i.e_i ~~\mid ~~ m_i \in \Z ~~  i =1,\cdots,\dd \}$ and so $$O_K/I =\{\sum_{i=1}^{\dd}\overline{m_i}.\overline{e_i} \mid \overline{m_i} \in \Z /\Z \cap I ~~ i= 1,\cdots,\dd \}.$$

From lemma \ref{ideal}, $\Z/\Z\cap I= \Z/\Pi(S_x)\Z$. 
Every coset modulo $I$ in $\tau(S_x,\frob_x)$  has a representative $t=\displaystyle \sum_{i=1}^{\dd} m_i.e_i$ in $O_K$ such that $1\leq m_i \leq \Pi(S_x)$, $i=1,\cdots,\dd$. Next we have 
\begin{itemize}
\item for each finite place $v$, $|t|_v= \displaystyle \left|\sum_{i=1}^{\dd} m_i.e_i\right| \leq \max_{1\leq i\leq \dd}(|e_1|_v,\cdots,|e_{\dd}|_v)$.
\item for each archimedean place $v$, $|t|_v =\displaystyle \left|\sum_{i=1}^{\dd} m_i.e_i\right| \leq \dd \Pi(S_x) \max_{1\leq i\leq \dd}(|e_1|_v,\cdots,|e_{\dd}|_v)$
\end{itemize}
Whence $H(t)\leq \dd \Pi(S_x) H(e_1,\cdots,e_{\dd})=\displaystyle \frac{\dd H(\underline{e})}{\Pi(p_{-1})}\Pi(x).$
\end{proof}

\textbf{Proof of theorem \ref{SPECTHM}.}
For a positive number $B$, we let $x=p_B$ be the biggest prime number such that $\Pi(p_B).p_B \leq B$. Denote by $q_B$ the next prime number. As $\Pi(q_B)=\Pi(p_B).q_B$, we have 

$$p_B \Pi(p_B) \leq B < q_B^2 \Pi(p_B)\leq 4p_B^2 \Pi(p_B);$$

\noindent the last inequality uses the classical estimate $q_B\leq 2p_B$.\smallskip

Taking the log of these terms yields $$\frac{\log(\Pi(p_B))}{p_B} + \frac{\log p_B}{p_B} \leq \frac{\log B}{p_B} \leq \frac{\log (\Pi(p_B))}{p_B}+ \frac{2 \log 2p_B}{p_B}$$
which shows that $$p_B \sim \log B ~~\text{ when }B \rightarrow \infty.$$

Take a number $B$ which satisfies the following conditions :
\begin{itemize}
\item $\frac{\log B}{2}\leq p_B \leq 2\log B$,
\item $p_B \geq \displaystyle \frac{\dd H(\underline{e})}{H(p_{-1})}$,
\item $\pi(p_B) \leq 2 \log B /\log \log B$.
\end{itemize}
It suffices to take $B$ suitably large depending on $K$, $H(\underline{e})$, $H(p_{-1})$. \smallskip

As in theorem \ref{SPECTHM}, let $S_B$ be the set of primes of $K$ over the interval $[p_0, \log B /2]$. The interval $[p_0, \log B/2]$ is contained in the interval $[p_0, p_B]$ of proposition \ref{representants}. Let $\mathcal{F}_B$ be a Frobenius data on $S_B$. Extend it to a Frobenius data $\mathcal{F}_x=\mathcal{F}_{p_B}$ on the set $S_x=S_{p_B}$ of primes over the interval $[p_{-1}, p_B]$ (by defining $\frobp$ arbitrarily for every $\ip$ over some prime in $[\log B/2,p_B]$).
\medskip
 
Next, use proposition \ref{representants} with $x=p_B$, the set $S_{p_B}$ and the Frobenius data $\mathcal{F}_{p_B}$. Note that the upper bound for $H(t_0)$ in proposition \ref{representants} (3) is $\leq p_B \Pi(p_B)$ and so $\leq B$. Conclude from proposition \ref{representants} (2) that the number $\mathcal{N}$ of $t_0\in O_K$ such that ${\rm Gal}(F_{t_0}/K)=G$, $H(t_0) \leq B$ and for all $\ip \in S_B$, $\rm{Frob}_{\ip}(F_{t_0}/K) \in \frobp$ satisfies 
$$\mathcal{N}\geq \displaystyle \chi(\mathcal{F}_{p_B}) \times \frac{\Pi({p_B})^{\dd}}{(\Pi(p_{-1}))^{\dd}} \times (\frac{1}{2r|G|})^{\dd \pi(p_B)}.$$

\noindent Furthermore, we have $$\chi(\mathcal{F}_{p_B})=\displaystyle \prod_{\ip \in S_{p_B}} \frac{|\frobp|}{|G|} \geq \frac{1}{|G|^{|S_{p_B}|}} \geq \frac{1}{|G|^{\dd \pi(p_B)}}.$$ 
and
\begin{itemize}
\item $\displaystyle \Pi(p_B)^{\dd} = \frac{(4 p_B^2 \Pi(p_B))^{\dd}}{(2p_B)^{2 \dd}} \geq \frac{B^{\dd}}{(2p_B)^{2 \dd}}$,
\item $(2p_B)^{2 \dd} \leq c_1^{\log B /\log \log B}$ for a constant $c_1$ depending on $F/K(T)$.
\end{itemize}

Finally, using that $\pi(p_B) \leq 2 \dd \log B / \log \log B$, we obtain
\begin{align*}
\mathcal{N}\geq \frac{B^{\dd}}{c^{\log B / \log \log B}}
\end{align*}
for a constant $c$ depending on $F/K(T)$.
\qed

\begin{theorem} \label{selftwistedcover}
Let $F/K(T)$ be a regular Galois extension of group $G$. There exists an integer $N \leq |Aut(G)|$ and some polynomials $\widetilde{P}_1,\cdots,\widetilde{P}_N \in O_K[U,T,Y]$, irreducible in $\overline{K(U)}(T)[Y]$, of degree $\deg_Y(\widetilde{P}_i)=|G|$, monic in $Y$ and a finite set $\varepsilon \subset K$ such that the following holds : 
\begin{itemize}
\item all the affine curves $\widetilde{P_i}(U,t,y)=0$ (over $\overline{K(U)}$) are of the same genus $g_F$,
\item for every $u_0 \in O_K\backslash \varepsilon$, $\widetilde{P_i}(u_0,T,Y)$ is irreducible in $\overline{K}(T)[Y]$ and the affine curve $\widetilde{P_i}(u_0,t,y)$ is of genus $g_F$, $i=1,\cdots,N$,
\item for every $t_0 \in O_K$ which is not a branch point of $F/K(T)$, 
$$F_{t_0}/K = F_{u_0}/K \iff \exists i \in\{1,\cdots,N\}, \exists y_0 \in K : \widetilde{P}_i(u_0,t_0,y_0)=0.$$
\end{itemize}
\end{theorem}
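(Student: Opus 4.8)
The plan is to adapt to an arbitrary number field $K$ the \emph{self-twisted cover} construction carried out over $\Q$ in \cite{debconjmalle} (see also \cite{debgazi}): comparing two specializations $F_{t_0}/K$ and $F_{u_0}/K$ is controlled by the $K$-rational points lying above $t_0$ of a twisted cover of $f:X\to\PP$ whose twisting extension is $F_{u_0}/K$; letting $u_0$ become a generic parameter $U$ packages these into a single cover over $K(U)$. First I would fix an affine model $P(T,Y)\in O_K[T,Y]$ of $F/K(T)$, monic in $Y$ with $\deg_Y P=|G|$, and let $P(U,Y)$ be the same polynomial read in the variables $U,Y$. Because $F/K(T)$ is $K$-regular Galois of group $G$, the extension $\mathcal F_U:=K(U)[Y]/(P(U,Y))$ is a $K$-regular Galois extension of $K(U)$ of group $G$ (a faithful copy of $F/K(T)$), and it is the extension whose specialization at $U\mapsto u_0$ is the splitting field $F_{u_0}/K$ of $P(u_0,Y)$ for every $u_0\in K$ with $\Delta_P(u_0)\neq0$. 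Fix an isomorphism $\mathrm{Gal}(\mathcal F_U/K(U))\simeq G$.

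Next I would form the twisted covers. Base changing the constant field of $F/K(T)$ gives the $K(U)$-regular $G$-Galois extension $FK(U)/K(U)(T)$; as a constant-field extension, $\mathcal F_U K(U)(T)$ is linearly disjoint from $FK(U)$ over $K(U)(T)$, so their compositum $\mathcal C$ is $(G\times G)$-Galois over $K(U)(T)$ with $\mathrm{Gal}(\mathcal C/K(U)(T))=G\times G$. For $\varphi\in\mathrm{Aut}(G)$ put $\Delta_\varphi G=\{(g,\varphi(g)):g\in G\}\le G\times G$ and let $\widetilde F_\varphi=\mathcal C^{\Delta_\varphi G}$; by the Galois correspondence $[\widetilde F_\varphi:K(U)(T)]=|G|$, and since the field of constants of $\mathcal C$ is $\mathcal F_U$ with the second $G$ acting, that of $\widetilde F_\varphi$ is $(\mathcal F_U)^{\varphi(G)}=(\mathcal F_U)^G=K(U)$, so $\widetilde F_\varphi/K(U)(T)$ is $K(U)$-regular. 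Geometrically, $\widetilde F_\varphi$ is the function field of the twisted cover $\widetilde X_\varphi\to\PP_T$ of $FK(U)/K(U)(T)$ by $\mathcal F_U$ (the $G$-action on $\mathcal F_U$ modified by $\varphi$). Choosing a primitive element of $\widetilde F_\varphi/K(U)(T)$ integral over the normal ring $O_K[U,T]$ yields $\widetilde P_\varphi\in O_K[U,T,Y]$, monic in $Y$ with $\deg_Y\widetilde P_\varphi=|G|$; taking the pairwise distinct ones among the $\widetilde P_\varphi$, $\varphi\in\mathrm{Aut}(G)$, defines $\widetilde P_1,\dots,\widetilde P_N$ with $N\le|\mathrm{Aut}(G)|$.

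The $K(U)$-regularity just proved says each $\widetilde P_\varphi$ is irreducible in $\overline{K(U)}(T)[Y]$; and as $\mathcal F_U$ splits over $\overline{K(U)}$, $\widetilde X_\varphi$ becomes isomorphic to $X$ over $\overline{K(U)}$, so all the affine curves $\widetilde P_\varphi(U,t,y)=0$ have the common genus $g_F:=g$, the genus of $X$. I would then let $\varepsilon\subset K$ be the finite set made of the branch points of $F/K(T)$, the zeros of $\Delta_P$, and the finitely many $u_0$ at which some discriminant or leading coefficient entering the construction of the $\widetilde P_i$ vanishes. For $u_0\in O_K\setminus\varepsilon$ one then checks that $\widetilde P_i(u_0,T,Y)$ is an affine model of the twisted cover of $F/K(T)$ by $F_{u_0}/K$ with the twist $\varphi_i$, still geometrically irreducible of genus $g_F$ (again a twisted form of $X$). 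Proving that a \emph{single} finite $\varepsilon$ works uniformly — i.e. that forming $\widetilde F_\varphi$ and its integral model commutes with the specialization $U\mapsto u_0$ and is well behaved at the primes of $O_K$ — is the technical core, where the $\Q$-argument of \cite{debconjmalle} must be transcribed with the primes of $O_K$ in place of those of $\Z$; I expect this to be the main obstacle.

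Finally I would establish the equivalence. Fix $u_0\in O_K\setminus\varepsilon$ and $t_0\in O_K$ not a branch point. By the defining property of a twisted cover, for each $i$ there exists $y_0\in K$ with $\widetilde P_i(u_0,t_0,y_0)=0$ — a $K$-rational point of $\widetilde X_{\varphi_i}$ above $(u_0,t_0)$ — if and only if $F_{t_0}/K$ and $F_{u_0}/K$ are isomorphic as $\varphi_i$-twisted $G$-Galois $K$-algebras. On the other hand every $K$-isomorphism $F_{t_0}\to F_{u_0}$ is $\varphi$-twisted $G$-equivariant for exactly one $\varphi\in\mathrm{Aut}(G)$, and if two automorphisms give the same polynomial $\widetilde P_\varphi=\widetilde P_{\varphi'}$ (hence the same curve) the two twisted-isomorphy conditions coincide; so $F_{t_0}/K$ is $K$-isomorphic to $F_{u_0}/K$ if and only if it is $\varphi_i$-twisted isomorphic to it for some $i\in\{1,\dots,N\}$. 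Since both extensions are Galois over $K$, being $K$-isomorphic is the same as being equal inside $\overline K$, and combining the two equivalences gives
\[
F_{t_0}/K=F_{u_0}/K\iff \exists\,i\in\{1,\dots,N\},\ \exists\,y_0\in K:\ \widetilde P_i(u_0,t_0,y_0)=0,
\]
which is the assertion.
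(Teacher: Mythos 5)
The paper does not prove this theorem directly: it derives it as the specialisation $L/K(T)=F/K(T)$ of \cite[Theorem 2.16]{noparametric}, taking each $\widetilde P_i$ to be an affine model of the $K(U)$-regular cover $\widetilde f_i:\widetilde X_i\to\PP_{K(U)}$ built there by twisting $f$ by itself. Your proposal reconstructs that self-twist from scratch, and the ingredients you assemble — the base change $FK(U)/K(U)(T)$, the constant extension $\mathcal F_U/K(U)$ carrying a second copy of $F$, their $(G\times G)$-Galois compositum $\mathcal C$, and the diagonal fixed fields $\widetilde F_\varphi=\mathcal C^{\Delta_\varphi G}$ for $\varphi\in\mathrm{Aut}(G)$ — are exactly the standard twisted-cover machinery, and your regularity, degree and genus checks are correct. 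So the outline is the right one, and it is consistent with (rather than an alternative to) the cited construction.

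That said, the proposal is a blueprint with two unfilled slots. You concede the first yourself: you do not prove that a single finite $\varepsilon$ makes the second bullet true, i.e. that $\widetilde P_i(u_0,T,Y)$ remains an affine model of the $\varphi_i$-twist of $F/K(T)$ by $F_{u_0}/K$, geometrically irreducible of genus $g_F$, for every $u_0\in O_K\setminus\varepsilon$. This commutation of twist and integral model with the specialisation $U\mapsto u_0$ is precisely what the paper outsources to \cite{noparametric}; writing \emph{I expect this to be the main obstacle} does not discharge it (though it is a routine spreading-out argument over $\mathbb{A}^1_K$: the degenerate $u_0$ form a proper closed, hence finite, subset cut out by $\Delta_P(U)$ and suitable $U$-discriminants or $T$-resultants of $\widetilde P_i$ and $\partial\widetilde P_i/\partial Y$). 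The second slot is in the closing equivalence. The isomorphism $\mathrm{Gal}(F_{t_0}/K)\simeq G$ attached to a specialization is canonical only up to conjugacy, so the $\varphi$ mediating a $K$-isomorphism $F_{t_0}\to F_{u_0}$ is well-defined only modulo $\mathrm{Inn}(G)$, not \emph{exactly one $\varphi\in\mathrm{Aut}(G)$} as you write; one must add that $(1,h)$ conjugates $\Delta_\varphi G$ to $\Delta_{\mathrm{int}_h\circ\varphi}G$ inside $G\times G$, so inner twists give $K(U)(T)$-isomorphic $\widetilde F_\varphi$ and hence identical rational-point conditions above $(u_0,t_0)$. That remark is what legitimises ranging over (representatives of) $\mathrm{Aut}(G)$ and justifies $N\leq|\mathrm{Aut}(G)|$; without it the bookkeeping in your last step does not quite close.
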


This result is the special case of \cite[theorem 2.16]{noparametric} for which $F/K(T)= L/K(T)$. Each polynomial $\widetilde{P}_i$ is an affine model of the $K(U)$-regular cover $\widetilde{f}_i : \widetilde{X}_i \rightarrow \PP_{K(U)}$ that appears there and is obtained somehow by twisting $f_i$ by itself ($i=1,\cdots,N$). Except for a finite number of them, the $K$-points on $\widetilde{X}_i\vert_{u_0}$ that appear in \cite{noparametric} correspond to the zeroes $(t_0,y_0)$ of the polynomial $\widetilde{P}_i(u_0,T,Y)$, $i=1,\cdots N$.
\medskip

\textbf{Diophantine estimates.}
The constants $c_i$ below, $i=1,2,3$ depend only on the extension $F/K(T)$. We have :
\begin{itemize}
\item $\deg(\widetilde{P}(u_0,T,Y)) \leq \deg(\widetilde{P})=c_1$
\item $\deg_Y(\widetilde{P}(u_0,T,Y)) = \deg_Y(\widetilde{P})=|G|$
\item $H(\widetilde{P}(u_0,T,Y)) \leq c_2 H(u_0)^{c_3} \leq c_2 B^{c_3}$ \\
\end{itemize}

For real numbers $g,D,H,B\geq 0$ and $d_Y \geq 2$, consider all polynomials $Q\in O_K[T,Y]$ monic in $Y$ and irreducible in $\overline{K}(T)[Y]$ such that 
\begin{itemize}
\item $\deg_Y(Q)=d_Y$
\item $\deg(Q) \leq D$
\item $H(Q) \leq H$
\item the curve $Q(t,y)=0$ is of genus $\leq g$.
\end{itemize}

\noindent For each such polynomial $Q$, the number of $t \in O_K$ of height $H(t) \leq B$ and such that $Q(t,y)=0$ for some $y\in O_K$ is finite. Denote by $Z(g,D,d_Y,H,B)$ the maximal cardinality of all these finite sets when $Q$ ranges over all polynomials satisfying the above conditions.
\smallskip

As in theorem \ref{HilbertMalle}, let $B$ be a positive number and $\mathcal{H} \subset O_K$ be a subset consisting of $t_0$ such that ${\rm Gal}(F_{t_0}/K)=G$ and $H(t_0) \leq B$. From theorem \ref{selftwistedcover}, for every $u_0\in \mathcal{H}$, the number of $t_0\in \mathcal{H}$ such that $F_{t_0}/K=F_{u_0}/K$ is $\leq N \hskip 2pt Z(g_F,c_1,|G|,c_2B^{c_3},B)$. Let $E$ be the cardinality of $\varepsilon$ of theorem \ref{selftwistedcover}, we obtain 
$$\mathcal{N}(B,\mathcal{H}) \geq \frac{|\mathcal{H}|-E}{N Z(g_F,c_1,|G|,c_2B^{c_3},B)}.$$

From corollary C, we have
$$Z(g_F,c_1,|G|,c_2B^{c_3},B) \leq c_5B^{\dd/|G|}(\log B)^{c_6}.$$
Finally we obtain
$$\mathcal{N}(\mathcal{H}) \geq \frac{|\mathcal{H}|-E}{B^{\dd/|G|}(\log B)^{\gamma}}.$$

\newpage
\selectbiblanguage{british}
\bibliographystyle{alpha}
\bibliography{bibliographie}
\thispagestyle{empty}

\end{document}